\documentclass{amsart}

\sloppy

\usepackage{amsfonts}
\usepackage{amsmath}
\usepackage{amssymb}
\usepackage{latexsym}
\usepackage{amscd}
\usepackage{amsthm}
\usepackage{subfigure}
\usepackage{graphicx}
\usepackage[all]{xy}
\usepackage{color}

\newtheorem{thm}{Theorem}[section]
\newtheorem{prop}[thm]{Proposition}
\newtheorem{lem}[thm]{Lemma}

\theoremstyle{definition}

\newtheorem{rem}[thm]{Remark}

\newcommand{\M}{\mathcal{M}}
\newcommand{\PM}{\mathcal{PM}}

\begin{document}

\title{Infinite presentations for fundamental groups of surfaces}
\thanks{2020 \textit{Mathematics Subject Classification}. Primary 57M05; Secondary 20F05, 57M07, 20F65.}
%\subjclass{Primary 57M05; Secondary 20F05, 57M07, 20F65}
\keywords{Fundamental group, presentation, mapping class group}
\author[R. Kobayashi]{Ryoma Kobayashi}
\address{Department of General Education\\
National Institute of Technology, Ishikawa College\\
Tsubata Ishikawa 929-0392 JAPAN}
\email{kobayashi\_ryoma@ishikawa-nct.ac.jp}
\thanks{The author is supported by JSPS KAKENHI Grant Number JP19K14542 and JP22K13920}

\maketitle

%%%%%%%%%%%%%%%%%%%%%%%%%%%%%%%%%%%%%%%%%%%%%%%%%%%%%%%%%%%%%%%%%%%%%%%%%%%%%%%%%%%%%%%%%%%%%%%%%%%%
%%%%%%%%%%%%%%%%%%%%%%%%%%%%%%%%%%%%%%%%%%%%%%%%%%%%%%%%%%%%%%%%%%%%%%%%%%%%%%%%%%%%%%%%%%%%%%%%%%%%
%%%%%%%%%%%%%%%%%%%%%%%%%%%%%%%%%%%%%%%%%%%%%%%%%%%%%%%%%%%%%%%%%%%%%%%%%%%%%%%%%%%%%%%%%%%%%%%%%%%%
%%%%%%%%%%%%%%%%%%%%%%%%%%%%%%%%%%%%%%%%%%%%%%%%%%%%%%%%%%%%%%%%%%%%%%%%%%%%%%%%%%%%%%%%%%%%%%%%%%%%
%%%%%%%%%%%%%%%%%%%%%%%%%%%%%%%%%%%%%%%%%%%%%%%%%%%%%%%%%%%%%%%%%%%%%%%%%%%%%%%%%%%%%%%%%%%%%%%%%%%%
%%%%%%%%%%%%%%%%%%%%%%%%%%%%%%%%%%%%%%%%%%%%%%%%%%%%%%%%%%%%%%%%%%%%%%%%%%%%%%%%%%%%%%%%%%%%%%%%%%%%
%%%%%%%%%%%%%%%%%%%%%%%%%%%%%%%%%%%%%%%%%%%%%%%%%%%%%%%%%%%%%%%%%%%%%%%%%%%%%%%%%%%%%%%%%%%%%%%%%%%%
%%%%%%%%%%%%%%%%%%%%%%%%%%%%%%%%%%%%%%%%%%%%%%%%%%%%%%%%%%%%%%%%%%%%%%%%%%%%%%%%%%%%%%%%%%%%%%%%%%%%
%%%%%%%%%%%%%%%%%%%%%%%%%%%%%%%%%%%%%%%%%%%%%%%%%%%%%%%%%%%%%%%%%%%%%%%%%%%%%%%%%%%%%%%%%%%%%%%%%%%%
%%%%%%%%%%%%%%%%%%%%%%%%%%%%%%%%%%%%%%%%%%%%%%%%%%%%%%%%%%%%%%%%%%%%%%%%%%%%%%%%%%%%%%%%%%%%%%%%%%%%
\begin{abstract}
For any finite type connected surface $S$, we give an infinite presentation of the fundamental group $\pi_1(S,\ast)$ of $S$ based at an interior point $\ast\in{S}$ whose generators are represented by simple loops.
When $S$ is non-orientable, we also give an infinite presentation of the subgroup of $\pi_1(S,\ast)$ generated by elements which are represented by simple loops whose regular neighborhoods are annuli.
\end{abstract}

%%%%%%%%%%%%%%%%%%%%%%%%%%%%%%%%%%%%%%%%%%%%%%%%%%%%%%%%%%%%%%%%%%%%%%%%%%%%%%%%%%%%%%%%%%%%%%%%%%%%
%%%%%%%%%%%%%%%%%%%%%%%%%%%%%%%%%%%%%%%%%%%%%%%%%%%%%%%%%%%%%%%%%%%%%%%%%%%%%%%%%%%%%%%%%%%%%%%%%%%%
%%%%%%%%%%%%%%%%%%%%%%%%%%%%%%%%%%%%%%%%%%%%%%%%%%%%%%%%%%%%%%%%%%%%%%%%%%%%%%%%%%%%%%%%%%%%%%%%%%%%
%%%%%%%%%%%%%%%%%%%%%%%%%%%%%%%%%%%%%%%%%%%%%%%%%%%%%%%%%%%%%%%%%%%%%%%%%%%%%%%%%%%%%%%%%%%%%%%%%%%%
%%%%%%%%%%%%%%%%%%%%%%%%%%%%%%%%%%%%%%%%%%%%%%%%%%%%%%%%%%%%%%%%%%%%%%%%%%%%%%%%%%%%%%%%%%%%%%%%%%%%
%%%%%%%%%%%%%%%%%%%%%%%%%%%%%%%%%%%%%%%%%%%%%%%%%%%%%%%%%%%%%%%%%%%%%%%%%%%%%%%%%%%%%%%%%%%%%%%%%%%%
%%%%%%%%%%%%%%%%%%%%%%%%%%%%%%%%%%%%%%%%%%%%%%%%%%%%%%%%%%%%%%%%%%%%%%%%%%%%%%%%%%%%%%%%%%%%%%%%%%%%
%%%%%%%%%%%%%%%%%%%%%%%%%%%%%%%%%%%%%%%%%%%%%%%%%%%%%%%%%%%%%%%%%%%%%%%%%%%%%%%%%%%%%%%%%%%%%%%%%%%%
%%%%%%%%%%%%%%%%%%%%%%%%%%%%%%%%%%%%%%%%%%%%%%%%%%%%%%%%%%%%%%%%%%%%%%%%%%%%%%%%%%%%%%%%%%%%%%%%%%%%
%%%%%%%%%%%%%%%%%%%%%%%%%%%%%%%%%%%%%%%%%%%%%%%%%%%%%%%%%%%%%%%%%%%%%%%%%%%%%%%%%%%%%%%%%%%%%%%%%%%%
\section{Introduction}

For any surface $S$ and any point $\ast$ in the interior of $S$, let $\pi_1(S,\ast)$ denote the fundamental group of $S$ based at $\ast$.
When $S$ is non-orientable, we denote by $\pi_1^+(S,\ast)$ the subgroup of $\pi_1(S,\ast)$ generated by elements which are represented by simple loops whose regular neighborhoods are annuli, called \textit{two-sided simple loops}.
A presentation of $\pi_1(S,\ast)$ is well known.
In particular,  $\pi_1(S,\ast)$, and also $\pi_1^+(S,\ast)$, are free groups if $S$ has a boundary.
For a connected closed orientable surface $S$, Putman \cite{P} gave an infinite presentation of $\pi_1(S,\ast)$.
In this paper we give infinite presentations of $\pi_1(S,\ast)$ and $\pi_1^+(S,\ast)$ whose generators are represented by simple loops, for any finite type connected surface $S$, as follows.

%%%%%%%%%%%%%%%%%%%%%%%%%%%%%%%%%%%%%%%%%%%%%%%%%%%%%%%%%%%%%%%%%%%%%%%%%%%%%%%%%%%%%%%%%%%%%%%%%%%%
%%%%%%%%%%%%%%%%%%%%%%%%%%%%%%%%%%%%%%%%%%%%%%%%%%%%%%%%%%%%%%%%%%%%%%%%%%%%%%%%%%%%%%%%%%%%%%%%%%%%
\begin{thm}\label{main-1}
For any finite type connected surface $S$, let $\pi$ be the group generated by symbols $S_\alpha$ for $\alpha\in\pi_1(S,\ast)$ which is represented by a non-trivial simple loop, and with the defining relations
\begin{enumerate}
\item	$S_{\alpha^{-1}}=S_\alpha^{-1}$,
\item	$S_{\alpha}S_{\beta}=S_{\gamma}$ if $\alpha\beta=\gamma$.
\end{enumerate}
Then $\pi$ is isomorphic to $\pi_1(S,\ast)$.
\end{thm}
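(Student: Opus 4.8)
The plan is to exhibit mutually inverse homomorphisms between $\pi$ and $\pi_1(S,\ast)$. One direction is immediate: relations (1) and (2) manifestly hold in $\pi_1(S,\ast)$, so the assignment $S_\alpha\mapsto\alpha$ extends to a homomorphism $\phi\colon\pi\to\pi_1(S,\ast)$. It is surjective because $\pi_1(S,\ast)$ is generated by the classes of non-trivial simple loops based at $\ast$; for instance, the generators in a standard presentation of a surface group are represented by such loops. (If $S$ is a sphere, both groups are trivial and there is nothing to prove, so assume otherwise.)

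The substance is constructing a section $\psi\colon\pi_1(S,\ast)\to\pi$. I would first fix a standard finite presentation $\langle x_1,\dots,x_n\mid r_1,\dots,r_m\rangle$ of $\pi_1(S,\ast)$ in which every generator $x_i$ is the class of a simple loop — the usual presentations of orientable and non-orientable surface groups, with or without boundary, have this form, the single relator (when $\partial S=\varnothing$) being $\prod[a_i,b_i]$ or $a_1^2\cdots a_k^2$, and there being no relator when $\partial S\neq\varnothing$. Sending $x_i\mapsto S_{x_i}$ defines a homomorphism from the free group $F_n$ to $\pi$; to see that it descends to $\psi$ one must check that each relator $r_j$ maps to $1$ in $\pi$. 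Granting this, $\phi\circ\psi=\mathrm{id}$ on the generators $x_i$, and it remains to prove $\psi\circ\phi=\mathrm{id}_\pi$, i.e.\ that $\psi(\alpha)=S_\alpha$ for every non-trivial simple loop $\alpha$; equivalently, that if $\alpha=W(x_1,\dots,x_n)$ in $\pi_1(S,\ast)$ then $W(S_{x_1},\dots,S_{x_n})=S_\alpha$ in $\pi$.

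Both remaining points are instances of a single statement: every equality of products of non-trivial-simple-loop classes in $\pi_1(S,\ast)$ is a consequence of relations (1) and (2). I would prove this by realizing the reduction on the surface. Given a word $x_{i_1}^{\varepsilon_1}\cdots x_{i_k}^{\varepsilon_k}$ representing a simple loop $\alpha$, the goal is to choose representing loops and, using the relations already in hand to reorder and re-bracket, to arrange that every partial product is again represented by a non-trivial simple loop or is trivial; then relation (2), together with (1) for the trivial partial products and for the letters with $\varepsilon_i=-1$ (and for the cancellation $S_\beta S_{\beta^{-1}}=1$), collapses the word to $S_\alpha$ one letter at a time. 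The relators $r_j$ are treated identically, collapsing to the empty word. A convenient reduction is to cut $S$ along a simple arc from $\ast$ to $\partial S$ (removing an interior point first if $\partial S=\varnothing$), so that $\pi_1(S,\ast)$ becomes free: then the relators disappear and only the identity $\psi(\alpha)=S_\alpha$ remains, which can be attacked by induction on the geometric intersection number of a representative of $\alpha$ with an arc system dual to the $x_i$, using the change-of-coordinates principle for simple loops to keep the intermediate curves embedded.

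The main obstacle is exactly this last lemma. Relation (2) can be applied only when the two factors \emph{and} their product are simultaneously represented by simple loops, so the difficulty is to maneuver an arbitrary word representing a simple loop into a form in which all partial products have this property; this is not a purely group-theoretic manipulation but requires a careful surgery argument on $S$ — banding and sliding arcs while controlling embeddedness of the partial-product curves, and invoking change of coordinates to recognize the needed curves as simple. Once that lemma is established, $\phi$ and $\psi$ are mutually inverse and the theorem follows.
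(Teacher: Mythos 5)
Your easy direction is fine and coincides with how the paper concludes: relations (1) and (2) visibly hold in $\pi_1(S,\ast)$, the natural map $\pi\to\pi_1(S,\ast)$ is onto because the standard generators are simple, and the closed-surface relator is an instance of relation (2), so the inverse map is well defined on generators. But the entire content of the theorem is the claim you isolate and then leave unproved: that whenever a non-trivial simple loop $\alpha$ equals a word $W(x_1,\dots,x_n)$ in the standard generators, the identity $S_\alpha=W(S_{x_1},\dots,S_{x_n})$ already holds in $\pi$ (equivalently, that the symbols of the standard generators generate $\pi$). You yourself call this ``the main obstacle'' and offer only an intended induction on geometric intersection number with a dual arc system, with ``banding and sliding'' to keep partial products embedded. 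That is a restatement of the difficulty, not a proof: relation (2) applies only when both factors \emph{and} their product are simple, the partial products of a word representing a simple loop are generally not simple, and nothing in the sketch explains how the surgery systematically produces a factorization all of whose partial products are simple, nor why the process terminates. The proposed reduction to the free case by cutting along an arc (or first puncturing a closed surface) also does not work as stated: an equality $\alpha\beta=\gamma$ in $\pi_1(S,\ast)$ does not descend to the cut or punctured surface (the classes there depend on the representatives), so the defining relations of $\pi$ for $S$ are not reflected in the cut surface, and the group $\pi$ itself changes.

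For comparison, the paper closes exactly this gap by a mechanism your proposal does not contain: it lets the pure mapping class group $\PM(\Sigma_{g,n+1})$ act on $\pi$ and invokes Lemma~\ref{main-lem}. Concretely, Proposition~\ref{1} shows (i) the symbols $S_\alpha$ with $\alpha$ non-separating or bounding a boundary component generate $\pi$, using relations (1)--(2) on explicit normal positions of separating loops; (ii) this set is the $\PM$-orbit of the finite set $X^\prime$ of standard generators, by change of coordinates; and (iii) for each Dehn-twist generator $y$ of $\PM(\Sigma_{g,n+1})$ and each $x\in X^\prime$, the element $y^{\pm1}(x)$ lies in $\langle X^\prime\rangle$, verified by a finite list of explicit computations. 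Lemma~\ref{main-lem} then yields that $X^\prime$ generates $\pi$, which is precisely the statement your induction was meant to deliver, and the two-way homomorphism argument finishes as in your first paragraph. To make your proposal into a proof you would either have to carry out the intersection-number/surgery induction in full (controlling embeddedness of all partial products), or adopt a device of this mapping-class-group type; as written, the key lemma is missing.
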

%%%%%%%%%%%%%%%%%%%%%%%%%%%%%%%%%%%%%%%%%%%%%%%%%%%%%%%%%%%%%%%%%%%%%%%%%%%%%%%%%%%%%%%%%%%%%%%%%%%%
%%%%%%%%%%%%%%%%%%%%%%%%%%%%%%%%%%%%%%%%%%%%%%%%%%%%%%%%%%%%%%%%%%%%%%%%%%%%%%%%%%%%%%%%%%%%%%%%%%%%

%%%%%%%%%%%%%%%%%%%%%%%%%%%%%%%%%%%%%%%%%%%%%%%%%%%%%%%%%%%%%%%%%%%%%%%%%%%%%%%%%%%%%%%%%%%%%%%%%%%%
%%%%%%%%%%%%%%%%%%%%%%%%%%%%%%%%%%%%%%%%%%%%%%%%%%%%%%%%%%%%%%%%%%%%%%%%%%%%%%%%%%%%%%%%%%%%%%%%%%%%
\begin{thm}\label{main-2}
For any finite type connected non-orientable surface $S$, let $\pi^+$ be the group generated by symbols $S_\alpha$ for $\alpha\in\pi_1^+(S,\ast)$ which is represented by a non-trivial simple loop, and with the defining relations
\begin{enumerate}
\item	$S_{\alpha^{-1}}=S_\alpha^{-1}$,
\item	$S_{\alpha}S_{\beta}=S_{\gamma}$ if $\alpha\beta=\gamma$,
\item	$S_{\alpha}S_{\beta}S_{\alpha}^{-1}=S_\gamma$ if $\alpha\beta\alpha^{-1}=\gamma$.
\end{enumerate}
Then $\pi^+$ is isomorphic to $\pi_1^+(S,\ast)$.
\end{thm}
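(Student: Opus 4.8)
The plan is to prove that the obvious homomorphism is an isomorphism. Sending each generator $S_\alpha$ to the class $\alpha$ plainly respects relations (1)--(3), so it defines a homomorphism $\Phi\colon\pi^+\to\pi_1^+(S,\ast)$, and $\Phi$ is surjective because, by definition, $\pi_1^+(S,\ast)$ is generated by the classes of two-sided simple loops. Everything thus reduces to the injectivity of $\Phi$. Since $\alpha^{-1}$ is again a non-trivial two-sided simple loop whenever $\alpha$ is, relation (1) lets us rewrite any element of $\pi^+$ as a positive word $S_{\alpha_1}\cdots S_{\alpha_n}$ in the generators; so injectivity is equivalent to the implication
\[
\alpha_1\cdots\alpha_n=1 \ \text{in } \pi_1(S)\ \Longrightarrow\ S_{\alpha_1}\cdots S_{\alpha_n}=1 \ \text{in } \pi^+,
\]
for non-trivial two-sided simple loops $\alpha_1,\dots,\alpha_n$. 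The cases $n\le 2$ are immediate ($n=1$ cannot occur since $\alpha_1\ne 1$; for $n=2$ one has $\alpha_2=\alpha_1^{-1}$, so $S_{\alpha_2}=S_{\alpha_1}^{-1}$ by (1)), so what is needed is an induction stripping off one factor at a time.

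I would run this induction on a surface with boundary, where $\pi_1^+(S,\ast)$ is free (as recalled in the introduction), and deduce the closed case afterwards --- either by deleting an interior point $p\ne\ast$ and comparing $\pi^+$ and $\pi_1^+$ before and after, the discrepancy being controlled by the loop around $p$, or directly from the standard finite presentation of $\pi_1^+(S,\ast)$, each defining relator being shown to die in $\pi^+$ by an explicit use of relations (1)--(3), relation (3) absorbing the conjugations in the surface relator. So assume $\partial S\ne\emptyset$, and fix a free basis $g_1,\dots,g_r$ of $\pi_1^+(S,\ast)$ each represented by a two-sided simple loop; such a basis exists, as one checks by presenting $S$ as a disc with bands and cross-caps. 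Put $G=\langle S_{g_1},\dots,S_{g_r}\rangle\le\pi^+$.

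The heart of the argument is the \emph{geometric claim} that $S_\beta\in G$ for every non-trivial two-sided simple loop $\beta$. Granting it, $\pi^+=G$ is generated by $r$ elements, so there is a surjection $\rho\colon F_r\twoheadrightarrow\pi^+$ carrying the $i$-th free generator to $S_{g_i}$; but then $\Phi\circ\rho\colon F_r\to\pi_1^+(S,\ast)$ carries the $i$-th free generator to $g_i$ and is therefore an isomorphism, which forces $\rho$ to be injective, hence an isomorphism, whence $\Phi$ is an isomorphism too. To prove the claim I would induct on a complexity of $\beta$ (for instance its reduced word length with respect to a fixed spine of $S$). When $\beta$ decomposes as a concatenation $\mu\nu$ of two strictly shorter loops which can be made disjoint off $\ast$, relation (2) gives $S_\beta=S_\mu S_\nu$, provided one first verifies the auxiliary fact that the concatenation at $\ast$ of two disjoint simple loops is again a simple loop --- inspect the four strands meeting at $\ast$: the two ``corners'' of the concatenated loop occupy disjoint angular sectors, so the loop is embedded near $\ast$, and it is orientation-preserving (hence two-sided) because $\mu$ and $\nu$ are. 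When $\mu$ or $\nu$ is not yet of the required shape, relation (3) is used to replace it by a conjugate that is, the conjugating factor being peeled off to one side. Iterating drives $\beta$ down to a basis element or the trivial loop.

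The main obstacle is exactly this inductive step. One must arrange the successive splittings and conjugations so that \emph{every} intermediate loop, and every conjugate that one is forced to introduce, is genuinely simple and two-sided --- otherwise relations (2) and (3) cannot legally be invoked --- all while keeping the complexity strictly decreasing; this calls for a careful choice of representatives (say, geodesics for a fixed hyperbolic metric, or curves in pairwise minimal position) and of the order in which the moves are applied. This is also where relation (3) is genuinely needed: in the orientable case of Theorem~\ref{main-1} the conjugations that appear can always be rerouted through relation (2), but in the present two-sided setting that route is obstructed, so conjugation must be available as a relation in its own right.
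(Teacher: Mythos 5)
Your overall skeleton agrees with the paper's: reduce everything to showing that the symbols of a distinguished basis of $\pi_1^+(S,\ast)$ already generate $\pi^+$, and then let freeness of $\pi_1^+$ (surface with boundary) force injectivity of the natural map, treating the closed case by checking that the finitely many relators of a presentation of $\pi_1^+$ die in $\pi^+$. But the heart of the matter --- your ``geometric claim'' that $S_\beta$ lies in the subgroup $G$ generated by the basis symbols for \emph{every} class $\beta$ represented by a non-trivial simple loop --- is exactly what you leave unproved, and the induction you sketch is not a workable substitute as stated. Relation (2) may only be invoked when all three classes $\alpha$, $\beta$, $\alpha\beta$ are represented by simple loops lying in $\pi_1^+$, and a given simple loop class need not split at $\ast$ as a product of two strictly shorter such classes; moreover the conjugates introduced via relation (3) can increase whatever complexity you are inducting on, and you give no mechanism guaranteeing termination or simplicity of the intermediate loops. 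You yourself flag this as ``the main obstacle,'' so the proposal stops precisely where the real work begins.

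The paper closes this gap by a different mechanism: Lemma~\ref{main-lem} (Putman's trick) applied with the pure mapping class group $\PM(N_{g,n+1})$ acting on $\pi^+$. Concretely, one takes $X$ to consist of the symbols $S_\alpha$ for finitely many \emph{topological types} of simple loops (non-separating two-sided with non-orientable complement, loops bounding a boundary component, loops bounding one crosscap), shows by a case analysis on the homeomorphism type of the complement that $X$ generates $\pi^+$ (explicit words in relations (1)--(3)), uses the change-of-coordinates principle to get $\PM(N_{g,n+1})(X^\prime)=X$ for the basis set $X^\prime$, and then verifies by finitely many explicit computations that each generator of $\PM(N_{g,n+1})$ (the generating set of Theorem~\ref{gen-PMF}, itself established via the Birman exact sequence) sends $X^\prime$ into $\langle X^\prime\rangle$, with Lemma~\ref{ij} supplying the auxiliary symbols $S_{x_{ij}}$, $S_{x_{ji}}$, $S_{y_n}$, $S_{z_n}$. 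This replaces your open-ended induction on loop complexity by a finite list of checks. Two further ingredients you assume without proof are supplied in the paper: the existence of a free basis of $\pi_1^+(N_{g,n},\ast)$ consisting of simple-loop classes, together with the two explicit relators in the closed case, is obtained by a Reidemeister--Schreier computation; and the required finite generating set of $\PM(N_{g,n+1})$ is proved, not quoted. Without an argument of comparable substance for your inductive step, the proposal does not yet constitute a proof.
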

%%%%%%%%%%%%%%%%%%%%%%%%%%%%%%%%%%%%%%%%%%%%%%%%%%%%%%%%%%%%%%%%%%%%%%%%%%%%%%%%%%%%%%%%%%%%%%%%%%%%
%%%%%%%%%%%%%%%%%%%%%%%%%%%%%%%%%%%%%%%%%%%%%%%%%%%%%%%%%%%%%%%%%%%%%%%%%%%%%%%%%%%%%%%%%%%%%%%%%%%%

These results are useful in studies on the mapping class group of $S$ and its subgroups. 
For example, in \cite{KO1}, Theorem~\ref{main-2} is used to obtain an infinite presentation for the twist subgroup of the mapping class group of a compact non-orientable surface.

In order to prove Theorems~\ref{main-1} and \ref{main-2}, we use the following lemma.

%%%%%%%%%%%%%%%%%%%%%%%%%%%%%%%%%%%%%%%%%%%%%%%%%%%%%%%%%%%%%%%%%%%%%%%%%%%%%%%%%%%%%%%%%%%%%%%%%%%%
%%%%%%%%%%%%%%%%%%%%%%%%%%%%%%%%%%%%%%%%%%%%%%%%%%%%%%%%%%%%%%%%%%%%%%%%%%%%%%%%%%%%%%%%%%%%%%%%%%%%
\begin{lem}[cf. \cite{P}]\label{main-lem}
Let $G$ and $H$ be groups generated by sets $X$ and $Y$, respectively, such that $H$ acts on $G$.
Suppose that $X^\prime\subset{X}$ satisfies the following conditions.
\begin{itemize}
\item	$H(X^\prime)=X$.
\item	For any $x\in{X^\prime}$ and $y\in{Y}$, $y^{\pm1}(x)$ is in the subgroup of $G$ generated by $X^\prime$.
\end{itemize}
Then $X^\prime$ generates $G$.
\end{lem}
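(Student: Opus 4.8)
The plan is to show that the subgroup $G'\subset G$ generated by $X'$ is in fact all of $G$, and since $X$ generates $G$, it suffices to show that every $x\in X$ lies in $G'$. By the first hypothesis $H(X')=X$, so every $x\in X$ has the form $x=h(x')$ for some $h\in H$ and $x'\in X'$. Writing $h$ as a word in the generators $Y$ of $H$, say $h=y_n^{\epsilon_n}\cdots y_1^{\epsilon_1}$ with $y_i\in Y$ and $\epsilon_i=\pm1$, I would prove by induction on the word length $n$ that $h(x')\in G'$ for every $x'\in X'$.

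For the base case $n=0$ we have $h(x')=x'\in X'\subset G'$. For the inductive step, write $h=h'y$ where $y=y_1^{\epsilon_1}$ has length $1$ and $h'$ has length $n-1$. By the second hypothesis, $y(x')=y^{\epsilon_1}(x')$ lies in $G'$, so it can be written as a product $g_1^{\delta_1}\cdots g_k^{\delta_k}$ with each $g_j\in X'$ and $\delta_j=\pm1$. Now apply $h'$: since $H$ acts on $G$ by group automorphisms, $h'$ distributes over products and inverses, giving
\[
h(x')=h'\bigl(y(x')\bigr)=h'(g_1)^{\delta_1}\cdots h'(g_k)^{\delta_k}.
\]
Each $g_j\in X'$, so by the inductive hypothesis applied to the length-$(n-1)$ word $h'$ we get $h'(g_j)\in G'$, hence the displayed product lies in $G'$. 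This completes the induction, so $X=H(X')\subset G'$, and therefore $G'=G$.

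The only genuinely delicate point is the implicit assumption that $H$ acts on $G$ \emph{by automorphisms} — the statement says merely "$H$ acts on $G$", but the argument (distributing an element of $H$ over a product and over inverses of elements of $G$) requires this. In the applications to Theorems~\ref{main-1} and \ref{main-2} the relevant action will be that of a mapping class group (or a subgroup thereof) on a fundamental group, which is indeed by automorphisms, so this is harmless; I would simply state it explicitly. A secondary point worth care is that the second hypothesis is phrased for $x\in X'$ and $y\in Y$ with $y^{\pm1}(x)\in\langle X'\rangle$; in the induction I peel generators off $h$ from the side nearest $x'$, so the element being acted on at each stage is always $h'(g_j)$ with $g_j\in X'$ — one must check the bookkeeping keeps the "innermost" argument inside $X'$ at the moment a single generator $y\in Y$ is applied, which the decomposition $h=h'y$ (rather than $h=yh'$) arranges automatically.
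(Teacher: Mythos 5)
Your proof is correct and is exactly the standard induction-on-word-length argument behind this lemma; the paper itself does not prove it (it simply cites Putman's \cite{P}, where the analogous lemma is established by the same induction), so there is nothing in the paper your argument diverges from. Your side remark is also well taken: the action of $H$ on $G$ must be by automorphisms for $h'$ to distribute over the word $g_1^{\delta_1}\cdots g_k^{\delta_k}$, and in the paper's applications the pure mapping class group does act on $\pi$ and $\pi^+$ by automorphisms, so the hypothesis is satisfied.
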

%%%%%%%%%%%%%%%%%%%%%%%%%%%%%%%%%%%%%%%%%%%%%%%%%%%%%%%%%%%%%%%%%%%%%%%%%%%%%%%%%%%%%%%%%%%%%%%%%%%%
%%%%%%%%%%%%%%%%%%%%%%%%%%%%%%%%%%%%%%%%%%%%%%%%%%%%%%%%%%%%%%%%%%%%%%%%%%%%%%%%%%%%%%%%%%%%%%%%%%%%

As groups acting on $\pi$ and $\pi^+$, we consider the \textit{pure mapping class group} of $S$.
Using this lemma, we show that $\pi$ and $\pi^+$ are generated by symbols corresponding to basic generators of $\pi_1(S,\ast)$ and $\pi_1^+(S,\ast)$, respectively.

In Section~\ref{mcg}, we define mapping class groups and pure mapping class groups of surfaces, and explain their generators.
In Sections~\ref{pi} and \ref{pi^+}, we prove Theorems~\ref{main-1} and \ref{main-2}, respectively.

Throughout this paper, we do not distinguish a loop from its homotopy class.

%%%%%%%%%%%%%%%%%%%%%%%%%%%%%%%%%%%%%%%%%%%%%%%%%%%%%%%%%%%%%%%%%%%%%%%%%%%%%%%%%%%%%%%%%%%%%%%%%%%%
%%%%%%%%%%%%%%%%%%%%%%%%%%%%%%%%%%%%%%%%%%%%%%%%%%%%%%%%%%%%%%%%%%%%%%%%%%%%%%%%%%%%%%%%%%%%%%%%%%%%
%%%%%%%%%%%%%%%%%%%%%%%%%%%%%%%%%%%%%%%%%%%%%%%%%%%%%%%%%%%%%%%%%%%%%%%%%%%%%%%%%%%%%%%%%%%%%%%%%%%%
%%%%%%%%%%%%%%%%%%%%%%%%%%%%%%%%%%%%%%%%%%%%%%%%%%%%%%%%%%%%%%%%%%%%%%%%%%%%%%%%%%%%%%%%%%%%%%%%%%%%
%%%%%%%%%%%%%%%%%%%%%%%%%%%%%%%%%%%%%%%%%%%%%%%%%%%%%%%%%%%%%%%%%%%%%%%%%%%%%%%%%%%%%%%%%%%%%%%%%%%%
%%%%%%%%%%%%%%%%%%%%%%%%%%%%%%%%%%%%%%%%%%%%%%%%%%%%%%%%%%%%%%%%%%%%%%%%%%%%%%%%%%%%%%%%%%%%%%%%%%%%
%%%%%%%%%%%%%%%%%%%%%%%%%%%%%%%%%%%%%%%%%%%%%%%%%%%%%%%%%%%%%%%%%%%%%%%%%%%%%%%%%%%%%%%%%%%%%%%%%%%%
%%%%%%%%%%%%%%%%%%%%%%%%%%%%%%%%%%%%%%%%%%%%%%%%%%%%%%%%%%%%%%%%%%%%%%%%%%%%%%%%%%%%%%%%%%%%%%%%%%%%
%%%%%%%%%%%%%%%%%%%%%%%%%%%%%%%%%%%%%%%%%%%%%%%%%%%%%%%%%%%%%%%%%%%%%%%%%%%%%%%%%%%%%%%%%%%%%%%%%%%%
%%%%%%%%%%%%%%%%%%%%%%%%%%%%%%%%%%%%%%%%%%%%%%%%%%%%%%%%%%%%%%%%%%%%%%%%%%%%%%%%%%%%%%%%%%%%%%%%%%%%
\section{On mapping class groups of surfaces}\label{mcg}

For $g\geq0$ and $m\geq0$, let $\Sigma_{g,m}$ be a surface which is obtained by removing $m$ disks from a connected sum of $g$ tori, as shown in Figure~\ref{gen-mcg}~(a).
We call $\Sigma_{g,m}$ a genus $g$ orientable surface with $m$ boundary components.
We define the \textit{mapping class group} $\M(\Sigma_{g,m})$ of $\Sigma_{g,m}$ as the group consisting of isotopy classes of all orientation preserving diffeomorphisms of $\Sigma_{g,m}$.
The \textit{pure mapping class group} $\PM(\Sigma_{g,m})$ of $\Sigma_{g,m}$ is the subgroup of $\M(\Sigma_{g,m})$ consisting of elements which do not permute order of the boundary components of $\Sigma_{g,m}$.
Regarding some boundary component of $\Sigma_{g,n+1}$ as $\ast$, we notice that $\PM(\Sigma_{g,n+1})$ acts on $\pi_1(\Sigma_{g,n},\ast)$ naturally.

For $g\geq1$ and $m\geq0$, let $N_{g,m}$ be a surface which is obtained by removing $m$ disks from a connected sum of $g$ real projective planes.
We call $N_{g,m}$ a genus $g$ non-orientable surface with $m$ boundary components.
We can regard $N_{g,m}$ as a surface which is obtained by attaching $g$ M\"obius bands to $g$ boundary components of $\Sigma_{0,g+m}$, as shown in Figure~\ref{gen-mcg}~(b) or (c).
We call these attached M\"obius bands \textit{crosscaps}.
We define the \textit{mapping class group} $\M(N_{g,m})$ of $N_{g,m}$ as the group consisting of isotopy classes of all diffeomorphisms of $N_{g,m}$.
The \textit{pure mapping class group} $\PM(N_{g,m})$ of $N_{g,m}$ is the subgroup of $\M(N_{g,m})$ consisting of elements which do not permute order of the boundary components of $N_{g,m}$.
Regarding some boundary component of $N_{g,n+1}$ as $\ast$, we notice that $\PM(N_{g,n+1})$ acts on $\pi_1(N_{g,n},\ast)$, and also $\pi_1^+(N_{g,n},\ast)$, naturally.

It is well known that $\PM(\Sigma_{g,m})$ can be generated by only \textit{Dehn twists} (for instance see \cite{D1,D2,L2}).
On the other hand, $\PM(N_{g,m})$ can not be generated by only Dehn twists.
We need \textit{boundary pushing maps} and \textit{crosscap pushing maps} as generators of $\PM(N_{g,m})$, in addition to Dehn twists (see \cite{L1,L3}).
We now define the Dehn twist, the boundary pushing map and the crosscap pushing map.
For a two-sided simple closed curve $c$ of a surface $S$, the Dehn twist $t_c$ about $c$ is the isotopy class of a map as shown in Figure~\ref{DBY}~(a).
When $S$ is orientable, the direction of $t_c$ is the right side with respect to an orientation of $S$.
When $S$ is non-orientable, the direction of $t_c$ is indicated by an arrow written beside $c$ as shown in Figure~\ref{DBY}~(a).
Let $\alpha$ be an oriented arc of $S$ with its two endpoints at a boundary component, as shown in Figure~\ref{DBY}~(b).
The boundary pushing map $B_\alpha$ about $\alpha$ is the isotopy class of a map obtained by pushing the boundary component along $\alpha$.
Let $\alpha$ and $\mu$ be an oriented simple closed curve and a simple closed curve whose regular neighborhood is a crosscap, called a \textit{one-sided simple loop}, of a non-orientable surface, respectively, such that $\alpha$ and $\mu$ intersect transversally at one point, as shown in Figure~\ref{DBY}~(c).
The crosscap pushing map $Y_{\mu,\alpha}$ about $\alpha$ and $\mu$ is the isotopy class of a map obtained by pushing the crosscap, which is the regular neighborhood of $\mu$, along $\alpha$.

%%%%%%%%%%%%%%%%%%%%%%%%%%%%%%%%%%%%%%%%%%%%%%%%%%%%%%%%%%%%%%%%%%%%%%%%%%%%%%%%%%%%%%%%%%%%%%%%%%%%
\begin{figure}[htbp]
\subfigure[The Dehn twist $t_c$ about $c$.]{\includegraphics[scale=0.5]{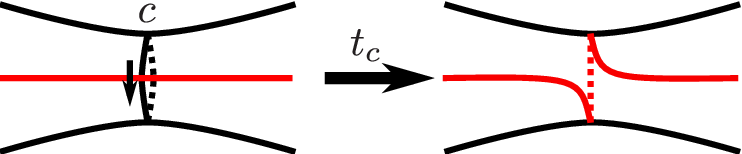}}\\
\subfigure[The boundary pushing map $B_\alpha$ about $\alpha$.]{\includegraphics[scale=0.5]{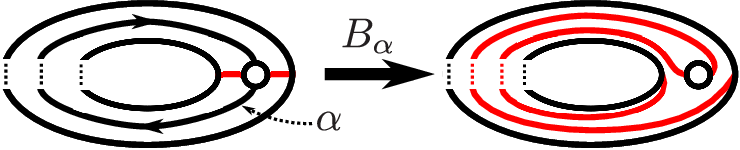}}
\subfigure[The crosscap pushing map $Y_{\mu,\alpha}$ about $\alpha$ and $\mu$.]{\includegraphics[scale=0.5]{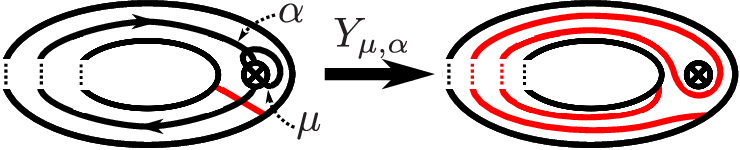}}
\caption{Elements of mapping class groups of surfaces.}\label{DBY}
\end{figure}
%%%%%%%%%%%%%%%%%%%%%%%%%%%%%%%%%%%%%%%%%%%%%%%%%%%%%%%%%%%%%%%%%%%%%%%%%%%%%%%%%%%%%%%%%%%%%%%%%%%%

We have the following theorems.

%%%%%%%%%%%%%%%%%%%%%%%%%%%%%%%%%%%%%%%%%%%%%%%%%%%%%%%%%%%%%%%%%%%%%%%%%%%%%%%%%%%%%%%%%%%%%%%%%%%%
%%%%%%%%%%%%%%%%%%%%%%%%%%%%%%%%%%%%%%%%%%%%%%%%%%%%%%%%%%%%%%%%%%%%%%%%%%%%%%%%%%%%%%%%%%%%%%%%%%%%
\begin{thm}[c.f. \cite{FM}]\label{gen-PMS}
Let $c_0$, $c_1,\dots,c_{2g}$ and $d_1,\dots,d_n$ be simple closed curves of $\Sigma_{g,n+1}$ as shown in Figure~\ref{gen-mcg}~(a).
Then $\PM(\Sigma_{g,n+1})$ is generated by $t_{c_0}$, $t_{c_1},\dots,t_{c_{2g}}$ and $t_{d_1},\dots,t_{d_n}$.
\end{thm}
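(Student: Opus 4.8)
The plan is to prove this by induction on the number $n$ of boundary components of $\Sigma_{g,n+1}$ that are \emph{not} regarded as $\ast$, taking as the base case ($n=0$) the classical fact that $\PM(\Sigma_{g,1})=\M(\Sigma_{g,1})$ is generated by the Dehn twists about the chain $c_0,c_1,\dots,c_{2g}$; this is the Lickorish--Humphries theorem, which I would simply quote (see \cite{FM}). The inductive step reduces $n+1$ boundary components to $n$ by a capping construction, and the two standard exact sequences for doing so are where essentially all the work lies.

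Concretely, assume $\PM(\Sigma_{g,n})$ is generated by $t_{c_0},\dots,t_{c_{2g}},t_{d_1},\dots,t_{d_{n-1}}$. After relabelling, let $d_n$ be parallel to a boundary component $\partial$ of $\Sigma_{g,n+1}$ other than $\ast$, and cap $\partial$ with a once-punctured disk; the result is a copy of $\Sigma_{g,n}$ together with a marked point $p$, and there is a capping exact sequence
\[
1 \longrightarrow \langle t_{d_n}\rangle \longrightarrow \PM(\Sigma_{g,n+1}) \xrightarrow{\ \mathrm{Cap}\ } \PM(\Sigma_{g,n};p)\longrightarrow 1,
\]
whose kernel is generated by the twist about $\partial=d_n$, itself one of the claimed generators. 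Composing with the Birman exact sequence that forgets $p$,
\[
1\longrightarrow \pi_1(\Sigma_{g,n},p)\xrightarrow{\ \mathrm{Push}\ } \PM(\Sigma_{g,n};p)\longrightarrow \PM(\Sigma_{g,n})\longrightarrow 1,
\]
and noting that the curves $c_i$ and $d_j$ ($j\le n-1$) can be realised away from $p$ and inside $\Sigma_{g,n+1}$, so that $t_{c_i}$ and $t_{d_j}$ are honest preimages of the inductive generators under both maps, it remains only to show that every element of the point-pushing subgroup lies in the subgroup generated by the claimed twists.

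For that last point I would fix a generating set of the free group $\pi_1(\Sigma_{g,n},p)$ consisting of \emph{simple} loops (the standard meridians and longitudes of the handles, together with loops encircling the remaining boundary components), recall that for a simple loop $\gamma$ one has $\mathrm{Push}(\gamma)=t_{\gamma_L}t_{\gamma_R}^{-1}$, where $\gamma_L,\gamma_R$ are the two boundary curves of an annular neighbourhood of $\gamma$ (see \cite{FM}), and then rewrite each such product of two Dehn twists in terms of $t_{c_0},\dots,t_{c_{2g}},t_{d_1},\dots,t_{d_n}$ using the change-of-coordinates principle together with the chain and lantern relations. This rewriting, carried out case by case over the chosen loop generators, is the main obstacle: it is precisely the step that uses the exact positions of the curves in Figure~\ref{gen-mcg}(a), and it is the only part that is not formal manipulation of the exact sequences above. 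Alternatively, one could run the induction with the stronger classical statement that the mapping class group of a surface with punctures is generated by finitely many Dehn twists (also in \cite{FM}) and peel off punctures and boundary components one at a time; I would adopt whichever organisation keeps the bookkeeping of the curves $d_i$ cleanest.
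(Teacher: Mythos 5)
Your overall skeleton is sound, and it is in fact the scheme the paper itself uses -- not for Theorem~\ref{gen-PMS}, which the paper does not prove but simply quotes from \cite{FM}, but for the non-orientable analogue Theorem~\ref{gen-PMF}: induction on the number of boundary components via the Birman exact sequence, with the base case taken from the classical generation results and the kernel handled by pushing maps. However, as written your proposal has two genuine gaps.

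First, you misidentify the curves $d_k$. In Figure~\ref{gen-mcg}~(a) the curve $d_k$ is not parallel to the $k$-th boundary component: it is a nonseparating curve running over the last handle and enclosing the first $k$ boundary components, as is forced by the paper's own computation in Proposition~\ref{1}~(3), where $t_{d_k}$ sends $\alpha_g$ to $\alpha_g\epsilon_k$ with $\epsilon_k=\beta_g^{-1}\gamma_1\cdots\gamma_k$ (a twist about a boundary-parallel curve would act trivially on $\pi_1(\Sigma_{g,n},\ast)$). Moreover the paper's mapping class groups are not taken rel boundary -- no boundary twists appear among the generators here or in Theorem~\ref{gen-PMF}, and the exact sequence used in the proof of Theorem~\ref{gen-PMF} has kernel exactly the image of $\pi_1$ -- so a boundary-parallel twist is trivial in $\PM(\Sigma_{g,n+1})$. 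Consequently your capping sequence $1\to\langle t_{d_n}\rangle\to\PM(\Sigma_{g,n+1})\to\PM(\Sigma_{g,n};p)\to1$ has the wrong kernel in this setting, and with your reading of $d_n$ the induction would be proving that $\PM(\Sigma_{g,n+1})$ is generated by the $t_{c_i}$ together with boundary-parallel (hence trivial) twists, i.e.\ by the $t_{c_i}$ alone, which is false for $n\geq1$. Second, the step that actually uses the specific curves of the figure -- writing each push along your chosen simple generators of $\pi_1(\Sigma_{g,n},p)$, via $\mathrm{Push}(\gamma)=t_{\gamma_L}t_{\gamma_R}^{-1}$, as a word in $t_{c_0},\dots,t_{c_{2g}},t_{d_1},\dots,t_{d_n}$ -- is the entire content of the theorem, and you leave it as an unexecuted case-by-case computation. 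Invoking the change-of-coordinates principle is not sufficient there: it supplies some conjugating mapping class, whereas you need the conjugator itself to lie in the subgroup generated by the listed twists (compare the proof of Theorem~\ref{gen-PMF}, where each image of $x_i$ is exhibited as a conjugate of $B_{r_n}$ by the listed generators). Until the identification of the $d_k$ is corrected and that rewriting is actually carried out (or an explicit reference such as \cite{FM} is substituted for it, as the paper does), the proposal is a plausible plan rather than a proof.
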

%%%%%%%%%%%%%%%%%%%%%%%%%%%%%%%%%%%%%%%%%%%%%%%%%%%%%%%%%%%%%%%%%%%%%%%%%%%%%%%%%%%%%%%%%%%%%%%%%%%%
%%%%%%%%%%%%%%%%%%%%%%%%%%%%%%%%%%%%%%%%%%%%%%%%%%%%%%%%%%%%%%%%%%%%%%%%%%%%%%%%%%%%%%%%%%%%%%%%%%%%

%%%%%%%%%%%%%%%%%%%%%%%%%%%%%%%%%%%%%%%%%%%%%%%%%%%%%%%%%%%%%%%%%%%%%%%%%%%%%%%%%%%%%%%%%%%%%%%%%%%%
%%%%%%%%%%%%%%%%%%%%%%%%%%%%%%%%%%%%%%%%%%%%%%%%%%%%%%%%%%%%%%%%%%%%%%%%%%%%%%%%%%%%%%%%%%%%%%%%%%%%
\begin{thm}\label{gen-PMF}
Let $a_1,\dots,a_{g-1}$, $b$, $\mu$, $s_{kl}$ and $r_0$, $r_1,\cdots,r_n$ be simple closed curves and simple arcs of $N_{g,n+1}$ for $1\leq{k<l}\leq{}n$, as shown in Figures~\ref{gen-mcg}~(b) and (c).
Then $\PM(N_{g,n+1})$ is generated by $t_{a_1},\dots,t_{a_{g-1}}$, $t_b$, $Y_{\mu,a_1}$, $t_{s_{kl}}$ and $B_{r_0}$, $B_{r_1},\dots,B_{r_n}$ for $1\leq{k<l}\leq{}n$.
\end{thm}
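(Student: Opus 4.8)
The plan is to induct on the number $n$ of boundary components other than the distinguished one, peeling them off one at a time by means of the capping and Birman exact sequences; the inductive step then reduces to expressing each point-pushing and boundary-pushing map that occurs as an explicit word in the listed generators.

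For the base case $n=0$ the surface is $N_{g,1}$, so that $\PM(N_{g,1})=\M(N_{g,1})$, and the assertion becomes the known fact that $\M(N_{g,1})$ is generated by $t_{a_1},\dots,t_{a_{g-1}}$, $t_b$, $Y_{\mu,a_1}$ and $B_{r_0}$; this follows from the generating results for non-orientable surfaces with boundary (cf.\ \cite{L1,L3}), the few small values of $g$ being checked directly. For the inductive step, fix one boundary component $\partial_n$ of $N_{g,n+1}$ and cap it off with a disk carrying a marked point $q$. This yields a surjection $\PM(N_{g,n+1})\to\PM(N_{g,n};q)$ with kernel $\langle t_{\partial_n}\rangle$, and composing with the Birman exact sequence that forgets $q$ gives a surjection $\Psi\colon\PM(N_{g,n+1})\to\PM(N_{g,n})$ whose kernel $K$ sits in a sequence $1\to\langle t_{\partial_n}\rangle\to K\to\pi_1(N_{g,n},q)\to 1$, the quotient being realized by pushing $q$ around loops (cf.\ \cite{FM}). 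The listed generators of $\PM(N_{g,n+1})$ supported away from $\partial_n$ --- namely $t_{a_1},\dots,t_{a_{g-1}}$, $t_b$, $Y_{\mu,a_1}$, $B_{r_0},\dots,B_{r_{n-1}}$ and the $t_{s_{kl}}$ with $1\le k<l\le n-1$ --- are carried by $\Psi$ precisely onto the generating set that the induction hypothesis provides for $\PM(N_{g,n})$; since any subgroup of $\PM(N_{g,n+1})$ that contains $K=\ker\Psi$ and maps onto $\PM(N_{g,n})$ must be all of $\PM(N_{g,n+1})$, it remains only to prove $K\subset\langle\text{listed generators}\rangle$.

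For this I would first express each boundary twist $t_{\partial_0},\dots,t_{\partial_n}$ in terms of the listed Dehn twists using chain and lantern relations. Next, choose a free basis of $\pi_1(N_{g,n},q)\cong F_{g+n-1}$ consisting of $g$ loops, each running once through one of the crosscaps, together with loops $\delta_1,\dots,\delta_{n-1}$ encircling $\partial_1,\dots,\partial_{n-1}$ (the loop around the remaining boundary component may be dropped, being a product of the others). Pushing $q$ around $\delta_j$ is a boundary push which, by the standard relation expressing such a push along a curve bounding a pair of pants as a product of three Dehn twists --- the interior curve here being isotopic to $s_{jn}$ --- equals $t_{s_{jn}}$ up to the boundary twists $t_{\partial_j}$ and $t_{\partial_n}$; pushing $q$ around a one-sided loop is a crosscap slide which, after conjugating by a suitable product of $t_{a_1},\dots,t_{a_{g-1}},t_b$ (and, if necessary, by a boundary push $B_{r_n}$ to bring $q$ into position), coincides with $Y_{\mu,a_1}$. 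Together with $t_{\partial_n}$, these push maps generate $K$, so $K\subset\langle\text{listed generators}\rangle$ and the induction closes.

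I expect the last step to be the main obstacle: one must pin down the exact curves entering the relevant chain, lantern and pushing relations, keep careful track of twist directions on the non-orientable surface, and confirm that the chosen free basis of $\pi_1(N_{g,n})$ is what makes only the twists $t_{s_{kl}}$ with $1\le k<l\le n$ and the maps $B_{r_j}$ appear. The surfaces of smallest complexity, where the Birman sequence degenerates or some of these relations are not available, would have to be handled separately by hand.
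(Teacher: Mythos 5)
Your overall strategy --- induction on the number of boundary components via the capping/Birman exact sequence, with the inductive step reduced to writing the pushing maps in the kernel as words in conjugates of the listed generators --- is the same as the paper's. However, the key identification in your last step is wrong: the push of $q$ around a one-sided loop is \emph{not} a conjugate of $Y_{\mu,a_1}$. Such a push lies in the kernel $K$ of the map $\PM(N_{g,n+1})\to\PM(N_{g,n})$, whereas $Y_{\mu,a_1}$ is sent by this map to the crosscap slide of $N_{g,n}$, a nontrivial mapping class (it does not even lie in the twist subgroup); since $K$ is normal, no conjugate of $Y_{\mu,a_1}$ can lie in $K$. Blowing the marked point up to a crosscap would indeed turn these pushes into crosscap slides, but here the point arises from capping with a \emph{disk}, so the correct model is the boundary (puncture) slide: as in the paper, the push of $\ast$ around the one-sided loop $x_i$ is a conjugate of the boundary pushing map $B_{r_0}$ (resp.\ $B_{r_n}$) by $t_{a_1},\dots,t_{a_{g-1}}$ and $Y_{\mu,a_1}$. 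This is exactly why the maps $B_{r_k}$ appear in the generating set; in your argument they are only used ``to bring $q$ into position,'' which is a symptom of the misidentification. With this correction your inductive step becomes the paper's.

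Two smaller points. The paper's mapping class groups are isotopy classes of all diffeomorphisms, with no requirement that the boundary be fixed pointwise; consequently boundary-parallel twists are trivial, the kernel of capping is exactly the image of $\pi_1(N_{g,n},\ast)\to\PM(N_{g,n+1})$, $\alpha\mapsto B_{\overline{\alpha}}$, and your preliminary step of expressing $t_{\partial_0},\dots,t_{\partial_n}$ by chain and lantern relations is unnecessary --- and, as stated, an unproved and doubtful claim on a non-orientable surface, so in a boundary-fixing convention it would be a genuine gap rather than a routine lemma. Finally, your base case $N_{g,1}$ is asserted from Lickorish's results, which concern closed non-orientable surfaces and do not directly give this finite set including $B_{r_0}$; the paper instead starts the induction at the closed surface, where $\PM(N_{g,0})$ is generated by $t_{a_1},\dots,t_{a_{g-1}}$, $t_b$, $Y_{\mu,a_1}$ by Chillingworth and Szepietowski, and obtains the $N_{g,1}$ case by the same exact-sequence argument.
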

%%%%%%%%%%%%%%%%%%%%%%%%%%%%%%%%%%%%%%%%%%%%%%%%%%%%%%%%%%%%%%%%%%%%%%%%%%%%%%%%%%%%%%%%%%%%%%%%%%%%
%%%%%%%%%%%%%%%%%%%%%%%%%%%%%%%%%%%%%%%%%%%%%%%%%%%%%%%%%%%%%%%%%%%%%%%%%%%%%%%%%%%%%%%%%%%%%%%%%%%%

%%%%%%%%%%%%%%%%%%%%%%%%%%%%%%%%%%%%%%%%%%%%%%%%%%%%%%%%%%%%%%%%%%%%%%%%%%%%%%%%%%%%%%%%%%%%%%%%%%%%
\begin{figure}[htbp]
\subfigure[Simple closed curves $c_i$ and $d_k$ of $\Sigma_{g,n+1}$ for $0\leq{i}\leq2g$ and $1\leq{k}\leq{n}$.]{\includegraphics[scale=0.5]{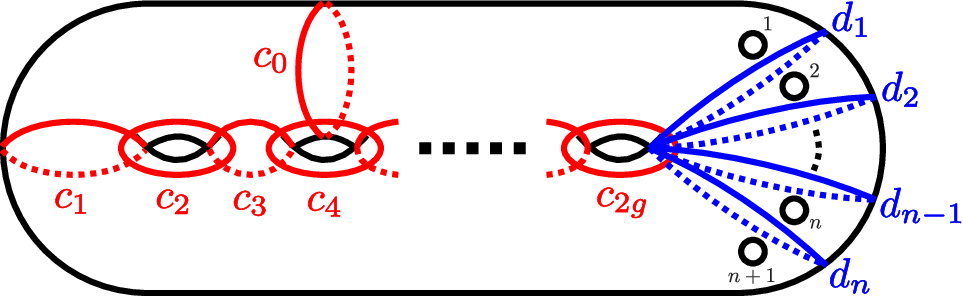}}\\
\subfigure[Simple closed curves and simple arcs $a_i$, $b$, $\mu$ and $r_k$ of $N_{g,n+1}$ for $1\leq{i}\leq{g-1}$ and $0\leq{k}\leq{n}$.]{\includegraphics[scale=0.5]{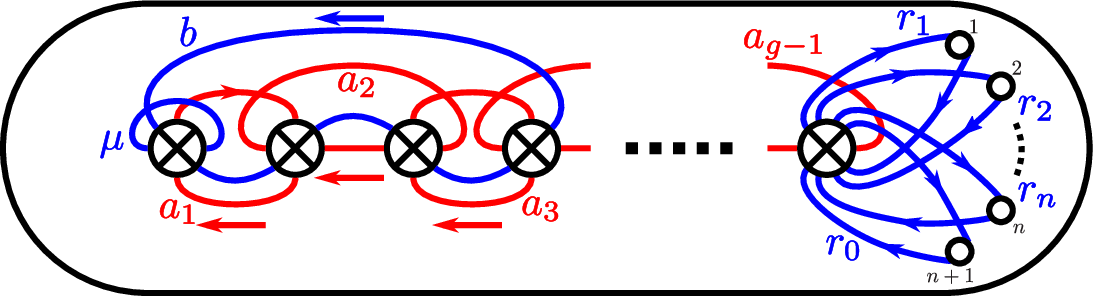}}
\subfigure[A simple closed curve $s_{kl}$ of $N_{g,n+1}$ for $1\leq{k<l}\leq{}n$.]{\includegraphics[scale=0.5]{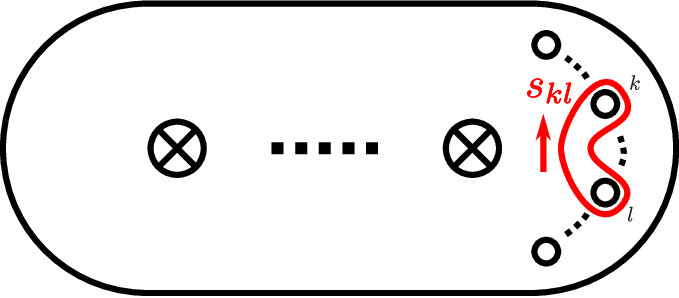}}
\caption{}\label{gen-mcg}
\end{figure}
%%%%%%%%%%%%%%%%%%%%%%%%%%%%%%%%%%%%%%%%%%%%%%%%%%%%%%%%%%%%%%%%%%%%%%%%%%%%%%%%%%%%%%%%%%%%%%%%%%%%

%%%%%%%%%%%%%%%%%%%%%%%%%%%%%%%%%%%%%%%%%%%%%%%%%%%%%%%%%%%%%%%%%%%%%%%%%%%%%%%%%%%%%%%%%%%%%%%%%%%%
%%%%%%%%%%%%%%%%%%%%%%%%%%%%%%%%%%%%%%%%%%%%%%%%%%%%%%%%%%%%%%%%%%%%%%%%%%%%%%%%%%%%%%%%%%%%%%%%%%%%
\begin{proof}
There is an exact sequence
$$\pi_1(N_{g,n},\ast)\to\PM(N_{g,n+1})\to\PM(N_{g,n})\to1,$$
introduced by Birman~\cite{B} for orientable surfaces.
The homomorphism $\pi_1(N_{g,n},\ast)\to\PM(N_{g,n+1})$ is defined as $\alpha\mapsto{}B_{\overline{\alpha}}$, where $\overline{\alpha}$ is an arc which is obtained from $\alpha$ by regarding $\ast$ as a boundary component.
The homomorphism $\PM(N_{g,n+1})\to\PM(N_{g,n})$ is defined as the map which is induced by capping the boundary component with a disk.

Let $x_1,\dots,x_g$ and $y_1,\dots,y_{n-1}$ be oriented simple loops of $N_{g,n}$ based at $\ast$, as shown in Figure~\ref{gen-pi_1-non-ori-surf}.
It is well known that $\pi_1(N_{g,n},\ast)$ is generated by $x_1,\dots,x_g$ and $y_1,\dots,y_{n-1}$.
It is easy to check that $t_{a_i}F^{i-1}Y_{\mu,a_1}^{(-1)^{i-1}}F^{1-i}(x_{i+1})=x_i$ for $i=1,\dots,g-1$, where $F=t_{a_1}t_{a_2}\cdots{}t_{a_{g-1}}$.
Therefore, since the homomorphism $\pi_1(N_{g,0},\ast)\to\PM(N_{g,1})$ sends $x_g$ to $B_{r_0}$, we see that this homomorphism sends  $x_i$ to a conjugate element of $B_{r_0}$ by $t_{a_1},\dots,t_{a_{g-1}}$ and $Y_{\mu,a_1}$ from the relation $B_{f(r_0)}=fB_{r_0}f^{-1}$ (for example see Lemma~2.4 in \cite{Kor}).
In addition, regarding $\ast$ as the $n$-th boundary component of $N_{g,n+1}$, it follows that the homomorphism $\pi_1(N_{g,n},\ast)\to\PM(N_{g,n+1})$ sends $x_i$ to a conjugate element of $B_{r_n}$ by $t_{a_1},\dots,t_{a_{g-1}}$ and $Y_{\mu,a_1}$ for $n\geq1$ and $1\leq{i}\leq{g}$ from a similar argument, and $y_k$ to $s_{kn}$ for $n\geq2$ and $1\leq{k}\leq{n-1}$.
It is known that $\PM(N_{g,0})$ is generated by $t_{a_1},\dots,t_{a_{g-1}}$, $t_b$ and $Y_{\mu,a_1}$ (see \cite{C,S2}).
Therefore using the exact sequence above, we obtain the generating set inductively.

%%%%%%%%%%%%%%%%%%%%%%%%%%%%%%%%%%%%%%%%%%%%%%%%%%%%%%%%%%%%%%%%%%%%%%%%%%%%%%%%%%%%%%%%%%%%%%%%%%%%
\begin{figure}[htbp]
\includegraphics[scale=0.5]{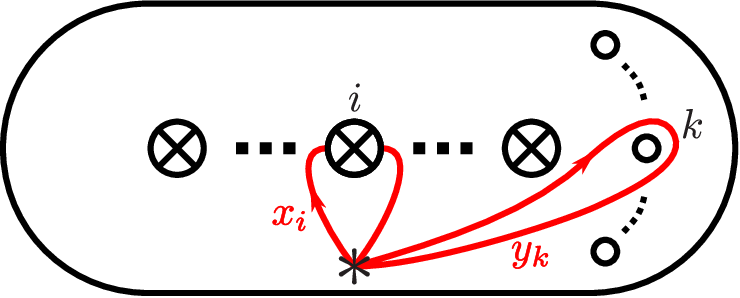}
\caption{Oriented simple loops $x_i$ and $y_k$ of $N_{g,n}$ based at $\ast$ for $1\leq{i}\leq{g}$ and $1\leq{k}\leq{n}$.}\label{gen-pi_1-non-ori-surf}
\end{figure}
%%%%%%%%%%%%%%%%%%%%%%%%%%%%%%%%%%%%%%%%%%%%%%%%%%%%%%%%%%%%%%%%%%%%%%%%%%%%%%%%%%%%%%%%%%%%%%%%%%%%
\end{proof}
%%%%%%%%%%%%%%%%%%%%%%%%%%%%%%%%%%%%%%%%%%%%%%%%%%%%%%%%%%%%%%%%%%%%%%%%%%%%%%%%%%%%%%%%%%%%%%%%%%%%
%%%%%%%%%%%%%%%%%%%%%%%%%%%%%%%%%%%%%%%%%%%%%%%%%%%%%%%%%%%%%%%%%%%%%%%%%%%%%%%%%%%%%%%%%%%%%%%%%%%%

Note that a finite generating set of $\PM(N_{g,n+1})$ which is different from that of Theorem~\ref{gen-PMF} was already given by Korkmaz~\cite{Kor}.
However we use the generating set of Theorem~\ref{gen-PMF} in this paper.

%%%%%%%%%%%%%%%%%%%%%%%%%%%%%%%%%%%%%%%%%%%%%%%%%%%%%%%%%%%%%%%%%%%%%%%%%%%%%%%%%%%%%%%%%%%%%%%%%%%%
%%%%%%%%%%%%%%%%%%%%%%%%%%%%%%%%%%%%%%%%%%%%%%%%%%%%%%%%%%%%%%%%%%%%%%%%%%%%%%%%%%%%%%%%%%%%%%%%%%%%
%%%%%%%%%%%%%%%%%%%%%%%%%%%%%%%%%%%%%%%%%%%%%%%%%%%%%%%%%%%%%%%%%%%%%%%%%%%%%%%%%%%%%%%%%%%%%%%%%%%%
%%%%%%%%%%%%%%%%%%%%%%%%%%%%%%%%%%%%%%%%%%%%%%%%%%%%%%%%%%%%%%%%%%%%%%%%%%%%%%%%%%%%%%%%%%%%%%%%%%%%
%%%%%%%%%%%%%%%%%%%%%%%%%%%%%%%%%%%%%%%%%%%%%%%%%%%%%%%%%%%%%%%%%%%%%%%%%%%%%%%%%%%%%%%%%%%%%%%%%%%%
%%%%%%%%%%%%%%%%%%%%%%%%%%%%%%%%%%%%%%%%%%%%%%%%%%%%%%%%%%%%%%%%%%%%%%%%%%%%%%%%%%%%%%%%%%%%%%%%%%%%
%%%%%%%%%%%%%%%%%%%%%%%%%%%%%%%%%%%%%%%%%%%%%%%%%%%%%%%%%%%%%%%%%%%%%%%%%%%%%%%%%%%%%%%%%%%%%%%%%%%%
%%%%%%%%%%%%%%%%%%%%%%%%%%%%%%%%%%%%%%%%%%%%%%%%%%%%%%%%%%%%%%%%%%%%%%%%%%%%%%%%%%%%%%%%%%%%%%%%%%%%
%%%%%%%%%%%%%%%%%%%%%%%%%%%%%%%%%%%%%%%%%%%%%%%%%%%%%%%%%%%%%%%%%%%%%%%%%%%%%%%%%%%%%%%%%%%%%%%%%%%%
%%%%%%%%%%%%%%%%%%%%%%%%%%%%%%%%%%%%%%%%%%%%%%%%%%%%%%%%%%%%%%%%%%%%%%%%%%%%%%%%%%%%%%%%%%%%%%%%%%%%
\section{Proof of Theorem~\ref{main-1}}\label{pi}

In Subsection~\ref{ori}, we prove Theorem~\ref{main-1} of the case where $S$ is orientable.
In Subsection~\ref{non-ori}, we prove Theorem~\ref{main-1} of the case where $S$ is non-orientable.

%%%%%%%%%%%%%%%%%%%%%%%%%%%%%%%%%%%%%%%%%%%%%%%%%%%%%%%%%%%%%%%%%%%%%%%%%%%%%%%%%%%%%%%%%%%%%%%%%%%%
%%%%%%%%%%%%%%%%%%%%%%%%%%%%%%%%%%%%%%%%%%%%%%%%%%%%%%%%%%%%%%%%%%%%%%%%%%%%%%%%%%%%%%%%%%%%%%%%%%%%
%%%%%%%%%%%%%%%%%%%%%%%%%%%%%%%%%%%%%%%%%%%%%%%%%%%%%%%%%%%%%%%%%%%%%%%%%%%%%%%%%%%%%%%%%%%%%%%%%%%%
%%%%%%%%%%%%%%%%%%%%%%%%%%%%%%%%%%%%%%%%%%%%%%%%%%%%%%%%%%%%%%%%%%%%%%%%%%%%%%%%%%%%%%%%%%%%%%%%%%%%
%%%%%%%%%%%%%%%%%%%%%%%%%%%%%%%%%%%%%%%%%%%%%%%%%%%%%%%%%%%%%%%%%%%%%%%%%%%%%%%%%%%%%%%%%%%%%%%%%%%%
\subsection{The case where $S$ is orientable}\label{ori}\

Let $\alpha_1,\dots,\alpha_g$, $\beta_1,\dots,\beta_g$ and $\gamma_1,\dots,\gamma_{n-1}$ be oriented simple loops of $\Sigma_{g,n}$ based at $\ast$, as shown in Figure~\ref{gen-pi_1-ori-surf}.
It is well known that $\pi_1(\Sigma_{g,n},\ast)$ is the free group freely generated by these loops for $n\geq1$ and the group generated by $\alpha_1,\dots,\alpha_g$ and $\beta_1,\dots,\beta_g$ which has one relation $[\alpha_1,\beta_1]\cdots[\alpha_g,\beta_g]=1$ for $n=0$, where $[x,y]=xyx^{-1}y^{-1}$.

%%%%%%%%%%%%%%%%%%%%%%%%%%%%%%%%%%%%%%%%%%%%%%%%%%%%%%%%%%%%%%%%%%%%%%%%%%%%%%%%%%%%%%%%%%%%%%%%%%%%
\begin{figure}[htbp]
\includegraphics[scale=0.5]{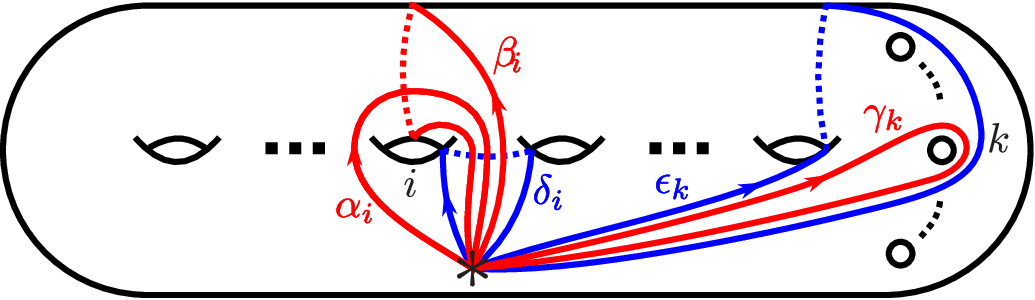}
\caption{Oriented simple loops $\alpha_i$, $\beta_i$, $\gamma_k$, $\delta_i$ and $\epsilon_k$ of $\Sigma_{g,n}$ based at $\ast$ for $1\leq{i}\leq{g}$ and $1\leq{k}\leq{n}$, except for $i=g$ for $\delta_i$.}\label{gen-pi_1-ori-surf}
\end{figure}
%%%%%%%%%%%%%%%%%%%%%%%%%%%%%%%%%%%%%%%%%%%%%%%%%%%%%%%%%%%%%%%%%%%%%%%%%%%%%%%%%%%%%%%%%%%%%%%%%%%%

Let $X$ be a set consisting of $S_\alpha$, where $\alpha$ is a non-separating simple loop or a separating simple loop which bounds the $m$-th boundary component for $1\leq{m}\leq{n-1}$, and let $X^\prime$ be the following subset of $X$:
$$X^\prime=\{S_{\alpha_1},\dots,S_{\alpha_g},S_{\beta_1},\dots,S_{\beta_g},S_{\gamma_1},\dots,S_{\gamma_{n-1}}\}.$$
Let $Y$ be the generating set for $\PM(\Sigma_{g,n+1})$ given in Theorem~\ref{gen-PMS}.
In the actions on $\pi_1(\Sigma_{g,n},\ast)$ and $\pi$ by $\PM(\Sigma_{g,n+1})$, we regard the $(n+1)$-st boundary component of $\Sigma_{g,n+1}$ as $\ast$.
We define $f(S_\alpha)=S_{f_\sharp(\alpha)}$ for $S_\alpha\in\pi$ and $f\in\PM(\Sigma_{g,n+1})$, where $f_\sharp$ is the map on $\pi_1(\Sigma_{g,n},\ast)$ induced from $f$.
We prove the following proposition.

%%%%%%%%%%%%%%%%%%%%%%%%%%%%%%%%%%%%%%%%%%%%%%%%%%%%%%%%%%%%%%%%%%%%%%%%%%%%%%%%%%%%%%%%%%%%%%%%%%%%
%%%%%%%%%%%%%%%%%%%%%%%%%%%%%%%%%%%%%%%%%%%%%%%%%%%%%%%%%%%%%%%%%%%%%%%%%%%%%%%%%%%%%%%%%%%%%%%%%%%%
\begin{prop}\label{1}
\begin{enumerate}
\item	$X$ generates $\pi$.
\item	$\PM(\Sigma_{g,n+1})(X^\prime)=X$.
\item	For any $x\in{X^\prime}$ and $y\in{Y}$, $y^{\pm1}(x)$ is in the subgroup of $\pi$ generated by $X^\prime$.
\end{enumerate}
\end{prop}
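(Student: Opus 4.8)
The plan is to prove the three items and then apply Lemma~\ref{main-lem} with $G=\pi$, $H=\PM(\Sigma_{g,n+1})$, and the sets $X\supset X'$ and $Y$ introduced above; the lemma then gives that $X'$ generates $\pi$, which together with the obvious homomorphism $\pi\to\pi_1(\Sigma_{g,n},\ast)$ sending $S_\alpha\mapsto\alpha$ yields Theorem~\ref{main-1} in the orientable case. The recurring technical point is that relation~(2) can only be used in the form $S_{\eta_1\cdots\eta_{j+1}}=S_{\eta_1\cdots\eta_j}\,S_{\eta_{j+1}}$, and only when \emph{each} initial product $\eta_1\cdots\eta_i$ represents a non trivial simple loop; so every manipulation below must be accompanied by a factorization all of whose initial segments are simple. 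It is also convenient to record at the outset that $\PM(\Sigma_{g,n+1})$ really does act on $\pi$ via $\phi\cdot S_\alpha=S_{\phi(\alpha)}$ --- a diffeomorphism fixing $\ast$ carries non trivial simple loops to non trivial simple loops and preserves relations~(1)--(2) --- that it preserves the property ``non separating'', and that, since it permutes no boundary component, it preserves the property ``bounds the $m$-th boundary component'' for each fixed $m$; in particular $\PM(\Sigma_{g,n+1})(X')\subseteq X$, which is half of item~(2).

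For item~(1) I would show that $S_\alpha\in\langle X\rangle$ for every non trivial simple loop $\alpha$. If $\alpha$ is non separating, or separating and isotopic to an annular neighbourhood of the $m$-th boundary with $m\le n-1$, then $S_\alpha\in X$ and there is nothing to do. Otherwise $\alpha$ separates $\Sigma_{g,n}$; choose the complementary piece $R$ disjoint from $\partial_n$ (at least one of the two pieces is, and if both are either choice will do), so that $\ast$ lies on the copy of $\alpha$ in $\partial R$ and every other boundary component of $R$ has index $\le n-1$. Taking a standard generating system of $\pi_1(R,\ast)$ in standard position, the oriented boundary of $R$ reads as a word $\alpha^{\pm1}=c_1c_2\cdots c_r$ in which each $c_i$ is either a handle loop of $R$ (non separating in $\Sigma_{g,n}$, so $S_{c_i}\in X$) or a loop around a boundary component of $R$ of index $\le n-1$ (so $S_{c_i}\in X$). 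One then checks that every initial product $c_1\cdots c_j$ is again a non trivial simple loop --- this is the delicate point of the item, carried out by recognising each $c_1\cdots c_j$ as the image of a standard loop under a homeomorphism fixing $\ast$, or directly from a picture --- after which iterating relation~(2) along the word, and then relation~(1), expresses $S_\alpha$ as a product of elements of $X$.

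For item~(2) it remains to prove $X\subseteq\PM(\Sigma_{g,n+1})(X')$. By Theorem~\ref{gen-PMS}, $\langle Y\rangle=\PM(\Sigma_{g,n+1})$, so it suffices to see that $\PM(\Sigma_{g,n+1})$ acts transitively on the set of non separating simple loops based at $\ast$ and, for each $m$ with $1\le m\le n-1$, on the set of simple loops isotopic to an annular neighbourhood of the $m$-th boundary component. Both statements are instances of the change of coordinates principle (cf.\ \cite{FM}) applied to the arcs of $\Sigma_{g,n+1}$ joining the $(n+1)$-st boundary component to itself which correspond to these loops: non separating such arcs all have homeomorphic complements, and the arcs cutting off the $m$-th boundary component together with the $(n+1)$-st all have homeomorphic complements. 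Since $\alpha_1$ represents the first orbit and $\gamma_m$ represents the orbit attached to the $m$-th boundary, and since $S_{\alpha_1}$ and $S_{\gamma_1},\dots,S_{\gamma_{n-1}}$ all lie in $X'$, we obtain $\PM(\Sigma_{g,n+1})(X')=X$.

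For item~(3), running over the finitely many pairs $(x,y)$ with $x\in X'$ and $y\in Y=\{t_{c_0},\dots,t_{c_{2g}},t_{d_1},\dots,t_{d_n}\}$, one computes $y^{\pm1}(x)$ from the standard positions of the curves $c_0,\dots,c_{2g},d_1,\dots,d_n$ in Figure~\ref{gen-mcg}~(a). In most cases $x$ is disjoint from the twisting curve and $y^{\pm1}(x)=x$; in the remaining cases the Dehn twist formula (insert a copy of the twisting curve, based along a connecting arc, at each intersection) writes $y^{\pm1}(x)$ as a short word in $\alpha_1,\dots,\alpha_g,\beta_1,\dots,\beta_g,\gamma_1,\dots,\gamma_{n-1}$ --- for the $t_{c_i}$ the usual Lickorish-type expressions, for the $t_{d_k}$ a conjugate of $\gamma_k$ or of $x$. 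For each resulting word one selects a parenthesisation whose initial segments are visibly boundaries of subsurfaces of $\Sigma_{g,n}$, hence simple loops, and applies relation~(2) to display $S_{y^{\pm1}(x)}$ as a product of elements of $X'$ and their inverses, i.e.\ an element of $\langle X'\rangle$. Items (1)--(3) are exactly the hypotheses of Lemma~\ref{main-lem}, which then proves the proposition; I expect the main obstacle to lie in item~(1), namely in the bookkeeping that keeps every intermediate loop simple so that each appeal to relation~(2) is legitimate --- without it one only obtains the trivial statement that $\alpha$ is a product of the $\alpha_i,\beta_i,\gamma_k$ in $\pi_1$, not the needed statement about $S_\alpha$ in $\pi$.
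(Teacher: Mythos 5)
Your proposal is correct and follows essentially the same route as the paper: item (1) by expanding a separating loop along the boundary word of a complementary subsurface avoiding the $n$-th boundary component via relation~(2), item (2) by transitivity (change of coordinates) of $\PM(\Sigma_{g,n+1})$ on non separating based loops and on loops bounding a fixed $m$-th boundary component, and item (3) by explicit computation of the twist actions on $X^\prime$ followed by relation~(2) (the paper packages the auxiliary loops $\gamma_n,\delta_i,\epsilon_k$ arising there in its Lemma~\ref{d-e}). Your emphasis that every intermediate product must itself be a non trivial simple loop before relation~(2) may be invoked is exactly the point the paper treats implicitly.
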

%%%%%%%%%%%%%%%%%%%%%%%%%%%%%%%%%%%%%%%%%%%%%%%%%%%%%%%%%%%%%%%%%%%%%%%%%%%%%%%%%%%%%%%%%%%%%%%%%%%%
%%%%%%%%%%%%%%%%%%%%%%%%%%%%%%%%%%%%%%%%%%%%%%%%%%%%%%%%%%%%%%%%%%%%%%%%%%%%%%%%%%%%%%%%%%%%%%%%%%%%

In order to prove the proposition, we show the following lemma.

%%%%%%%%%%%%%%%%%%%%%%%%%%%%%%%%%%%%%%%%%%%%%%%%%%%%%%%%%%%%%%%%%%%%%%%%%%%%%%%%%%%%%%%%%%%%%%%%%%%%
%%%%%%%%%%%%%%%%%%%%%%%%%%%%%%%%%%%%%%%%%%%%%%%%%%%%%%%%%%%%%%%%%%%%%%%%%%%%%%%%%%%%%%%%%%%%%%%%%%%%
\begin{lem}\label{d-e}
For $1\leq{i}\leq{g-1}$ and $1\leq{k}\leq{n}$, $S_{\gamma_n}$, $S_{\delta_i}$ and $S_{\epsilon_k}$ are in the subgroup of $\pi$ generated by $X^\prime$, where $\gamma_n$, $\delta_i$ and $\epsilon_k$ are simple loops of $\Sigma_{g,n}$ based at $\ast$ as shown in Figure~\ref{gen-pi_1-ori-surf}.
\end{lem}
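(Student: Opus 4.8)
The plan is to write each of $\gamma_n$, $\delta_i$ and $\epsilon_k$ as an element of $\pi_1(\Sigma_{g,n},\ast)$ assembled from $\alpha_1,\dots,\alpha_g,\beta_1,\dots,\beta_g,\gamma_1,\dots,\gamma_{n-1}$ through a chain of loops, each of which is non-trivial and simple, and then to translate this chain into a word in the corresponding symbols using relations (1) and (2) one step at a time. Here relation (1) makes the subgroup $\langle X'\rangle\le\pi$ closed under $S_\zeta\mapsto S_{\zeta^{-1}}=S_\zeta^{-1}$, and relation (2) shows that if $\zeta$, $\eta$ and $\zeta\eta$ are all non-trivial simple loops and $S_\zeta,S_\eta\in\langle X'\rangle$, then $S_{\zeta\eta}=S_\zeta S_\eta\in\langle X'\rangle$. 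Thus it suffices, for each target loop, to exhibit a factorisation in $\pi_1(\Sigma_{g,n},\ast)$ into such pieces for which every partial product is non-trivial and simple; these facts are read off Figure~\ref{gen-pi_1-ori-surf}, and where convenient the simplicity of a product can also be checked by noting that a regular neighbourhood of the union of two simple loops meeting only at $\ast$ is a copy of $\Sigma_{0,3}$ or of $\Sigma_{1,1}$, of which the product loop is a boundary component.

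I first treat the $\delta_i$, together with $\delta_g:=[\alpha_1,\beta_1]\cdots[\alpha_g,\beta_g]$ which is needed afterwards, by induction on $i$. From Figure~\ref{gen-pi_1-ori-surf} the loops $\alpha_j\beta_j$ and $\beta_j\alpha_j$ are non-trivial and simple, so relation (2) gives $S_{\alpha_j\beta_j}=S_{\alpha_j}S_{\beta_j}$ and $S_{\beta_j\alpha_j}=S_{\beta_j}S_{\alpha_j}$; since $[\alpha_j,\beta_j]=(\alpha_j\beta_j)(\beta_j\alpha_j)^{-1}$ is simple as well, relations (1) and (2) yield
\[
S_{[\alpha_j,\beta_j]}=S_{\alpha_j\beta_j}\,S_{\beta_j\alpha_j}^{-1}=S_{\alpha_j}S_{\beta_j}S_{\alpha_j}^{-1}S_{\beta_j}^{-1}\in\langle X'\rangle .
\]
Figure~\ref{gen-pi_1-ori-surf} further shows that $\delta_1=[\alpha_1,\beta_1]$ and $\delta_i=\delta_{i-1}[\alpha_i,\beta_i]$ for $i\ge2$, and that $\delta_{i-1}$, $[\alpha_i,\beta_i]$ and $\delta_i$ are all non-trivial and simple, so $S_{\delta_i}=S_{\delta_{i-1}}S_{[\alpha_i,\beta_i]}$, which lies in $\langle X'\rangle$ by the inductive hypothesis (the base case $i=1$ being the displayed formula). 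Taking $i=g$ gives $S_{\delta_g}\in\langle X'\rangle$.

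It remains to treat $\epsilon_k$ and $\gamma_n$, using the pieces $\gamma_1,\dots,\gamma_{n-1}$ and $\delta_g$. From Figure~\ref{gen-pi_1-ori-surf} each $\epsilon_k$ factors as a product of $\delta_g$ and some of $\gamma_1,\dots,\gamma_{n-1}$ in which every partial product is non-trivial and simple, so $S_{\epsilon_k}$ is the corresponding word in $S_{\delta_g}$ and $S_{\gamma_1},\dots,S_{\gamma_{n-1}}$ and hence lies in $\langle X'\rangle$; and since $\gamma_n=(\gamma_1\cdots\gamma_{n-1})^{-1}\delta_g^{-1}$ in $\pi_1(\Sigma_{g,n},\ast)$, reading this identity through the same chain of simple loops exhibits $S_{\gamma_n}$ as such a word too. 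The only step calling for any care is the choice of these factorisations so that all intermediate loops are genuinely simple closed curves rather than arbitrary partial products of the $\gamma_j$ and the commutators $[\alpha_j,\beta_j]$; this is exactly the role of the auxiliary loops $\delta_i$ and $\epsilon_k$ in Figure~\ref{gen-pi_1-ori-surf}, and once their simplicity and non-triviality are granted everything else is a routine application of relations (1) and (2).
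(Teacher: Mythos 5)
There is a genuine gap, and it lies not in your method but in the identification of the loops the lemma is about. In Figure~\ref{gen-pi_1-ori-surf} the loops $\delta_i$ ($1\leq i\leq g-1$) and $\epsilon_k$ are \emph{non-separating} simple loops: they are the based representatives of the twisting curves $c_{2i+1}$ and $d_k$, and the identities the paper uses (and needs later, in part (3) of Proposition~\ref{1}, where these elements arise as the effect of $t_{c_{2i-1}}$ and $t_{d_k}$ on the generators) are
\[
\delta_i=\beta_i^{-1}\alpha_{i+1}\beta_{i+1}\alpha_{i+1}^{-1},\qquad
\epsilon_k=\beta_g^{-1}\gamma_1\cdots\gamma_k ,
\]
so that $S_{\delta_i}=S_{\beta_i}^{-1}S_{\alpha_{i+1}}S_{\beta_{i+1}}S_{\alpha_{i+1}}^{-1}$ and $S_{\epsilon_k}=S_{\beta_g}^{-1}S_{\gamma_1}\cdots S_{\gamma_k}$. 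You instead take $\delta_i=[\alpha_1,\beta_1]\cdots[\alpha_i,\beta_i]$ and build $\epsilon_k$ out of this total commutator product and some $\gamma_j$'s; those are separating loops, different elements of $\pi_1(\Sigma_{g,n},\ast)$, and proving that their symbols lie in $\langle X^\prime\rangle$ does not establish the lemma (nor would it feed correctly into the computations of Proposition~\ref{1}, which is the whole purpose of the lemma). Only your treatment of $\gamma_n$ matches the paper: the relation $[\alpha_1,\beta_1]\cdots[\alpha_g,\beta_g]\gamma_1\cdots\gamma_n=1$ gives $S_{\gamma_n}=([S_{\alpha_1},S_{\beta_1}]\cdots[S_{\alpha_g},S_{\beta_g}]S_{\gamma_1}\cdots S_{\gamma_{n-1}})^{-1}$, which is exactly the paper's computation.

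On the positive side, your general strategy is the paper's: write the target loop as a word in the basis via an identity in $\pi_1(\Sigma_{g,n},\ast)$ and expand with relations (1) and (2). Your remark that relation (2) only applies when the two factors \emph{and} their product are non-trivial simple loops, so that one must choose a factorisation whose partial products are simple, is a legitimate point that the paper leaves implicit (for the correct $\delta_i$ and $\epsilon_k$ the left-to-right partial products $\beta_i^{-1}\alpha_{i+1}$, $\beta_i^{-1}\alpha_{i+1}\beta_{i+1}$, resp.\ $\beta_g^{-1}\gamma_1\cdots\gamma_j$, are indeed simple, and similarly for $\gamma_n$). To repair the proof, replace your guessed curves by the identities displayed above and run the same step-by-step expansion; your auxiliary computation that $S_{[\alpha_j,\beta_j]}\in\langle X^\prime\rangle$ is correct but is only needed for the $\gamma_n$ case.
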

%%%%%%%%%%%%%%%%%%%%%%%%%%%%%%%%%%%%%%%%%%%%%%%%%%%%%%%%%%%%%%%%%%%%%%%%%%%%%%%%%%%%%%%%%%%%%%%%%%%%
%%%%%%%%%%%%%%%%%%%%%%%%%%%%%%%%%%%%%%%%%%%%%%%%%%%%%%%%%%%%%%%%%%%%%%%%%%%%%%%%%%%%%%%%%%%%%%%%%%%%

%%%%%%%%%%%%%%%%%%%%%%%%%%%%%%%%%%%%%%%%%%%%%%%%%%%%%%%%%%%%%%%%%%%%%%%%%%%%%%%%%%%%%%%%%%%%%%%%%%%%
%%%%%%%%%%%%%%%%%%%%%%%%%%%%%%%%%%%%%%%%%%%%%%%%%%%%%%%%%%%%%%%%%%%%%%%%%%%%%%%%%%%%%%%%%%%%%%%%%%%%
\begin{proof}
By the relations (1) and (2) of $\pi$, we calculate
\begin{eqnarray*}
S_{\gamma_n}&=&S_{([\alpha_1,\beta_1]\cdots[\alpha_g,\beta_g]\gamma_1\cdots\gamma_{n-1})^{-1}}
=([S_{\alpha_1},S_{\beta_1}]\cdots[S_{\alpha_g},S_{\beta_g}]S_{\gamma_1}\cdots{}S_{\gamma_{n-1}})^{-1},\\
S_{\delta_i}&=&S_{\beta_i^{-1}\alpha_{i+1}\beta_{i+1}\alpha_{i+1}^{-1}}
=S_{\beta_i}^{-1}S_{\alpha_{i+1}}S_{\beta_{i+1}}S_{\alpha_{i+1}}^{-1},\\
S_{\epsilon_k}&=&S_{\beta_g^{-1}\gamma_1\cdots\gamma_k}
=S_{\beta_g}^{-1}S_{\gamma_1}\cdots{}S_{\gamma_k}
\end{eqnarray*}
for $1\leq{i}\leq{g-1}$ and $1\leq{k}\leq{n}$.
Since each symbol of the right hand sides is in $X^\prime$, we get the claim.
\end{proof}
%%%%%%%%%%%%%%%%%%%%%%%%%%%%%%%%%%%%%%%%%%%%%%%%%%%%%%%%%%%%%%%%%%%%%%%%%%%%%%%%%%%%%%%%%%%%%%%%%%%%
%%%%%%%%%%%%%%%%%%%%%%%%%%%%%%%%%%%%%%%%%%%%%%%%%%%%%%%%%%%%%%%%%%%%%%%%%%%%%%%%%%%%%%%%%%%%%%%%%%%%

%%%%%%%%%%%%%%%%%%%%%%%%%%%%%%%%%%%%%%%%%%%%%%%%%%%%%%%%%%%%%%%%%%%%%%%%%%%%%%%%%%%%%%%%%%%%%%%%%%%%
%%%%%%%%%%%%%%%%%%%%%%%%%%%%%%%%%%%%%%%%%%%%%%%%%%%%%%%%%%%%%%%%%%%%%%%%%%%%%%%%%%%%%%%%%%%%%%%%%%%%
\begin{proof}[Proof of Proposition~\ref{1}]
(1)	For any generator $S_\alpha$ of $\pi$, if $\alpha$ is a non-separating simple loop, $S_\alpha$ is in $X$ clearly.
	If $\alpha$ is a separating simple loop, one of a component of the complement of $\alpha$ is homeomorphic to $\Sigma_{h,m+1}$ for some $0\leq{h}\leq{g}$ and $0\leq{m}\leq{n}$.
	Therefore, there is $f\in\PM(\Sigma_{g,n+1})$ such that $\alpha=f_\sharp([\alpha_1,\beta_1]\cdots[\alpha_h,\beta_h]\gamma_{k_1}\cdots\gamma_{k_m})$ for some $1\leq{k_1<\cdots<k_m}\leq{n}$ (see Figure~\ref{normal-position-loop-ori-surf}).
	Then, by the relation (2) of $\pi$, we have $S_\alpha=[S_{f_\sharp(\alpha_1)},S_{f_\sharp(\beta_1)}]\cdots[S_{f_\sharp(\alpha_h)},S_{f_\sharp(\beta_h)}]S_{f_\sharp(\gamma_{k_1})}\cdots{}S_{f_\sharp(\gamma_{k_m})}$.
	Since each symbol of the right hand side is in $X$, we conclude that $X$ generates $\pi$.
	%%%%%%%%%%%%%%%%%%%%%%%%%%%%%%%%%%%%%%%%%%%%%%%%%%%%%%%%%%%%%%%%%%%%%%%%%%%%%%%%%%%%%%%%%%%%%%%%%%%%
	\begin{figure}[htbp]
	\includegraphics[scale=0.5]{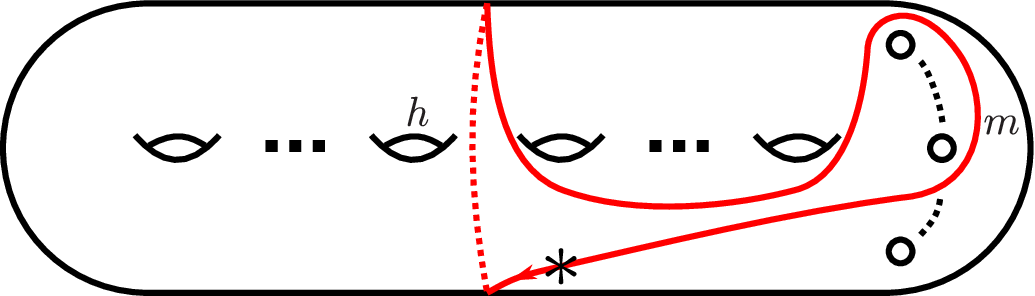}
	\caption{An oriented simple loop $[\alpha_1,\beta_1]\cdots[\alpha_h,\beta_h]\gamma_1\cdots\gamma_m$, one of a component of whose complement is homeomorphic to $\Sigma_{h,m+1}$, for $0\leq{h}\leq{g}$ and $0\leq{m}\leq{n}$.}\label{normal-position-loop-ori-surf}
	\end{figure}
	%%%%%%%%%%%%%%%%%%%%%%%%%%%%%%%%%%%%%%%%%%%%%%%%%%%%%%%%%%%%%%%%%%%%%%%%%%%%%%%%%%%%%%%%%%%%%%%%%%%%

(2)	For any $S_\alpha\in{X}$, if $\alpha$ is a non-separating simple loop, there is $f\in\PM(\Sigma_{g,n+1})$ such that $f_\sharp(\alpha_1)=\alpha$, and hence $f(S_{\alpha_1})=S_\alpha$.
	If $\alpha$ is a separating simple loop which bounds the $m$-th boundary component for $1\leq{m}\leq{n-1}$, there is $f\in\PM(\Sigma_{g,n+1})$ such that $f_\sharp(\gamma_m)=\alpha$, and hence $f(S_{\gamma_m})=S_\alpha$.
	Therefore we obtain the claim.

(3)	In this proof, we omit details of calculations.
	
	Let $y=t_{c_0}$.
	We calculate
	$$
	y(S_{\alpha_2})=S_{\alpha_2\beta_2^{-1}}\overset{(1),(2)}{=}S_{\alpha_2}S_{\beta_2}^{-1},~
	y^{-1}(S_{\alpha_2})=S_{\alpha_2\beta_2}\overset{(2)}{=}S_{\alpha_2}S_{\beta_2}
	$$
	and $y^{\pm1}(x)=x$ for any other $x\in{X^\prime}$.
	
	Let $y=t_{c_{2i-1}}$ for $1\leq{i}\leq{g}$.
	We calculate
	\begin{eqnarray*}
	y(S_{\alpha_{i-1}})&=&S_{\alpha_{i-1}\delta_{i-1}}\overset{(2)}{=}S_{\alpha_{i-1}}S_{\delta_{i-1}},\\
	y^{-1}(S_{\alpha_{i-1}})&=&S_{\alpha_{i-1}\delta_{i-1}^{-1}}\overset{(1),(2)}{=}S_{\alpha_{i-1}}S_{\delta_{i-1}}^{-1},\\
	y(S_{\alpha_i})&=&S_{\delta_{i-1}^{-1}\alpha_i}\overset{(1),(2)}{=}S_{\delta_{i-1}}^{-1}S_{\alpha_i},~
	y^{-1}(S_{\alpha_i})=S_{\delta_{i-1}\alpha_i}\overset{(2)}{=}S_{\delta_{i-1}}S_{\alpha_i},\\
	y^{\pm1}(S_{\beta_{i-1}})&=&S_{\delta_{i-1}^{\mp1}\beta_{i-1}\delta_{i-1}^{\pm1}}\overset{(1),(2)}{=}S_{\delta_{i-1}}^{\mp1}S_{\beta_{i-1}}S_{\delta_{i-1}}^{\pm1}\\
	\end{eqnarray*}
	and $y^{\pm1}(x)=x$ for any other $x\in{X^\prime}$.
	
	Let $y=t_{c_{2i}}$ for $1\leq{i}\leq{g}$.
	We calculate
	$$
	y(S_{\beta_i})=S_{\beta_i\alpha_i}\overset{(2)}{=}S_{\beta_i}S_{\alpha_i},
	y^{-1}(S_{\beta_i})=S_{\beta_i\alpha_i^{-1}}\overset{(1),(2)}{=}S_{\beta_i}S_{\alpha_i}^{-1}
	$$
	and $y^{\pm1}(x)=x$ for any other $x\in{X^\prime}$.

	Let $y=t_{d_k}$ for $1\leq{k}\leq{n}$.
	We calculate
	\begin{eqnarray*}
	y(S_{\alpha_g})&=&S_{\alpha_g\epsilon_k}\overset{(2)}{=}S_{\alpha_g}S_{\epsilon_k},~
	y^{-1}(S_{\alpha_g})=S_{\alpha_g\epsilon_k^{-1}}\overset{(1),(2)}{=}S_{\alpha_g}S_{\epsilon_k}^{-1},\\
	y^{\pm1}(S_{\beta_g})&=&S_{\epsilon_k^{\mp1}\beta_g\epsilon_k^{\pm1}}\overset{(1),(2)}{=}S_{\epsilon_k}^{\mp1}S_{\beta_g}S_{\epsilon_k}^{\pm1},\\
	y^{\pm1}(S_{\gamma_l})&=&S_{\epsilon_k^{\mp1}\gamma_l\epsilon_k^{\pm1}}\overset{(1),(2)}{=}S_{\epsilon_k}^{\mp1}S_{\gamma_l}S_{\epsilon_k}^{\pm1}
	\end{eqnarray*}
	for $l\leq{k}$, and $y^{\pm1}(x)=x$ for any other $x\in{X^\prime}$.
	
	Hence we have that for any $x\in{X^\prime}$ and $y\in{Y}$, $y^{\pm1}(x)$ is in the subgroup of $\pi$ generated by $X^\prime$, by Lemma~\ref{d-e}.
\end{proof}
%%%%%%%%%%%%%%%%%%%%%%%%%%%%%%%%%%%%%%%%%%%%%%%%%%%%%%%%%%%%%%%%%%%%%%%%%%%%%%%%%%%%%%%%%%%%%%%%%%%%
%%%%%%%%%%%%%%%%%%%%%%%%%%%%%%%%%%%%%%%%%%%%%%%%%%%%%%%%%%%%%%%%%%%%%%%%%%%%%%%%%%%%%%%%%%%%%%%%%%%%

%%%%%%%%%%%%%%%%%%%%%%%%%%%%%%%%%%%%%%%%%%%%%%%%%%%%%%%%%%%%%%%%%%%%%%%%%%%%%%%%%%%%%%%%%%%%%%%%%%%%
%%%%%%%%%%%%%%%%%%%%%%%%%%%%%%%%%%%%%%%%%%%%%%%%%%%%%%%%%%%%%%%%%%%%%%%%%%%%%%%%%%%%%%%%%%%%%%%%%%%%
\begin{proof}[Proof of Theorem~\ref{main-1} of the case where $S$ is orientable]
By Lemma~\ref{main-lem} and Proposition~\ref{1}, it follows that $\pi$ is generated by $X^\prime$.
There is a natural map $\pi\to\pi_1(\Sigma_{g,n},\ast)$.
The relations~(1) and (2) of $\pi$ are satisfied in $\pi_1(\Sigma_{g,n},\ast)$ clearly.
Hence the map is a homomorphism.
In addition, the relation $[S_{\alpha_1},S_{\beta_1}]\cdots[S_{\alpha_g},S_{\beta_g}]=1$ is obtained from the relation~(2) of $\pi$ for $n=0$.
Therefore the map is an isomorphism for any $n\geq0$.
Thus we complete the proof.
\end{proof}
%%%%%%%%%%%%%%%%%%%%%%%%%%%%%%%%%%%%%%%%%%%%%%%%%%%%%%%%%%%%%%%%%%%%%%%%%%%%%%%%%%%%%%%%%%%%%%%%%%%%
%%%%%%%%%%%%%%%%%%%%%%%%%%%%%%%%%%%%%%%%%%%%%%%%%%%%%%%%%%%%%%%%%%%%%%%%%%%%%%%%%%%%%%%%%%%%%%%%%%%%

%%%%%%%%%%%%%%%%%%%%%%%%%%%%%%%%%%%%%%%%%%%%%%%%%%%%%%%%%%%%%%%%%%%%%%%%%%%%%%%%%%%%%%%%%%%%%%%%%%%%
%%%%%%%%%%%%%%%%%%%%%%%%%%%%%%%%%%%%%%%%%%%%%%%%%%%%%%%%%%%%%%%%%%%%%%%%%%%%%%%%%%%%%%%%%%%%%%%%%%%%
%%%%%%%%%%%%%%%%%%%%%%%%%%%%%%%%%%%%%%%%%%%%%%%%%%%%%%%%%%%%%%%%%%%%%%%%%%%%%%%%%%%%%%%%%%%%%%%%%%%%
%%%%%%%%%%%%%%%%%%%%%%%%%%%%%%%%%%%%%%%%%%%%%%%%%%%%%%%%%%%%%%%%%%%%%%%%%%%%%%%%%%%%%%%%%%%%%%%%%%%%
%%%%%%%%%%%%%%%%%%%%%%%%%%%%%%%%%%%%%%%%%%%%%%%%%%%%%%%%%%%%%%%%%%%%%%%%%%%%%%%%%%%%%%%%%%%%%%%%%%%%
\subsection{The case where $S$ is non-orientable}\label{non-ori}\

Let $x_1,\dots,x_g$ and $y_1,\dots,y_{n-1}$ be oriented simple loops of $N_{g,n}$ based at $\ast$, as shown in Figure~\ref{gen-pi_1-non-ori-surf}.
It is well known that $\pi_1(N_{g,n},\ast)$ is the free group freely generated by these loops for $n\geq1$ and the group generated by $x_1,\dots,x_g$ which has one relation $x_1^2\cdots{}x_g^2=1$ for $n=0$.

Let $X$ be a set consisting of $S_\alpha$, where $\alpha$ is a one-sided simple loop whose complement is non-orientable, or a separating simple loop which bounds the $m$-th boundary component for $1\leq{m}\leq{n-1}$, and let $X^\prime$ be the following subset of $X$:
$$X^\prime=\{S_{x_1},\dots,S_{x_g},S_{y_1},\dots,S_{y_{n-1}}\}.$$
Let $Y$ be the generating set for $\PM(N_{g,n+1})$ given in Theorem~\ref{gen-PMF}.
In the actions on $\pi_1(N_{g,n},\ast)$ and $\pi$ by $\PM(N_{g,n+1})$, we regard the $(n+1)$-st boundary component of $N_{g,n+1}$ as $\ast$.
We define $f(S_\alpha)=S_{f_\sharp(\alpha)}$ for $S_\alpha\in\pi$ and $f\in\PM(N_{g,n+1})$, where $f_\sharp$ is the map on $\pi_1(N_{g,n},\ast)$ induced from $f$.
We prove the following proposition.

%%%%%%%%%%%%%%%%%%%%%%%%%%%%%%%%%%%%%%%%%%%%%%%%%%%%%%%%%%%%%%%%%%%%%%%%%%%%%%%%%%%%%%%%%%%%%%%%%%%%
%%%%%%%%%%%%%%%%%%%%%%%%%%%%%%%%%%%%%%%%%%%%%%%%%%%%%%%%%%%%%%%%%%%%%%%%%%%%%%%%%%%%%%%%%%%%%%%%%%%%
\begin{prop}\label{2}
\begin{enumerate}
\item	$X$ generates $\pi$.
\item	$\PM(N_{g,n+1})(X^\prime)=X$.
\item	For any $x\in{X^\prime}$ and $y\in{Y}$, $y^{\pm1}(x)$ is in the subgroup of $\pi$ generated by $X^\prime$.
\end{enumerate}
\end{prop}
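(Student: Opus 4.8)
The plan is to follow the structure of the orientable case (Proposition~\ref{1}) almost verbatim, since the three parts are of the same nature. First I would prove part (1): given a generator $S_\alpha$ of $\pi$ where $\alpha$ is a nontrivial simple loop, I must write $S_\alpha$ as a product of symbols from $X$. If $\alpha$ is one sided, I would cut along $\alpha$; the complement is either orientable or non orientable. If it is non orientable, $S_\alpha\in X$ by definition. If $\alpha$ is one sided with orientable complement, or if $\alpha$ is two sided (either separating or non separating), one has to express $S_\alpha$ in terms of the $S_{x_i}$ and the boundary-parallel symbols $S_{y_k}$ using relation~(2); concretely, placing $\alpha$ in a normal position relative to the crosscaps (analogous to Figure~\ref{normal-position-loop-ori-surf}) lets one read off $\alpha$ as a word $x_{i_1}^{\pm2}\cdots$ or $x_{i_1}^2\cdots x_{i_h}^2 y_1\cdots y_m$ in $\pi_1(N_{g,n},\ast)$, and the same word in the $S$-symbols — all of whose factors lie in $X$ — equals $S_\alpha$ by relation~(2). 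The subtlety here, absent from the orientable case, is that a two sided loop $x_i^2$ is \emph{not} itself in $X'$, so I would first need an analogue of Lemma~\ref{d-e} showing $S_{x_i^2}$, $S_{y_n}$, and whatever auxiliary loops appear under the generators of $\PM(N_{g,n+1})$ lie in the subgroup generated by $X'$; this follows immediately from $S_{x_i^2}=S_{x_i}^2$ and $S_{y_n}=S_{(x_1^2\cdots x_g^2 y_1\cdots y_{n-1})^{-1}}=(S_{x_1}^2\cdots S_{x_g}^2 S_{y_1}\cdots S_{y_{n-1}})^{-1}$ via relations (1),(2).

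For part (2), I would use the change-of-coordinates principle for $N_{g,n+1}$: any one sided simple loop with non orientable complement can be carried to $x_1$ by an element $f$ of $\PM(N_{g,n+1})$, whence $f(S_{x_1})=S_{f(x_1)}=S_\alpha$; similarly any loop bounding the $m$-th boundary component is the image of $y_m$ under some $f\in\PM(N_{g,n+1})$ (for $1\le m\le n-1$). This uses that the mapping class group acts transitively on the relevant isotopy classes of loops, which is standard — the two topological invariants (one sidedness, orientability of the complement, boundary component bounded) are exactly what determines the orbit, and they match the definition of $X$.

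For part (3), I would run through the finite generating set $Y=\{t_{a_1},\dots,t_{a_{g-1}},t_b,Y_{\mu,a_1},B_{r_0},B_{r_1},\dots,B_{r_n},t_{s_{kl}}\}$ from Theorem~\ref{gen-PMF} and compute $y^{\pm1}(x)$ for each $x\in X'=\{S_{x_1},\dots,S_{x_g},S_{y_1},\dots,S_{y_{n-1}}\}$, exactly as in the display calculations of Proposition~\ref{1}(3). Each generator of $\PM(N_{g,n+1})$ acts on the basis loops $x_i,y_k$ of $\pi_1(N_{g,n},\ast)$ by an explicit formula (a Dehn twist or push changes a loop by inserting a conjugating word built from nearby basis loops; the boundary pushing maps $B_{r_k}$ act by the Birman exact sequence relation), so $y^{\pm1}(x)=S_w$ for some word $w$ in $x_1,\dots,x_g,y_1,\dots,y_{n-1}$ and their inverses and the $x_i^2,y_n$, and relations (1),(2) (and here possibly (3), the conjugation relation, which is genuinely needed because crosscap pushing maps conjugate by a \emph{one sided} loop $x_1$ — this is the one place the extra relation of $\pi^+$-type reasoning enters, though for $\pi$ we only have (1),(2), so I would check that the conjugators appearing are two sided or that the result reduces via (2) alone) unwind the product into a word in $X'$. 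Having established (1)–(3), Lemma~\ref{main-lem} applied to $G=\pi$, $H=\PM(N_{g,n+1})$, $X$, $X'$, $Y$ gives that $X'$ generates $\pi$, which is what Proposition~\ref{2} asserts in spirit and what the subsequent proof of Theorem~\ref{main-1} (non orientable case) will need.

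The main obstacle I anticipate is part (3): unlike the orientable case, the crosscap pushing map $Y_{\mu,a_1}$ and especially the boundary pushing maps $B_{r_k}$ have less transparent actions on $\pi_1$, and one must be careful that the resulting words, after simplification by relations (1) and (2), really do lie in the subgroup generated by $X'$ rather than merely in the subgroup generated by the larger set $X$ — this is precisely where Lemma~\ref{d-e}'s non orientable analogue (handling $S_{x_i^2}$ and $S_{y_n}$) is invoked, and where I would spend the most care tracking the conjugating loops produced by each push. A secondary, more bookkeeping obstacle is verifying the normal-form statement in part (1) for one sided loops with orientable complement, a case with no counterpart in Subsection~\ref{ori}.
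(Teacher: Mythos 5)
Your proposal follows the paper's own argument in all three parts: the same case analysis of the complement of $\alpha$ with expansion via relation (2) for part (1), the change-of-coordinates transitivity argument for part (2), and the generator-by-generator computation of the $\PM(N_{g,n+1})$-action on $X^\prime$ together with the auxiliary lemma for $S_{y_n}$ (the paper's Lemma~\ref{y_n}) for part (3); your worry about needing relation (3) is resolved exactly as you anticipate, since in the paper every conjugation splits into products of simple loops handled by relations (1) and (2) alone. So the proposal is correct and essentially identical in approach to the paper's proof.
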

%%%%%%%%%%%%%%%%%%%%%%%%%%%%%%%%%%%%%%%%%%%%%%%%%%%%%%%%%%%%%%%%%%%%%%%%%%%%%%%%%%%%%%%%%%%%%%%%%%%%
%%%%%%%%%%%%%%%%%%%%%%%%%%%%%%%%%%%%%%%%%%%%%%%%%%%%%%%%%%%%%%%%%%%%%%%%%%%%%%%%%%%%%%%%%%%%%%%%%%%%

In order to prove the proposition, we show the following lemma.

%%%%%%%%%%%%%%%%%%%%%%%%%%%%%%%%%%%%%%%%%%%%%%%%%%%%%%%%%%%%%%%%%%%%%%%%%%%%%%%%%%%%%%%%%%%%%%%%%%%%
%%%%%%%%%%%%%%%%%%%%%%%%%%%%%%%%%%%%%%%%%%%%%%%%%%%%%%%%%%%%%%%%%%%%%%%%%%%%%%%%%%%%%%%%%%%%%%%%%%%%
\begin{lem}\label{y_n}
$S_{y_n}$ is in the subgroup of $\pi$ generated by $X^\prime$, where $y_n$ is a simple loop of $N_{g,n}$ as shown in Figure~\ref{gen-pi_1-non-ori-surf}.
\end{lem}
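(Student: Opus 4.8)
The plan is to write $y_n$ explicitly as a word in the free generators $x_1,\dots,x_g,y_1,\dots,y_{n-1}$ of $\pi_1(N_{g,n},\ast)$, and then to translate that word into $\pi$ by repeated use of the defining relations (1) and (2), checking along the way that every partial product that occurs is represented by a non-trivial simple loop, so that relation (2) genuinely applies to it.

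First I would read off from Figure~\ref{gen-pi_1-non-ori-surf} the standard identity
$$x_1^2 x_2^2\cdots x_g^2\,y_1 y_2\cdots y_n=1\qquad\text{in }\pi_1(N_{g,n},\ast),$$
which for $n\ge1$ is exactly the relation that allows one to drop $y_n$ from the generating set, so that
$$y_n=\bigl(x_1^2 x_2^2\cdots x_g^2\,y_1 y_2\cdots y_{n-1}\bigr)^{-1}.$$
The heart of the matter is then the geometric observation that all the partial products involved are non-trivial two-sided simple loops: each $x_i^2$ is the boundary of the M\"obius band that is a regular neighborhood of the one-sided loop $x_i$; each $x_1^2\cdots x_j^2$ for $1\le j\le g$ is the separating curve that cuts off the copy of $N_{j,1}$ spanned by the first $j$ crosscaps together with $\ast$; each $x_1^2\cdots x_g^2\,y_1\cdots y_k$ for $1\le k\le n-1$ is the separating curve that cuts $N_{g,n}$ into a copy of $N_{g,k+1}$ containing $\ast$ and all the crosscaps and a planar surface carrying the remaining $n-k$ boundary components; and $y_n$ itself is the loop around the $n$-th boundary component. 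This simplicity check is the step I expect to require the most care, since relation (2) of $\pi$ may be invoked only when the three loops $\alpha$, $\beta$, $\gamma$ are all non-trivial simple loops.

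With these facts in place the rest is a routine computation: peeling off one factor at a time — first $S_{x_1^2\cdots x_j^2}=S_{x_1^2\cdots x_{j-1}^2}S_{x_j^2}$ and $S_{x_j^2}=S_{x_j}S_{x_j}$, and then $S_{x_1^2\cdots x_g^2 y_1\cdots y_k}=S_{x_1^2\cdots x_g^2 y_1\cdots y_{k-1}}S_{y_k}$, each step being a legitimate instance of relation (2) by the previous paragraph — together with one application of relation (1) yields
$$S_{y_n}=S_{(x_1^2\cdots x_g^2 y_1\cdots y_{n-1})^{-1}}\overset{(1)}{=}S_{x_1^2\cdots x_g^2 y_1\cdots y_{n-1}}^{-1}\overset{(2)}{=}\bigl(S_{x_1}^2 S_{x_2}^2\cdots S_{x_g}^2\,S_{y_1}\cdots S_{y_{n-1}}\bigr)^{-1}.$$
Since every symbol on the right-hand side lies in $X^\prime$, this exhibits $S_{y_n}$ as an element of the subgroup of $\pi$ generated by $X^\prime$, as asserted.
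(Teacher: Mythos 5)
Your proposal is correct and follows essentially the same route as the paper: express $y_n$ as $(x_1^2\cdots x_g^2y_1\cdots y_{n-1})^{-1}$ via the surface relation and expand with relations (1) and (2) to get $S_{y_n}=(S_{x_1}^2\cdots S_{x_g}^2S_{y_1}\cdots S_{y_{n-1}})^{-1}$, whose factors lie in $X^\prime$. Your extra verification that each partial product is represented by a non-trivial simple loop is exactly the detail the paper leaves implicit, so nothing is missing.
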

%%%%%%%%%%%%%%%%%%%%%%%%%%%%%%%%%%%%%%%%%%%%%%%%%%%%%%%%%%%%%%%%%%%%%%%%%%%%%%%%%%%%%%%%%%%%%%%%%%%%
%%%%%%%%%%%%%%%%%%%%%%%%%%%%%%%%%%%%%%%%%%%%%%%%%%%%%%%%%%%%%%%%%%%%%%%%%%%%%%%%%%%%%%%%%%%%%%%%%%%%

%%%%%%%%%%%%%%%%%%%%%%%%%%%%%%%%%%%%%%%%%%%%%%%%%%%%%%%%%%%%%%%%%%%%%%%%%%%%%%%%%%%%%%%%%%%%%%%%%%%%
%%%%%%%%%%%%%%%%%%%%%%%%%%%%%%%%%%%%%%%%%%%%%%%%%%%%%%%%%%%%%%%%%%%%%%%%%%%%%%%%%%%%%%%%%%%%%%%%%%%%
\begin{proof}
By the relations (1) and (2) of $\pi$, we calculate
$$
S_{y_n}=S_{(x_1^2\cdots{}x_g^2y_1\cdots{}y_{n-1})^{-1}}
=(S_{x_1}^2\cdots{}S_{x_g}^2S_{y_1}\cdots{}S_{y_{n-1}})^{-1}.
$$
Since each symbol of the right hand side is in $X^\prime$, we get the claim.
\end{proof}
%%%%%%%%%%%%%%%%%%%%%%%%%%%%%%%%%%%%%%%%%%%%%%%%%%%%%%%%%%%%%%%%%%%%%%%%%%%%%%%%%%%%%%%%%%%%%%%%%%%%
%%%%%%%%%%%%%%%%%%%%%%%%%%%%%%%%%%%%%%%%%%%%%%%%%%%%%%%%%%%%%%%%%%%%%%%%%%%%%%%%%%%%%%%%%%%%%%%%%%%%

%%%%%%%%%%%%%%%%%%%%%%%%%%%%%%%%%%%%%%%%%%%%%%%%%%%%%%%%%%%%%%%%%%%%%%%%%%%%%%%%%%%%%%%%%%%%%%%%%%%%
%%%%%%%%%%%%%%%%%%%%%%%%%%%%%%%%%%%%%%%%%%%%%%%%%%%%%%%%%%%%%%%%%%%%%%%%%%%%%%%%%%%%%%%%%%%%%%%%%%%%
\begin{proof}[Proof of Proposition~\ref{2}]
(1)	For any generator $S_\alpha$ of $\pi$, the complement of $\alpha$ is homeomorphic to either
	\begin{enumerate}
	\item	$N_{g-1,n+1}$,
	\item	$N_{g-2,n+2}$,
	\item	$\Sigma_{h,n+r}$ if $g=2h+r$ for $r=1$, $2$,
	\item	$N_{h,m+1}\sqcup{N_{g-h,n-m+1}}$ for $1\leq{h}\leq{g-1}$ and $0\leq{m}\leq{n}$ or
	\item	$\Sigma_{h,m+1}\sqcup{N_{g-2h,n-m+1}}$ for $\displaystyle0\leq{h}\leq\frac{g-1}{2}$ and $0\leq{m}\leq{n}$
	\end{enumerate}
	(see \cite{S1}).
	Therefore, there is $f\in\PM(N_{g,n+1})$ such that $\alpha=f_\sharp(\beta)$, where $\beta$ is either one of the simple loops as in Figure~\ref{normal-position-loop-non-ori-surf}.
	For the case (a), we have $S_\alpha=S_{f_\sharp(x_1)}$.
	For the case (b), by the relation (2) of $\pi$, we have $S_\alpha=S_{f_\sharp(x_1)}S_{f_\sharp(x_2)}$.
	For the cases (c), by the relation (2) of $\pi$, we have $S_\alpha=S_{f_\sharp(x_1)}\cdots{}S_{f_\sharp(x_g)}$.
	For the case (d), by the relation (2) of $\pi$, we have $S_\alpha=S_{f_\sharp(x_1)}^2\cdots{}S_{f_\sharp(x_h)}^2S_{f_\sharp(y_{k_1})}\cdots{}S_{f_\sharp(y_{k_m})}$ for some $1\leq{k_1<\cdots<k_m}\leq{n}$.
	For the case (e), by the relations (1) and (2) of $\pi$, we have
	\begin{eqnarray*}
	S_\alpha
	&=&
	S_{f_\sharp(x_1)}\cdots{}S_{f_\sharp(x_{2h})}S_{f_\sharp(x_{2h+1})}^{-1}S_{f_\sharp(x_{2h})}^{-2}\cdots{}S_{f_\sharp(x_{2})}^{-2}S_{f_\sharp(x_{1})}^{-1}\\
	&&
	~\cdot~S_{f_\sharp(x_{2})}\cdots{}S_{f_\sharp(x_{2h+1})}S_{f_\sharp(y_{k_1})}\cdots{}S_{f_\sharp(y_{k_m})}
	\end{eqnarray*}
	for some $1\leq{k_1<\cdots<k_m}\leq{n}$ if $h\neq0$.
	If $h=0$, by the relation (2) of $\pi$, we have $S_\alpha=S_{f_\sharp(y_{k_1})}\cdots{}S_{f_\sharp(y_{k_m})}$ for some $1\leq{k_1<\cdots<k_m}\leq{n}$.
	Since each symbol of the right hand sides is in $X$, we conclude that $X$ generates $\pi$.
	%%%%%%%%%%%%%%%%%%%%%%%%%%%%%%%%%%%%%%%%%%%%%%%%%%%%%%%%%%%%%%%%%%%%%%%%%%%%%%%%%%%%%%%%%%%%%%%%%%%%
	\begin{figure}[htbp]
	\subfigure[A standard position oriented simple loop whose complement is homeomorphic to $N_{g-1,n+1}$.]{\includegraphics[scale=0.5]{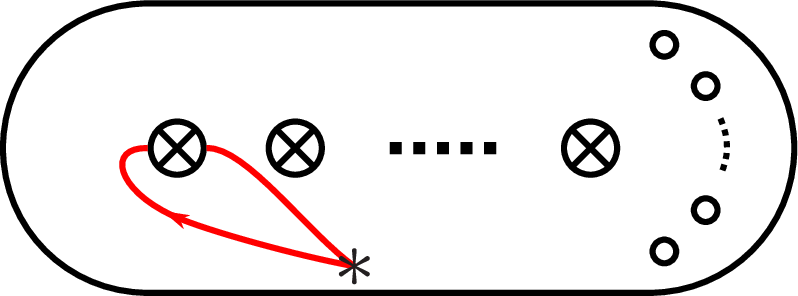}}
	\subfigure[A standard position oriented simple loop whose complement is homeomorphic to $N_{g-2,n+2}$.]{\includegraphics[scale=0.5]{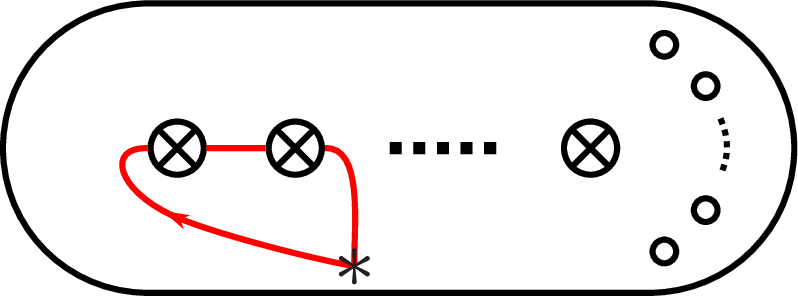}}
	\subfigure[A standard position oriented simple loop whose complement is homeomorphic to $\Sigma_{h,n+r}$ if $g=2h+r$ for $r=1$, $2$.]{\includegraphics[scale=0.5]{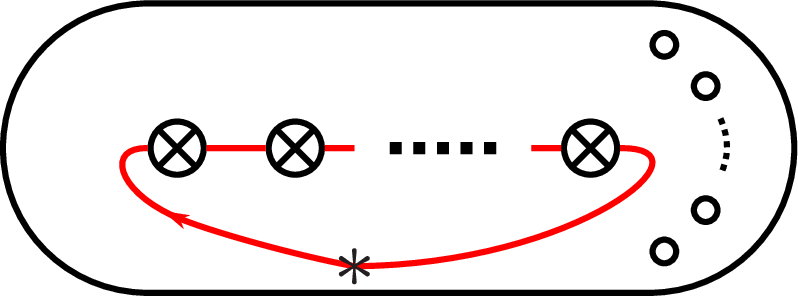}}
	\subfigure[A standard position oriented simple loop whose complement is homeomorphic to $N_{h,m+1}\sqcup{N_{g-h,n-m+1}}$ for $1\leq{h}\leq{g-1}$ and $0\leq{m}\leq{n}$.]{\includegraphics[scale=0.5]{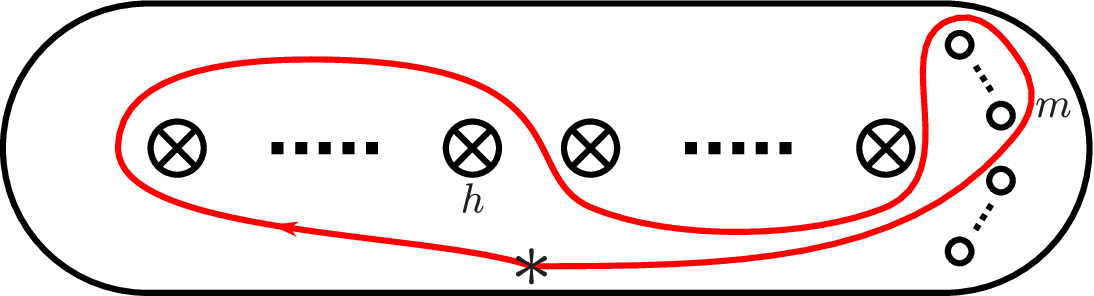}}
	\subfigure[A standard position oriented simple loop whose complement is homeomorphic to $\Sigma_{h,m+1}\sqcup{N_{g-2h,n-m+1}}$ for $\displaystyle0\leq{h}\leq\frac{g-1}{2}$ and $0\leq{m}\leq{n}$.]{\includegraphics[scale=0.5]{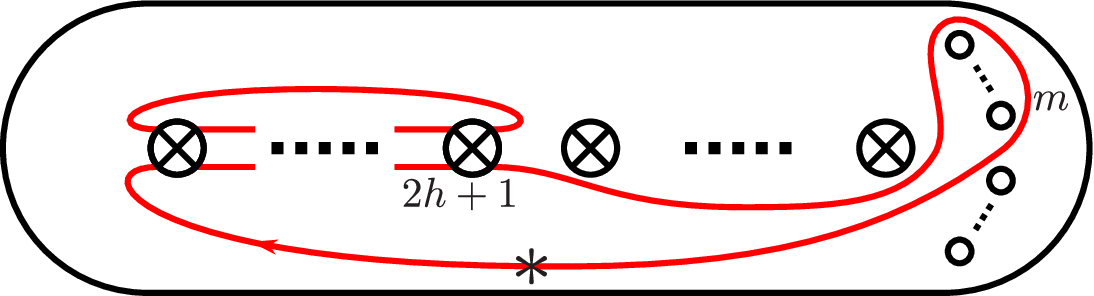}}
	\caption{}\label{normal-position-loop-non-ori-surf}
	\end{figure}
	%%%%%%%%%%%%%%%%%%%%%%%%%%%%%%%%%%%%%%%%%%%%%%%%%%%%%%%%%%%%%%%%%%%%%%%%%%%%%%%%%%%%%%%%%%%%%%%%%%%%
	
(2)	For any $S_\alpha\in{X}$, if $\alpha$ is a one-sided simple loop whose complement is non-orientable, there is $f\in\PM(N_{g,n+1})$ such that $f_\sharp(x_1)=\alpha$, and hence $f(S_{x_1})=S_\alpha$.
	If $\alpha$ is a separating simple loop which bounds the $m$-th boundary component for $1\leq{m}\leq{n-1}$, there is $f\in\PM(N_{g,n+1})$ such that $f_\sharp(y_m)=\alpha$, and hence $f(S_{y_m})=S_\alpha$.
	Therefore we obtain the claim.
	
(3)	In this proof, we omit details of calculations.
	
	Let $y=t_{a_i}$ for $1\leq{i}\leq{g}$.
	We calculate
	\begin{eqnarray*}
	y(S_{x_i})&=&S_{x_ix_{i+1}^{-1}x_i^{-1}}\overset{(1),(2)}{=}S_{x_i}S_{x_{i+1}}^{-1}S_{x_i}^{-1},\\
	y^{-1}(S_{x_i})&=&S_{x_i^2x_{i+1}}\overset{(2)}{=}S_{x_i}^2S_{x_{i+1}},\\
	y(S_{x_{i+1}})&=&S_{x_ix_{i+1}^2}\overset{(2)}{=}S_{x_i}S_{x_{i+1}}^2,\\
	y^{-1}(S_{x_{i+1}})&=&S_{x_{i+1}^{-1}x_i^{-1}x_{i+1}}\overset{(1),(2)}{=}S_{x_{i+1}}^{-1}S_{x_i}^{-1}S_{x_{i+1}}
	\end{eqnarray*}
	and $y^{\pm1}(x)=x$ for any other $x\in{X^\prime}$.
	
	Let $y=t_{b}$.
	We calculate
	\begin{eqnarray*}
	y(S_{x_1})&=&S_{x_1x_2x_3x_4^{-1}x_3^{-2}x_2^{-2}x_1^{-1}}\overset{(1),(2)}{=}S_{x_1}S_{x_2}S_{x_3}S_{x_4}^{-1}S_{x_3}^{-2}S_{x_2}^{-2}S_{x_1}^{-1},\\
	y^{-1}(S_{x_1})&=&S_{x_1^2x_2^2x_3^2x_4x_3^{-1}x_2^{-1}}\overset{(1),(2)}{=}S_{x_1}^2S_{x_2}^2S_{x_3}^2S_{x_4}S_{x_3}^{-1}S_{x_2}^{-1},\\
	y(S_{x_2})&=&S_{x_1x_2^2x_3^2x_4x_3^{-1}}\overset{(1),(2)}{=}S_{x_1}S_{x_2}^2S_{x_3}^2S_{x_4}S_{x_3}^{-1},\\
	y^{-1}(S_{x_2})&=&S_{x_2x_3x_4^{-1}x_3^{-2}x_2^{-2}x_1^{-1}x_2}\overset{(1),(2)}{=}S_{x_2}S_{x_3}S_{x_4}^{-1}S_{x_3}^{-2}S_{x_2}^{-2}S_{x_1}^{-1}S_{x_2},\\
	y(S_{x_3})&=&S_{x_3x_4^{-1}x_3^{-2}x_2^{-2}x_1^{-1}x_2x_3}\overset{(1),(2)}{=}S_{x_3}S_{x_4}^{-1}S_{x_3}^{-2}S_{x_2}^{-2}S_{x_1}^{-1}S_{x_2}S_{x_3},\\
	y^{-1}(S_{x_3})&=&S_{x_2^{-1}x_1x_2^2x_3^2x_4}\overset{(1),(2)}{=}S_{x_2}^{-1}S_{x_1}S_{x_2}^2S_{x_3}^2S_{x_4},\\
	y(S_{x_4})&=&S_{x_3^{-1}x_2^{-1}x_1x_2^2x_3^2x_4^2}\overset{(1),(2)}{=}S_{x_3}^{-1}S_{x_2}^{-1}S_{x_1}S_{x_2}^2S_{x_3}^2S_{x_4}^2,\\
	y^{-1}(S_{x_4})&=&S_{x_4^{-1}x_3^{-2}x_2^{-2}x_1^{-1}x_2x_3x_4}\overset{(1),(2)}{=}S_{x_4}^{-1}S_{x_3}^{-2}S_{x_2}^{-2}S_{x_1}^{-1}S_{x_2}S_{x_3}S_{x_4}
	\end{eqnarray*}
	and $y^{\pm1}(x)=x$ for any other $x\in{X^\prime}$.
	
	Let $y=Y_{\mu,a_1}$.
	We calculate
	\begin{eqnarray*}
	y(S_{x_1})&=&S_{x_1^2x_2x_1^{-1}x_2^{-1}x_1^{-2}}\overset{(1),(2)}{=}S_{x_1}^2S_{x_2}S_{x_1}^{-1}S_{x_2}^{-1}S_{x_1}^{-2},\\
	y^{-1}(S_{x_1})&=&S_{x_2^{-1}x_1^{-1}x_2}\overset{(1),(2)}{=}S_{x_2}^{-1}S_{x_1}^{-1}S_{x_2},\\
	y(S_{x_2})&=&S_{x_1^2x_2}\overset{(2)}{=}S_{x_1}^2S_{x_2},~
	y^{-1}(S_{x_2})=S_{x_2^{-1}x_1^2x_2^2}\overset{(1), (2)}{=}S_{x_2}^{-1}S_{x_1}^2S_{x_2}^2
	\end{eqnarray*}
	and $y^{\pm1}(x)=x$ for any other $x\in{X^\prime}$.
	
	Let $y=B_{r_k}$ for $1\leq{k}\leq{n}$.
	We calculate
	\begin{eqnarray*}
	y(S_{x_g})&=&S_{x_g^2y_kx_g^{-1}}\overset{(1),(2)}{=}S_{x_g}^2S_{y_k}S_{x_g}^{-1},~
	y^{-1}(S_{x_g})=S_{x_gy_k}\overset{(2)}{=}S_{x_g}S_{y_k},\\
	y(S_{y_k})&=&S_{x_gy_kx_g^{-1}}\overset{(1),(2)}{=}S_{x_g}S_{y_k}S_{x_g}^{-1},\\
	y^{-1}(S_{y_k})&=&S_{y_k^{-1}x_g^{-1}y_kx_gy_k}\overset{(1),(2)}{=}S_{y_k}^{-1}S_{x_g}^{-1}S_{y_k}S_{x_g}S_{y_k},\\
	y(S_{y_l})&=&S_{x_gy_k^{-1}x_g^{-1}y_k^{-1}y_ly_kx_gy_kx_g^{-1}}\overset{(1),(2)}{=}S_{x_g}S_{y_k}^{-1}S_{x_g}^{-1}S_{y_k}^{-1}S_{y_l}S_{y_k}S_{x_g}S_{y_k}S_{x_g}^{-1},\\
	y^{-1}(S_{y_l})&=&S_{y_k^{-1}x_g^{-1}y_k^{-1}x_gy_lx_g^{-1}y_kx_gy_k}\overset{(1),(2)}{=}S_{y_k}^{-1}S_{x_g}^{-1}S_{y_k}^{-1}S_{x_g}S_{y_l}S_{x_g}^{-1}S_{y_k}S_{x_g}S_{y_k}
	\end{eqnarray*}
	for $l<k$, and $y^{\pm1}(x)=x$ for any other $x\in{X^\prime}$.
	
	Let $y=B_{r_0}$.
	We calculate
	\begin{eqnarray*}
	y^{\pm1}(S_{x_j})&=&S_{x_g^{\mp1}x_jx_g^{\pm1}}\overset{(1), (2)}{=}S_{x_g}^{\mp1}S_{x_j}S_{x_g}^{\pm1},\\
	y^{\pm1}(S_{y_l})&=&S_{x_g^{\mp1}y_lx_g^{\pm1}}\overset{(1), (2)}{=}S_{x_g}^{\mp1}S_{y_l}S_{x_g}^{\pm1}
	\end{eqnarray*}
	for $1\leq{j}\leq{g}$ and $1\leq{l}\leq{n-1}$.
	
	Let $y=t_{s_{kl}}$ for $1\leq{k<l}\leq{n}$.
	We calculate
	\begin{eqnarray*}
	y(S_{y_{k}})&=&S_{y_ky_ly_ky_l^{-1}y_k^{-1}}\overset{(1),(2)}{=}S_{y_k}S_{y_l}S_{y_k}S_{y_l}^{-1}S_{y_k}^{-1},\\
	y^{-1}(S_{y_{k}})&=&S_{y_l^{-1}y_ky_l}\overset{(1),(2)}{=}S_{y_l}^{-1}S_{y_k}S_{y_l},\\
	y(S_{y_{l}})&=&S_{y_ky_ly_k^{-1}}\overset{(1),(2)}{=}S_{y_k}S_{y_l}S_{y_k}^{-1},\\
	y^{-1}(S_{y_{l}})&=&S_{y_l^{-1}y_k^{-1}y_ly_ky_l}\overset{(1),(2)}{=}S_{y_l}^{-1}S_{y_k}^{-1}S_{y_l}S_{y_k}S_{y_l},\\
	y(S_{y_{m}})&=&S_{[y_k,y_l]y_m[y_k,y_l]^{-1}}\overset{(1),(2)}{=}[S_{y_k},S_{y_l}]S_{y_m}[S_{y_k},S{y_l}]^{-1},\\
	y^{-1}(S_{y_{m}})&=&S_{[y_l^{-1},y_k^{-1}]y_m[y_l^{-1},y_k^{-1}]^{-1}}\overset{(1),(2)}{=}[S_{y_l}^{-1},S_{y_k}^{-1}]S_{y_m}[S_{y_l}^{-1},S_{y_k}^{-1}]^{-1}
	\end{eqnarray*}
	for $k<m<l$, and $y^{\pm1}(x)=x$ for any other $x\in{X^\prime}$.
		
	Hence we have that for any $x\in{X^\prime}$ and $y\in{Y}$, $y^{\pm1}(x)$ is in the subgroup of $\pi$ generated by $X^\prime$, by Lemma~\ref{y_n}.
\end{proof}
%%%%%%%%%%%%%%%%%%%%%%%%%%%%%%%%%%%%%%%%%%%%%%%%%%%%%%%%%%%%%%%%%%%%%%%%%%%%%%%%%%%%%%%%%%%%%%%%%%%%
%%%%%%%%%%%%%%%%%%%%%%%%%%%%%%%%%%%%%%%%%%%%%%%%%%%%%%%%%%%%%%%%%%%%%%%%%%%%%%%%%%%%%%%%%%%%%%%%%%%%

%%%%%%%%%%%%%%%%%%%%%%%%%%%%%%%%%%%%%%%%%%%%%%%%%%%%%%%%%%%%%%%%%%%%%%%%%%%%%%%%%%%%%%%%%%%%%%%%%%%%
%%%%%%%%%%%%%%%%%%%%%%%%%%%%%%%%%%%%%%%%%%%%%%%%%%%%%%%%%%%%%%%%%%%%%%%%%%%%%%%%%%%%%%%%%%%%%%%%%%%%
\begin{proof}[Proof of Theorem~\ref{main-1} of the case where $S$ is non-orientable]
By Lemma~\ref{main-lem} and Proposition~\ref{2}, it follows that $\pi$ is generated by $X^\prime$.
There is a natural map $\pi\to\pi_1(N_{g,n},\ast)$.
The relations~(1) and (2) of $\pi$ are satisfied in $\pi_1(N_{g,n},\ast)$ clearly.
Hence the map is a homomorphism.
In addition, the relation $S_{x_1}^2\cdots{}S_{x_g}^2=1$ is obtained from the relation~(2) of $\pi$ for $n=0$.
Therefore the map is an isomorphism for any $n\geq0$.
Thus we complete the proof.
\end{proof}
%%%%%%%%%%%%%%%%%%%%%%%%%%%%%%%%%%%%%%%%%%%%%%%%%%%%%%%%%%%%%%%%%%%%%%%%%%%%%%%%%%%%%%%%%%%%%%%%%%%%
%%%%%%%%%%%%%%%%%%%%%%%%%%%%%%%%%%%%%%%%%%%%%%%%%%%%%%%%%%%%%%%%%%%%%%%%%%%%%%%%%%%%%%%%%%%%%%%%%%%%

%%%%%%%%%%%%%%%%%%%%%%%%%%%%%%%%%%%%%%%%%%%%%%%%%%%%%%%%%%%%%%%%%%%%%%%%%%%%%%%%%%%%%%%%%%%%%%%%%%%%
%%%%%%%%%%%%%%%%%%%%%%%%%%%%%%%%%%%%%%%%%%%%%%%%%%%%%%%%%%%%%%%%%%%%%%%%%%%%%%%%%%%%%%%%%%%%%%%%%%%%
%%%%%%%%%%%%%%%%%%%%%%%%%%%%%%%%%%%%%%%%%%%%%%%%%%%%%%%%%%%%%%%%%%%%%%%%%%%%%%%%%%%%%%%%%%%%%%%%%%%%
%%%%%%%%%%%%%%%%%%%%%%%%%%%%%%%%%%%%%%%%%%%%%%%%%%%%%%%%%%%%%%%%%%%%%%%%%%%%%%%%%%%%%%%%%%%%%%%%%%%%
%%%%%%%%%%%%%%%%%%%%%%%%%%%%%%%%%%%%%%%%%%%%%%%%%%%%%%%%%%%%%%%%%%%%%%%%%%%%%%%%%%%%%%%%%%%%%%%%%%%%
%%%%%%%%%%%%%%%%%%%%%%%%%%%%%%%%%%%%%%%%%%%%%%%%%%%%%%%%%%%%%%%%%%%%%%%%%%%%%%%%%%%%%%%%%%%%%%%%%%%%
%%%%%%%%%%%%%%%%%%%%%%%%%%%%%%%%%%%%%%%%%%%%%%%%%%%%%%%%%%%%%%%%%%%%%%%%%%%%%%%%%%%%%%%%%%%%%%%%%%%%
%%%%%%%%%%%%%%%%%%%%%%%%%%%%%%%%%%%%%%%%%%%%%%%%%%%%%%%%%%%%%%%%%%%%%%%%%%%%%%%%%%%%%%%%%%%%%%%%%%%%
%%%%%%%%%%%%%%%%%%%%%%%%%%%%%%%%%%%%%%%%%%%%%%%%%%%%%%%%%%%%%%%%%%%%%%%%%%%%%%%%%%%%%%%%%%%%%%%%%%%%
%%%%%%%%%%%%%%%%%%%%%%%%%%%%%%%%%%%%%%%%%%%%%%%%%%%%%%%%%%%%%%%%%%%%%%%%%%%%%%%%%%%%%%%%%%%%%%%%%%%%
\section{Proof of Theorem~\ref{main-2}}\label{pi^+}

Let $x_{ij}=x_ix_j$ and $z_k=x_gy_kx_g^{-1}$ for $1\leq{i,j}\leq{g}$ and $1\leq{k}\leq{n-1}$, where $x_1,\dots,x_g$ and $y_1,\dots,y_{n-1}$ are simple loops of $N_{g,n}$ as shown in Figure~\ref{gen-pi_1-non-ori-surf}.
We first consider a presentation for $\pi_1^+(N_{g,n},\ast)$ as follows.

%%%%%%%%%%%%%%%%%%%%%%%%%%%%%%%%%%%%%%%%%%%%%%%%%%%%%%%%%%%%%%%%%%%%%%%%%%%%%%%%%%%%%%%%%%%%%%%%%%%%
%%%%%%%%%%%%%%%%%%%%%%%%%%%%%%%%%%%%%%%%%%%%%%%%%%%%%%%%%%%%%%%%%%%%%%%%%%%%%%%%%%%%%%%%%%%%%%%%%%%%
\begin{lem}
$\pi_1^+(N_{g,n},\ast)$ is the free group freely generated by $x_{12},\dots,x_{g-1\:g}$, $x_{11},\dots,x_{gg}$, $y_1,\dots,y_{n-1}$ and $z_1,\dots,z_{n-1}$ for $n\geq1$, and the group generated by $x_{12},\dots,x_{g-1\:g}$ and $x_{11},\dots,x_{gg}$ which has two relations $x_{11}\cdots{}x_{gg}=1$ and $x_{gg}x_{g-1\:g}^{-1}x_{g-1\:g-1}x_{g-2\:g-1}^{-1}\cdots{}x_{22}x_{12}^{-1}x_{11}x_{12}\cdots{}x_{g-1\:g}=1$ for $n=0$.
\end{lem}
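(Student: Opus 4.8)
The plan is to identify $\pi_1^+(N_{g,n},\ast)$ with the index-two subgroup $H=\ker(w)$, where $w\colon\pi_1(N_{g,n},\ast)\to\Z/2\Z$ is the orientation character, and then to present $H$ by the Reidemeister--Schreier method and match the outcome with the asserted generators and relations via a Nielsen change of generators. The character $w$ sends a loop to $0$ or $1$ according to whether a regular neighbourhood of a representative is an annulus or a M\"obius band, so $w(x_i)=1$ for all $i$ while $w(y_k)=0$. Since every two-sided simple loop lies in $\ker w$, we have $\pi_1^+\subseteq H$; conversely, each of $x_{ii}=x_i^2$, $x_{i\:i+1}=x_ix_{i+1}$, $y_k$ and $z_k=x_gy_kx_g^{-1}$ is represented by a two-sided simple loop drawn directly on $N_{g,n}$, and since (by the computation below) these elements generate $H$, also $H\subseteq\pi_1^+$. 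So it suffices to establish the asserted presentation for $H$.

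For $n\ge1$ I would apply Reidemeister--Schreier to the free group $\pi_1(N_{g,n},\ast)=\langle x_1,\dots,x_g,y_1,\dots,y_{n-1}\rangle$ using the Schreier transversal $\{1,x_g\}$ for $H$. A routine rewriting shows that $H$ is free on the $2g+2n-3$ elements
$$a_i=x_ix_g^{-1}\ (1\le i\le g-1),\qquad b=x_g^2,\qquad c_i=x_gx_i\ (1\le i\le g-1),\qquad y_k,\ z_k\ (1\le k\le n-1).$$
Expressing the asserted generators in this basis one computes $x_{gg}=b$, $x_{ii}=a_ic_i$ for $1\le i\le g-1$, $x_{g-1\:g}=a_{g-1}b$, and $x_{i\:i+1}=a_ic_{i+1}$ for $1\le i\le g-2$, while $y_k,z_k$ are unchanged. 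This substitution is invertible over the free group: set $b=x_{gg}$, $a_{g-1}=x_{g-1\:g}x_{gg}^{-1}$, $c_{g-1}=a_{g-1}^{-1}x_{g-1\:g-1}$, and descending through $i=g-2,\dots,1$ recover $a_i=x_{i\:i+1}c_{i+1}^{-1}$ and $c_i=a_i^{-1}x_{ii}$. Hence it is a Nielsen transformation, so $x_{12},\dots,x_{g-1\:g},x_{11},\dots,x_{gg},y_1,\dots,y_{n-1},z_1,\dots,z_{n-1}$ is a free basis of $H$, which is the $n\ge1$ statement.

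For $n=0$ I would instead use $\pi_1(N_{g,0},\ast)=\pi_1(N_{g,1},\ast)/\langle\langle r\rangle\rangle$ with $r=x_1^2\cdots x_g^2$. Since $H(N_{g,1})$ has index two in $\pi_1(N_{g,1},\ast)$ with transversal $\{1,x_g\}$ and $r\in H(N_{g,1})$, the standard fact about normal closures and index-two subgroups gives $H(N_{g,0})=H(N_{g,1})/\langle\langle r,\ x_grx_g^{-1}\rangle\rangle_{H(N_{g,1})}$. By the previous paragraph (applied with $n=1$), $H(N_{g,1})$ is free on $x_{12},\dots,x_{g-1\:g},x_{11},\dots,x_{gg}$; in this basis $r=x_1^2\cdots x_g^2=x_{11}x_{22}\cdots x_{gg}$, while a short telescoping computation shows
$$x_{gg}x_{g-1\:g}^{-1}x_{g-1\:g-1}x_{g-2\:g-1}^{-1}\cdots x_{22}x_{12}^{-1}x_{11}x_{12}\cdots x_{g-1\:g}=x_gx_1^2x_2^2\cdots x_{g-1}^2x_g=x_grx_g^{-1}.$$
Substituting these two words for the two relators yields exactly the claimed presentation of $H(N_{g,0})=\pi_1^+(N_{g,0},\ast)$.

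I expect the main obstacle to be bookkeeping rather than anything conceptual: carrying out the Reidemeister--Schreier rewriting cleanly (choosing notation so the free generators come out legibly), verifying that the change of generators is genuinely invertible over the free group (the inductive recovery above), and, in the case $n=0$, the telescoping identification of the second relator, which is short but sensitive to the order of the factors. The identification $\pi_1^+=\ker w$ is routine but should be stated explicitly, since it is what connects the topological definition of $\pi_1^+$ with the algebraic computation.
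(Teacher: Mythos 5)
Your argument is correct and is essentially the paper's: the same Reidemeister--Schreier computation with transversal $\{1,x_g\}$, the same Schreier generators, and the same two rewritten relators $r$ and $x_grx_g^{-1}$ in the case $n=0$. The only difference is packaging -- you pass from the Schreier basis to $x_{12},\dots,x_{g-1\:g},x_{11},\dots,x_{gg},y_k,z_k$ by an explicitly invertible (Nielsen) change of generators and telescope the relators directly, whereas the paper records the same substitution as a pair of mutually inverse homomorphisms $\varphi,\psi$ and checks them on the relators; you also make explicit the identification $\pi_1^+=\ker w$, which the paper handles by citation.
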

%%%%%%%%%%%%%%%%%%%%%%%%%%%%%%%%%%%%%%%%%%%%%%%%%%%%%%%%%%%%%%%%%%%%%%%%%%%%%%%%%%%%%%%%%%%%%%%%%%%%
%%%%%%%%%%%%%%%%%%%%%%%%%%%%%%%%%%%%%%%%%%%%%%%%%%%%%%%%%%%%%%%%%%%%%%%%%%%%%%%%%%%%%%%%%%%%%%%%%%%%

%%%%%%%%%%%%%%%%%%%%%%%%%%%%%%%%%%%%%%%%%%%%%%%%%%%%%%%%%%%%%%%%%%%%%%%%%%%%%%%%%%%%%%%%%%%%%%%%%%%%
%%%%%%%%%%%%%%%%%%%%%%%%%%%%%%%%%%%%%%%%%%%%%%%%%%%%%%%%%%%%%%%%%%%%%%%%%%%%%%%%%%%%%%%%%%%%%%%%%%%%
\begin{proof}
It is known that $\pi_1^+(N_{g,n},\ast)$ is an index two subgroup of $\pi_1(N_{g,n},\ast)$ (see \cite{KO1}).
Hence we can obtain a presentation of $\pi_1^+(N_{g,n},\ast)$ by the Reidemeister Schreier method (for details, for instance see \cite{J}).
Note that $\pi_1(N_{g,n},\ast)$ is generated by $x_1,\dots,x_g$ and $y_1,\dots,y_{n-1}$.
We chose $\{1,x_g\}$ as a Schreier transversal for $\pi_1^+(N_{g,n},\ast)$ in $\pi_1(N_{g,n},\ast)$.
Then it follows that $\pi_1^+(N_{g,n},\ast)$ is generated by $x_1x_g^{-1},\dots,x_{g-1}x_g^{-1}$, $x_gx_1,\dots,x_gx_g$, $y_1,\dots,y_{n-1}$ and $z_1,\dots,z_{n-1}$ (see \cite{Kob}).
In addition, we see
\begin{eqnarray*}
1(x_1^2\cdots{}x_g^2)1^{-1}
&=&
x_1x_g^{-1}\cdot{}x_gx_1\cdot{}x_2x_g^{-1}\cdot{}x_gx_2\cdots{}x_{g-1}x_g^{-1}\cdot{}x_gx_{g-1}\cdot{}x_gx_g,\\
x_g(x_1^2\cdots{}x_g^2)x_g^{-1}
&=&
x_gx_1\cdot{}x_1x_g^{-1}\cdot{}x_gx_2\cdot{}x_2x_g^{-1}\cdots{}x_gx_{g-1}\cdot{}x_{g-1}x_g^{-1}\cdot{}x_gx_g.
\end{eqnarray*}
Hence when $n=0$, we have two relations
\begin{eqnarray*}
&&x_1x_g^{-1}\cdot{}x_gx_1\cdot{}x_2x_g^{-1}\cdot{}x_gx_2\cdots{}x_{g-1}x_g^{-1}\cdot{}x_gx_{g-1}\cdot{}x_gx_g=1,\\
&&x_gx_1\cdot{}x_1x_g^{-1}\cdot{}x_gx_2\cdot{}x_2x_g^{-1}\cdots{}x_gx_{g-1}\cdot{}x_{g-1}x_g^{-1}\cdot{}x_gx_g=1.
\end{eqnarray*}

Let $G$ be the group which has the presentation of the lemma.
We next show that $G$ is isomorphic to $\pi_1^+(N_{g,n},\ast)$.
Let $\varphi:G\to\pi_1^+(N_{g,n},\ast)$ and $\psi:\pi_1^+(N_{g,n},\ast)\to{}G$ be homomorphisms defined as
\begin{eqnarray*}
&&
\varphi(x_{i\:i+1})=x_ix_g^{-1}\cdot{}x_gx_{i+1},~
\varphi(x_{jj})=x_jx_g^{-1}\cdot{}x_gx_j,~
\varphi(y_k)=y_k,~
\varphi(z_k)=z_k,\\
&&
\psi(x_ix_g^{-1})=x_{i\:i+1}x_{i+1\:i+1}^{-1}x_{i+1\:i+2}x_{i+2\:i+2}^{-1}\cdots{}x_{g-1\:g}x_{gg}^{-1},\\
&&
\psi(x_gx_j)=x_{gg}x_{g-1\:g}^{-1}x_{g-1\:g-1}x_{g-2\:g-1}^{-1}\cdots{}x_{j+1\:j+1}x_{j\:j+1}^{-1}x_{jj},\\
&&
\psi(y_k)=y_k,~
\psi(z_k)=z_k
\end{eqnarray*}
for $1\leq{i}\leq{g-1}$, $1\leq{j}\leq{g}$ and $1\leq{k}\leq{n-1}$.
We calculate
\begin{eqnarray*}
&&\varphi(x_{11}\cdots{}x_{g-1\:g-1}x_{gg})=x_1x_g^{-1}\cdot{}x_gx_1\cdots{}x_{g-1}x_g^{-1}\cdot{}x_gx_{g-1}\cdot{}x_gx_g,\\
&&
\varphi(x_{gg}x_{g-1\:g}^{-1}x_{g-1\:g-1}x_{g-2\:g-1}^{-1}\cdots{}x_{22}x_{12}^{-1}x_{11}x_{12}\cdots{}x_{g-1\:g})\\
&=&
x_gx_g(x_{g-1}x_g^{-1}\cdot{}x_gx_g)^{-1}(x_{g-1}x_g^{-1}\cdot{}x_gx_{g-1})(x_{g-2}x_g^{-1}\cdot{}x_gx_{g-1})^{-1}\\
&&
\cdots(x_2x_g^{-1}\cdot{}x_gx_2)(x_1x_g^{-1}\cdot{}x_gx_2)^{-1}(x_1x_g^{-1}\cdot{}x_gx_1)(x_1x_g^{-1}\cdot{}x_gx_2)\\
&&
\cdots(x_{g-1}x_g^{-1}\cdot{}x_gx_g)\\
&=&
x_gx_1\cdot{}x_1x_g^{-1}\cdot{}x_gx_2\cdot{}x_2x_g^{-1}\cdots{}x_gx_{g-1}\cdot{}x_{g-1}x_g^{-1}\cdot{}x_gx_g,\\
&&
\psi(x_1x_g^{-1}\cdot{}x_gx_1\cdot{}x_2x_g^{-1}\cdot{}x_gx_2\cdots{}x_{g-1}x_g^{-1}\cdot{}x_gx_{g-1}\cdot{}x_gx_g)\\
&=&
x_{11}x_{22}\cdots{}x_{g-1\:g-1}x_{gg},\\
&&
\psi(x_gx_1\cdot{}x_1x_g^{-1}\cdot{}x_gx_2\cdot{}x_2x_g^{-1}\cdots{}x_gx_{g-1}\cdot{}x_{g-1}x_g^{-1}\cdot{}x_gx_g)\\
&=&
x_{gg}x_{g-1\:g}^{-1}x_{g-1\:g-1}x_{g-2\:g-1}^{-1}\cdots{}x_{22}x_{12}^{-1}x_{11}x_{12}\cdots{}x_{g-1\:g}.
\end{eqnarray*}
Hence $\varphi$ and $\psi$ are well defined even if $n=0$.
In addition, we have
\begin{eqnarray*}
\psi\varphi(x_{i\:i+1})
&=&
\psi(x_ix_g^{-1}\cdot{}x_gx_{i+1})\\
&=&
x_{i\:i+1}x_{i+1\:i+1}^{-1}\cdots{}x_{g-1\:g}x_{gg}^{-1}\\
&&\cdot{}x_{gg}x_{g-1\:g}^{-1}\cdots{}x_{i+2\:i+2}x_{i+1\:i+2}^{-1}x_{i+1\:i+1}
=x_{i\:i+1},\\
\psi\varphi(x_{jj})
&=&
\psi(x_jx_g^{-1}\cdot{}x_gx_{j})\\
&=&
x_{j\:j+1}x_{j+1\:j+1}^{-1}\cdots{}x_{g-1\:g}x_{gg}^{-1}\cdot{}x_{gg}x_{g-1\:g}^{-1}\cdots{}x_{j+1\:j+1}x_{j\:j+1}^{-1}x_{jj}\\
&=&
x_{jj},\\
\psi\varphi(y_k)
&=&
\psi(y_k)=y_k,~
\psi\varphi(z_k)=\psi(z_k)=z_k,\\
\varphi\psi(x_ix_g^{-1})
&=&
\varphi(x_{i\:i+1}x_{i+1\:i+1}^{-1}\cdots{}x_{g-1\:g}x_{gg}^{-1})\\
&=&
(x_ix_g^{-1}\cdot{}x_gx_{i+1})(x_{i+1}x_g^{-1}\cdot{}x_gx_{i+1})^{-1}\\
&&\cdots(x_{g-1}x_g^{-1}\cdot{}x_gx_g)(x_gx_g)^{-1}
=x_ix_g^{-1},\\
\varphi\psi(x_gx_j)
&=&
\varphi(x_{gg}x_{g-1\:g}^{-1}\cdots{}x_{j+1\:j+1}x_{j\:j+1}^{-1}x_{jj})\\
&=&
(x_gx_g)(x_{g-1}x_g^{-1}\cdot{}x_gx_g)^{-1}\cdots{}(x_{j+1}x_g^{-1}\cdot{}x_gx_{j+1})\\
&&\cdot(x_jx_g^{-1}\cdot{}x_gx_{j+1})^{-1}(x_jx_g^{-1}\cdot{}x_gx_j)
=x_gx_j,\\
\varphi\psi(y_k)
&=&
\varphi(y_k)=y_k,~
\varphi\psi(z_k)=\varphi(z_k)=z_k.
\end{eqnarray*}
Therefore $\varphi$ and $\psi$ are the isomorphisms.
Thus we finish the proof.
\end{proof}
%%%%%%%%%%%%%%%%%%%%%%%%%%%%%%%%%%%%%%%%%%%%%%%%%%%%%%%%%%%%%%%%%%%%%%%%%%%%%%%%%%%%%%%%%%%%%%%%%%%%
%%%%%%%%%%%%%%%%%%%%%%%%%%%%%%%%%%%%%%%%%%%%%%%%%%%%%%%%%%%%%%%%%%%%%%%%%%%%%%%%%%%%%%%%%%%%%%%%%%%%

Let $X$ be a set consisting of $S_\alpha$, where $\alpha$ is a non-separating two-sided simple loop whose complement is non-orientable, or a separating simple loop which bounds the $m$-th boundary component for $1\leq{m}\leq{n-1}$ or one crosscap whose complement is non-orientable, and let $X^\prime$ be the following subset of $X$:
$$X^\prime=\{S_{x_{12}},\dots,S_{x_{g-1\:g}},S_{x_{11}},\dots,S_{x_{gg}},S_{y_1},\dots,S_{y_{n-1}},S_{z_1},\dots,S_{z_{n-1}}\}.$$
Let $Y$ be the generating set for $\PM(N_{g,n+1})$ given in Theorem~\ref{gen-PMF}.
In the actions on $\pi_1^+(N_{g,n},\ast)$ and $\pi^+$ by $\PM(N_{g,n+1})$, we regard the $(n+1)$-st boundary component of $N_{g,n+1}$ as $\ast$.
Recall that the action $f(S_\alpha)$ of $f\in\PM(N_{g,n+1})$ on $S_\alpha\in\pi^+$ was defined in Subsection~\ref{non-ori}.
We prove the following proposition.

%%%%%%%%%%%%%%%%%%%%%%%%%%%%%%%%%%%%%%%%%%%%%%%%%%%%%%%%%%%%%%%%%%%%%%%%%%%%%%%%%%%%%%%%%%%%%%%%%%%%
%%%%%%%%%%%%%%%%%%%%%%%%%%%%%%%%%%%%%%%%%%%%%%%%%%%%%%%%%%%%%%%%%%%%%%%%%%%%%%%%%%%%%%%%%%%%%%%%%%%%
\begin{prop}\label{3}
\begin{enumerate}
\item	$X$ generates $\pi^+$.
\item	$\PM(N_{g,n+1})(X^\prime)=X$.
\item	For any $x\in{X^\prime}$ and $y\in{Y}$, $y^{\pm1}(x)$ is in the subgroup of $\pi^+$ generated by $X^\prime$.
\end{enumerate}
\end{prop}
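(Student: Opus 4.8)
The plan is to follow exactly the template established for Propositions~\ref{1} and \ref{2}, adapting each of the three parts to the new setting of $\pi^+$, $\pi_1^+(N_{g,n},\ast)$ and the generating set $X^\prime$ indexed by the Reidemeister--Schreier generators $x_{ij}$, $x_{jj}$, $y_k$, $z_k$. For part (1), I would first record the analogue of Lemmas~\ref{d-e} and \ref{y_n}: namely that the "missing" symbols $S_{y_n}$ and $S_{z_n}$ (where $y_n$ and $z_n$ are the obvious loops of $N_{g,n}$) lie in the subgroup generated by $X^\prime$. These follow mechanically from relations (1) and (2) of $\pi^+$ together with the two defining relations of $\pi_1^+(N_{g,0},\ast)$ from the preceding lemma: $x_{11}\cdots x_{gg}=1$ gives $S_{z_n}$ or $S_{y_n}$ as a product of symbols in $X^\prime$, exactly as in Lemma~\ref{y_n}. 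Then, given an arbitrary generator $S_\alpha$ of $\pi^+$ (so $\alpha$ is a non-separating two-sided loop with non-orientable complement, or a separating loop bounding a boundary component or a single crosscap), I would put $\alpha$ in a normal position analogous to Figure~\ref{normal-position-loop-non-ori-surf} and write $\alpha$, via relation (2), as a product of the standard generators $x_{ij},x_{jj},y_k,z_k$ of $\pi_1^+(N_{g,n},\ast)$; each resulting symbol is visibly in $X$, so $X$ generates $\pi^+$.

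For part (2), I would argue the change-of-coordinates principle: for $S_\alpha\in X$ with $\alpha$ a non-separating two-sided loop whose complement is non-orientable, there is $f\in\PM(N_{g,n+1})$ carrying the model loop $x_{11}$ (a small two-sided loop encircling one crosscap, which is the generator $x_1^2$, hence $x_{11}$) to $\alpha$, so $f(S_{x_{11}})=S_\alpha$; if $\alpha$ bounds the $m$-th boundary component then some $f$ carries $y_m$ to $\alpha$; and if $\alpha$ bounds a single crosscap with non-orientable complement then some $f$ carries $z_m$ (equivalently $x_gy_mx_g^{-1}$, a loop around a crosscap) — or rather the model two-sided loop bounding one crosscap — to $\alpha$. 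The only subtlety is to make sure the representatives among the $X'$-symbols genuinely realize each topological type; once that dictionary is fixed, (2) is immediate from the classification of the complement.

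For part (3), I would carry out the same kind of computation as in the proof of Proposition~\ref{2}(3): for each generator $y\in Y = \{t_{a_1},\dots,t_{a_{g-1}}, t_b, Y_{\mu,a_1}, B_{r_0},\dots,B_{r_n}, t_{s_{kl}}\}$, compute $y^{\pm1}$ applied to each $x\in X^\prime$, using that $\PM(N_{g,n+1})$ acts on $\pi_1(N_{g,n},\ast)$ by the formulas already implicit in Proposition~\ref{2}(3), and then translate the resulting words in $x_i,y_k$ into words in $x_{ij},x_{jj},y_k,z_k$ via the change-of-variables $x_ix_g^{-1}=\psi(x_ix_g^{-1})$, $x_gx_j=\psi(x_gx_j)$ coming from the lemma above. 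Since every $x\in X'$ corresponds to a \emph{word of even length} in the $x_i$'s (being in $\pi_1^+$), the image $y^{\pm1}(x)$ is again an even-length word, and rewriting it through the Reidemeister--Schreier transversal $\{1,x_g\}$ expresses it as a product of the $X'$-symbols via relations (1) and (2) of $\pi^+$ — and, crucially, relation (3) is available to handle the conjugations by odd-length pieces like $x_g(\,\cdot\,)x_g^{-1}$ that appear when $y$ moves a generator past the crosscap. I expect the main obstacle to be exactly this bookkeeping: keeping the rewriting in $\pi^+$ consistent, in particular checking that each conjugation forced by a Dehn twist or pushing map is by a \emph{two-sided} loop so that relation (3) legitimately applies, and confirming that $B_{r_0}$ (which conjugates everything by $x_g$, an odd loop) nonetheless produces $X'$-words — here one uses that $x_g$ appears to an even total power after combining with the transversal, or handles it through the auxiliary symbols $S_{y_n},S_{z_n}$ of the opening lemma. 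Once these cases are all discharged, Lemma~\ref{main-lem} applies and gives that $X'$ generates $\pi^+$, which is the input needed for the proof of Theorem~\ref{main-2}.
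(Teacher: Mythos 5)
Your overall template is the right one (it is the same induction-via-Lemma~\ref{main-lem} scheme the paper uses), and your part (1) matches the paper's argument. But there are two concrete gaps. First, your dictionary in part (2) is wrong: $x_{11}=x_1^2$ is a \emph{separating} two-sided loop bounding one crosscap, not a model for a non-separating two-sided loop, and $z_m=x_gy_mx_g^{-1}$ is a conjugated boundary loop, not a loop bounding a crosscap. The correct correspondence, which the paper uses, is $x_{12}$ for the non-separating two-sided loops with non-orientable complement, $y_m$ for loops bounding the $m$-th boundary component, and $x_{11}$ for loops bounding one crosscap with non-orientable complement. You flag that the dictionary must be fixed, but as stated your change-of-coordinates argument sends the wrong $X^\prime$-symbols to the wrong topological types.

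Second, and more seriously, your auxiliary lemma is too weak. You propose only the analogue of Lemma~\ref{y_n} (that $S_{y_n}$ and $S_{z_n}$ lie in $\langle X^\prime\rangle$), but the computations in part (3) unavoidably produce symbols $S_{x_{ij}}$ and $S_{x_{ji}}$ with $j-i\geq 2$ (e.g.\ $S_{x_{13}},S_{x_{24}},S_{x_{42}}$ in the $t_b$ computation, and $S_{x_{ig}},S_{x_{gi}},S_{x_{g\:g-1}}$ in the $B_{r_0}$ computation), none of which are in $X^\prime$. The paper's Lemma~\ref{ij} shows, by induction on $j-i$ and essential use of relation (3) (e.g.\ $S_{x_{ij}}=S_{x_{i\:j-1}}S_{x_{j-1\:j-1}}^{-1}S_{x_{j-1\:j}}$ and $S_{x_{ji}}=S_{x_{jj}}S_{x_{ij}}^{-1}S_{x_{ii}}$), that all these lie in $\langle X^\prime\rangle$; without this your rewriting for $t_b$ and $B_{r_0}$ does not close up in $X^\prime$. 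Relatedly, your plan to use relation (3) to absorb ``conjugation by odd-length pieces like $x_g(\cdot)x_g^{-1}$'' misreads that relation: it only allows conjugation by elements of $\pi_1^+$ represented by simple loops (two-sided), so it cannot be applied to conjugation by the one-sided $x_g$; the paper instead rewrites each image such as $x_g^{-1}x_ix_{i+1}x_g=x_{ig}^{-1}x_{ii}x_{i+1\:g}$ directly as a word in the extended family $x_{ij}$ and then invokes Lemma~\ref{ij}. Neither the even-power heuristic nor the symbols $S_{y_n},S_{z_n}$ substitutes for this step.
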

%%%%%%%%%%%%%%%%%%%%%%%%%%%%%%%%%%%%%%%%%%%%%%%%%%%%%%%%%%%%%%%%%%%%%%%%%%%%%%%%%%%%%%%%%%%%%%%%%%%%
%%%%%%%%%%%%%%%%%%%%%%%%%%%%%%%%%%%%%%%%%%%%%%%%%%%%%%%%%%%%%%%%%%%%%%%%%%%%%%%%%%%%%%%%%%%%%%%%%%%%

In order to prove the proposition, we show the following lemma.

%%%%%%%%%%%%%%%%%%%%%%%%%%%%%%%%%%%%%%%%%%%%%%%%%%%%%%%%%%%%%%%%%%%%%%%%%%%%%%%%%%%%%%%%%%%%%%%%%%%%
%%%%%%%%%%%%%%%%%%%%%%%%%%%%%%%%%%%%%%%%%%%%%%%%%%%%%%%%%%%%%%%%%%%%%%%%%%%%%%%%%%%%%%%%%%%%%%%%%%%%
\begin{lem}\label{ij}
$S_{x_{ij}}$, $S_{x_{ji}}$, $S_{y_n}$ and $S_{z_n}$ are in the subgroup of $\pi^+$ generated by $X^\prime$ for $1\leq{i<j}\leq{g}$, where $y_n$ is a simple loop of $N_{g,n}$ as shown in Figure~\ref{gen-pi_1-non-ori-surf} and $z_n=x_gy_nx_g^{-1}$.
\end{lem}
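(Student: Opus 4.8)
The plan is to express each of the symbols $S_{x_{ij}}$, $S_{x_{ji}}$ (for $i<j$), $S_{y_n}$ and $S_{z_n}$ as a word in the symbols belonging to $X'$, using only relations (1), (2) and (3) of $\pi^+$, exactly in the spirit of Lemmas~\ref{d-e} and \ref{y_n}. The underlying $\pi_1^+$-identities are the ones already recorded in the previous lemma and in the definitions $x_{ij}=x_ix_j$, $z_k=x_gy_kx_g^{-1}$; the only work is to verify that at each step the two factors being multiplied (or the conjugating element, for relation (3)) are themselves already known to correspond to symbols in the subgroup generated by $X'$, so that relations (2) and (3) legitimately apply.

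First I would treat $S_{y_n}$. Since $\pi_1^+(N_{g,n},\ast)$ contains $y_n$ and, by the previous lemma together with the relation $x_1^2\cdots x_g^2=1$ in $\pi_1(N_{g,n},\ast)$, we have $y_n=(x_{11}\cdots x_{gg}\,y_1\cdots y_{n-1})^{-1}$ after rewriting each $x_j^2$ as $x_{jj}$; applying relations (1) and (2) repeatedly gives $S_{y_n}=(S_{x_{11}}\cdots S_{x_{gg}}S_{y_1}\cdots S_{y_{n-1}})^{-1}$, whose letters all lie in $X'$. Then $S_{z_n}=S_{x_gy_nx_g^{-1}}$: here I would like to use relation (3) with $\alpha=x_g$, but $S_{x_g}$ is not itself in $X'$ — so instead I would first note $z_n = x_g y_n x_g^{-1}$ and rewrite $x_g y_n x_g^{-1}$ as a product of the generators $x_{gg}$, $x_{jj}$, $x_{i\,i+1}$, $y_k$, $z_k$ of $\pi_1^+$, which is possible because $z_n$ lies in $\pi_1^+(N_{g,n},\ast)$ and the previous lemma gives an explicit free (or one/two-relator) basis; pushing the symbol $S$ through that product word via relations (1) and (2) expresses $S_{z_n}$ over $X'$.

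Next I would handle $S_{x_{ij}}$ and $S_{x_{ji}}$ for $1\le i<j\le g$. The key is the telescoping identities already used to define the maps $\varphi,\psi$ in the previous lemma: $x_ix_g^{-1}=x_{i\,i+1}x_{i+1\,i+1}^{-1}x_{i+1\,i+2}x_{i+2\,i+2}^{-1}\cdots x_{g-1\,g}x_{gg}^{-1}$ and $x_gx_j=x_{gg}x_{g-1\,g}^{-1}x_{g-1\,g-1}x_{g-2\,g-1}^{-1}\cdots x_{j+1\,j+1}x_{j\,j+1}^{-1}x_{jj}$. Multiplying the appropriate such expressions gives $x_{ij}=x_ix_j=(x_ix_g^{-1})(x_gx_j)$ and $x_{ji}=x_jx_i=(x_jx_g^{-1})(x_gx_i)$ as explicit words in the $X'$-generators; applying relations (1) and (2) to peel the symbol $S$ off these words yields $S_{x_{ij}},S_{x_{ji}}$ in the subgroup generated by $X'$. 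I should double-check the boundary cases $i=g-1$ (where the telescoping word for $x_ix_g^{-1}$ degenerates to $x_{g-1\,g}x_{gg}^{-1}$) and $j=g$ (where $x_gx_j$ degenerates to $x_{gg}$), and confirm that for $i<j$ both $S_{x_{ij}}$ and $S_{x_{ji}}$ arise this way even though only the "consecutive" symbols $S_{x_{i\,i+1}}$ are in $X'$.

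The main obstacle is purely bookkeeping: relations (2) and (3) may only be invoked when every symbol appearing is genuinely a valid generator of $\pi^+$ (i.e. its subscript is represented by a non-trivial simple loop) and when intermediate partial products also correspond to such symbols lying in the $X'$-subgroup — so I must order the factorizations so that each prefix of each telescoping word is itself realized as an element of the subgroup already shown to be generated by $X'$, rather than merely as an abstract group element. Concretely this means doing the rewriting left-to-right and appealing to the already-established facts ($S_{y_n}$, then $S_{z_n}$, then the $S_{x_{ij}}$) in that order, and being careful that the simple-loop hypothesis on generators is not violated by any intermediate term; once the order is fixed the computation is routine and parallels the proofs of Lemmas~\ref{d-e} and \ref{y_n}.
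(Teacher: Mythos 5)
Your plan for $S_{y_n}$ coincides with the paper's, and your telescoping identities for $x_ix_g^{-1}$ and $x_gx_j$ are correct. But the proposal has a genuine gap exactly at the point you dismiss as ``bookkeeping.'' Relation (2) may only be applied when \emph{all three} subscripts are classes represented by non-trivial simple loops, so peeling $S$ off your telescoping word for $x_{ij}$ (resp.\ $x_{ji}$) letter by letter, using (1) and (2) alone, forces every intermediate suffix to be such a class; these suffixes are elements of the form $x_t^{-1}x_j$, $x_tx_j$, $x_t^{-1}x_i$, etc. That $x_t^{-1}x_j$ is represented by a simple loop is not routine, is nowhere verified in your argument, and can actually fail: in the closed Klein bottle ($g=2$, $n=0$) the element $x_1^{-1}x_2$ lies in no conjugacy class of a simple closed curve, so $S_{x_{12}^{-1}x_{11}}$ is not even a generator of $\pi^+$ and your peeling of $S_{x_{21}}=S_{x_{22}x_{12}^{-1}x_{11}}$ is illegitimate there (the lemma survives in that case only because $x_{21}=x_{12}^{-1}$, which your scheme does not notice). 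The paper avoids precisely this: it proves $S_{x_{ij}}$ by induction on $j-i$ via $S_{x_{ij}}=S_{x_{i\:j-1}x_{j-1\:j}}S_{x_{j-1\:j}^{-1}x_{j-1\:j-1}^{-1}x_{j-1\:j}}$ and then invokes relation (3) on the conjugate factor (similarly $S_{x_{ji}}=S_{x_{jj}}S_{x_{ij}}^{-1}S_{x_{ii}}$ via (3)), so the only geometric inputs are that conjugates of simple classes are simple and that $x_ix_{j-1}^2x_j$ is simple. Claiming the lemma with (1),(2) only is therefore a genuinely stronger assertion than what you have justified, and it is not a cosmetic difference from the paper, which uses (3) here deliberately.

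The $S_{z_n}$ step is also only a promissory note, and your proposed order (establish $S_{z_n}$ before $S_{x_{ij}},S_{x_{ji}}$) runs against the natural dependency: the explicit identity is $z_n=(x_{g1}x_{12}x_{23}\cdots x_{g-1\:g}z_1\cdots z_{n-1})^{-1}$, whose use requires $S_{x_{g1}}$, i.e.\ the $x_{ji}$ case, to be handled first; expanding $x_{g1}$ further into $X'$-letters and peeling with (1),(2) alone reintroduces the same unverified simple-loop conditions on partial products. (Also, your stated side condition that intermediate partial products lie in the subgroup generated by $X'$ is not the relevant one; what must hold is that they are subscripts of legitimate generators, i.e.\ simple-loop classes in $\pi_1^+$.) So the skeleton of your argument is the right one, but as written it is incomplete where it departs from the paper: either carry out the geometric verification that every intermediate class in your factorizations is a non-trivial simple two-sided loop (handling low-genus cases separately), or follow the paper and use relation (3) at the conjugation steps.
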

%%%%%%%%%%%%%%%%%%%%%%%%%%%%%%%%%%%%%%%%%%%%%%%%%%%%%%%%%%%%%%%%%%%%%%%%%%%%%%%%%%%%%%%%%%%%%%%%%%%%
%%%%%%%%%%%%%%%%%%%%%%%%%%%%%%%%%%%%%%%%%%%%%%%%%%%%%%%%%%%%%%%%%%%%%%%%%%%%%%%%%%%%%%%%%%%%%%%%%%%%

%%%%%%%%%%%%%%%%%%%%%%%%%%%%%%%%%%%%%%%%%%%%%%%%%%%%%%%%%%%%%%%%%%%%%%%%%%%%%%%%%%%%%%%%%%%%%%%%%%%%
%%%%%%%%%%%%%%%%%%%%%%%%%%%%%%%%%%%%%%%%%%%%%%%%%%%%%%%%%%%%%%%%%%%%%%%%%%%%%%%%%%%%%%%%%%%%%%%%%%%%
\begin{proof}
For $1\leq{i<j}\leq{g}$, if $j-i=1$, then $x_{ij}$ is in $X^\prime$ clearly.
If $j-i\geq2$, we calculate
\begin{eqnarray*}
S_{x_{ij}}&=&S_{x_{i\:j-1}x_{j-1\:j-1}^{-1}x_{j-1\:j}}
\overset{(2)}{=}S_{x_{i\:j-1}x_{j-1\:j}}S_{x_{j-1\:j}^{-1}x_{j-1\:j-1}^{-1}x_{j-1\:j}}\\
&\overset{(1),(2),(3)}{=}&S_{x_{i\:j-1}}S_{x_{j-1\:j}}S_{x_{j-1\:j}}^{-1}S_{x_{j-1\:j-1}}^{-1}S_{x_{j-1\:j}}
=S_{x_{i\:j-1}}S_{x_{j-1\:j-1}}^{-1}S_{x_{j-1\:j}}.
\end{eqnarray*}
By induction on $j-i$, it follows that $S_{x_{ij}}$ is in the subgroup of $\pi^+$ generated by $X^\prime$.
In addition, we calculate
\begin{eqnarray*}
S_{x_{ji}}&=&S_{x_{jj}x_{ij}^{-1}x_{ii}}
\overset{(2)}{=}S_{x_{ij}^{-1}}S_{x_{ij}x_{jj}x_{ij}^{-1}x_{ii}}
\overset{(1),(2)}{=}S_{x_{ij}}^{-1}S_{x_{ij}x_{jj}x_{ij}^{-1}}S_{x_{ii}}\\
&\overset{(3)}{=}&S_{x_{ij}}^{-1}S_{x_{ij}}S_{x_{jj}}S_{x_{ij}}^{-1}S_{x_{ii}}
=S_{x_{jj}}S_{x_{ij}}^{-1}S_{x_{ii}}.
\end{eqnarray*}
Hence $S_{x_{ji}}$ is also in the subgroup of $\pi^+$ generated by $X^\prime$.
Moreover, by the relations (1) and (2) of $\pi^+$, we calculate
\begin{eqnarray*}
S_{y_n}&=&S_{(x_{11}\cdots{}x_{gg}y_1\cdots{}y_{n-1})^{-1}}
=(S_{x_{11}}\cdots{}S_{x_{gg}}S_{y_1}\cdots{}S_{y_{n-1}})^{-1},\\
S_{z_n}&=&S_{(x_{g1}x_{12}x_{23}\cdots{}x_{g-1\:g}z_1\cdots{}z_{n-1})^{-1}}\\
&=&(S_{x_{g1}}S_{x_{12}}S_{x_{23}}\cdots{}S_{x_{g-1\:g}}S_{z_1}\cdots{}S_{z_{n-1}})^{-1}.
\end{eqnarray*}
Therefore $S_{y_n}$ and $S_{z_n}$ are also in the subgroup of $\pi^+$ generated by $X^\prime$.

Thus we get the claim.
\end{proof}
%%%%%%%%%%%%%%%%%%%%%%%%%%%%%%%%%%%%%%%%%%%%%%%%%%%%%%%%%%%%%%%%%%%%%%%%%%%%%%%%%%%%%%%%%%%%%%%%%%%%
%%%%%%%%%%%%%%%%%%%%%%%%%%%%%%%%%%%%%%%%%%%%%%%%%%%%%%%%%%%%%%%%%%%%%%%%%%%%%%%%%%%%%%%%%%%%%%%%%%%%

%%%%%%%%%%%%%%%%%%%%%%%%%%%%%%%%%%%%%%%%%%%%%%%%%%%%%%%%%%%%%%%%%%%%%%%%%%%%%%%%%%%%%%%%%%%%%%%%%%%%
%%%%%%%%%%%%%%%%%%%%%%%%%%%%%%%%%%%%%%%%%%%%%%%%%%%%%%%%%%%%%%%%%%%%%%%%%%%%%%%%%%%%%%%%%%%%%%%%%%%%
\begin{proof}[Proof of Proposition~\ref{3}]

(1)	For any generator $S_\alpha$ of $\pi^+$, the complement of $\alpha$ is homeomorphic to either
	\begin{enumerate}
	\item[(b)]	$N_{g-2,n+2}$,
	\item[(c)]	$\Sigma_{h,n+2}$ only if $g=2h+2$,
	\item[(d)]	$N_{h,m+1}\sqcup{N_{g-h,n-m+1}}$ for $1\leq{h}\leq{g-1}$ and $0\leq{m}\leq{n}$ or
	\item[(e)]	$\Sigma_{h,m+1}\sqcup{N_{g-2h,n-m+1}}$ for $\displaystyle0\leq{h}\leq\frac{g-1}{2}$ and $0\leq{m}\leq{n}$.
	\end{enumerate}
	(see \cite{S1}).
	Therefore, there is $f\in\PM(N_{g,n+1})$ such that $\alpha=f_\sharp(\beta)$, where $\beta$ is either one of the simple loops as in Figure~\ref{normal-position-loop-non-ori-surf}~(b), (c), (d) and (e).
	For the case (b), we have $S_\alpha=S_{f_\sharp(x_{12})}$.
	For the case (c), by the relation (2) of $\pi^+$, we have $S_\alpha=S_{f_\sharp(x_{12})}\cdots{}S_{f_\sharp(x_{g-1\:g})}$.
	For the case (d), by the relation (2) of $\pi^+$, we have $S_\alpha=S_{f_\sharp(x_{11})}\cdots{}S_{f_\sharp(x_{hh})}S_{f_\sharp(y_{k_1})}\cdots{}S_{f_\sharp(y_{k_m})}$ for some $1\leq{k_1<\cdots<k_m}\leq{n}$.
	For the case (e), by the relation (1) and (2) of $\pi^+$, we have
	\begin{eqnarray*}
	S_\alpha
	&=&
	S_{f_\sharp(x_{12})}S_{f_\sharp(x_{34})}\cdots{}S_{f_\sharp(x_{2h-1\:2h})}\cdot{}S_{f_\sharp(x_{2h\:2h+1})}^{-1}S_{f_\sharp(x_{2h-1\:2h})}^{-1}\cdots{}S_{f_\sharp(x_{12})}^{-1}\\
	&&
	~\cdot~S_{f_\sharp(x_{23})}S_{f_\sharp(x_{45})}\cdots{}S_{f_\sharp(x_{2h\:2h+1})}S_{f_\sharp(y_{k_1})}\cdots{}S_{f_\sharp(y_{k_m})}
	\end{eqnarray*}
	for some $1\leq{k_1<\cdots<k_m}\leq{n}$ if $h\neq0$.
	If $h=0$, by the relation (2) of $\pi^+$, we have $S_\alpha=S_{f_\sharp(y_{k_1})}\cdots{}S_{f_\sharp(y_{k_m})}$ for some $1\leq{k_1<\cdots<k_m}\leq{n}$.
	Since each symbol of the right hand sides is in $X$, we conclude that $X$ generates $\pi$.

(2)	For any $S_\alpha\in{X}$, if $\alpha$ is a non-separating two-sided simple loop whose complement is non-orientable, there is $f\in\PM(N_{g,n+1})$ such that $f_\sharp(x_{12})=\alpha$, and hence $f(S_{x_{12}})=S_\alpha$.
	If $\alpha$ is a separating simple loop which bounds the $m$-th boundary component for $1\leq{m}\leq{n-1}$, there is $f\in\PM(N_{g,n+1})$ such that $f_\sharp(y_m)=\alpha$, and hence $f(S_{y_m})=S_\alpha$.
	If $\alpha$ is a separating simple loop which bounds one crosscap whose complement is non-orientable, there is $f\in\PM(N_{g,n+1})$ such that $f_\sharp(x_{11})=\alpha$, and hence $f(S_{x_{11}})=S_\alpha$.
	Therefore we obtain the claim.

(3)	In this proof, we omit details of calculations.
	In calculations, we use the relation (3) as little as possible (see Remark~\ref{reduced-rel}).
	
	Let $y=t_{a_i}$ for $1\leq{i}\leq{g}$.
	We calculate
	\begin{eqnarray*}
	y(S_{x_{i-1\:i}})&=&S_{x_{i-1\:i}x_{i\:i+1}^{-1}}
	\overset{(1),(2)}{=}S_{x_{i-1\:i}}S_{x_{i\:i+1}}^{-1},
	\end{eqnarray*}
	\begin{eqnarray*}
	y^{-1}(S_{x_{i-1\:i}})&=&S_{x_{i-1\:i}x_{i\:i+1}}
	\overset{(2)}{=}S_{x_{i-1\:i}}S_{x_{i\:i+1}},
	\end{eqnarray*}
	\begin{eqnarray*}
	y(S_{x_{i+1\:i+2}})&=&S_{x_{i\:i+1}x_{i+1\:i+2}}
	\overset{(2)}{=}S_{x_{i\:i+1}}S_{x_{i+1\:i+2}},
	\end{eqnarray*}
	\begin{eqnarray*}
	y^{-1}(S_{x_{i+1\:i+2}})&=&S_{x_{i\:i+1}^{-1}x_{i+1\:i+2}}
	\overset{(1),(2)}{=}S_{x_{i\:i+1}}^{-1}S_{x_{i+1\:i+2}},
	\end{eqnarray*}
	\begin{eqnarray*}
	y(S_{x_{ii}})&=&S_{x_{i\:i+1}x_{i+1\:i+1}^{-1}x_{i\:i+1}^{-1}}
	\overset{(1),(3)}{=}S_{x_{i\:i+1}}S_{x_{i+1\:i+1}}^{-1}S_{x_{i\:i+1}}^{-1},
	\end{eqnarray*}
	\begin{eqnarray*}
	y^{-1}(S_{x_{ii}})&=&S_{x_{ii}x_{i+1\:i+1}x_{i\:i+1}^{-1}x_{ii}x_{i\:i+1}}
	\overset{(2)}{=}S_{x_{ii}x_{i+1\:i+1}}S_{x_{i\:i+1}^{-1}x_{ii}x_{i\:i+1}}\\
	&\overset{(1),(2),(3)}{=}&S_{x_{ii}}S_{x_{i+1\:i+1}}S_{x_{i\:i+1}}^{-1}S_{x_{ii}}S_{x_{i\:i+1}},
	\end{eqnarray*}
	\begin{eqnarray*}
	y(S_{x_{i+1\:i+1}})&=&S_{x_{i\:i+1}x_{i+1\:i+1}x_{i\:i+1}^{-1}x_{ii}x_{i+1\:i+1}}\\
	&\overset{(2)}{=}&S_{x_{i\:i+1}x_{i+1\:i+1}x_{i\:i+1}^{-1}}S_{x_{ii}x_{i+1\:i+1}}\\
	&\overset{(1),(2),(3)}{=}&S_{x_{i\:i+1}}S_{x_{i+1\:i+1}}S_{x_{i\:i+1}}^{-1}S_{x_{ii}}S_{x_{i+1\:i+1}},
	\end{eqnarray*}
	\begin{eqnarray*}
	y^{-1}(S_{x_{i+1\:i+1}})&=&S_{x_{i\:i+1}^{-1}x_{ii}^{-1}x_{i\:i+1}}
	\overset{(1),(3)}{=}S_{x_{i\:i+1}}^{-1}S_{x_{ii}}^{-1}S_{x_{i\:i+1}},
	\end{eqnarray*}
	\begin{eqnarray*}
	t_{a_{g-1}}^{\pm1}(S_{z_l})&=&S_{x_{g-1\:g}^{\pm1}z_lx_{g-1\:g}^{\mp1}}
	\overset{(1),(2)}{=}S_{x_{g-1\:g}}^{\pm1}S_{z_l}S_{x_{g-1\:g}}^{\mp1}
	\end{eqnarray*}
	for $1\leq{l}\leq{n-1}$, and $y^{\pm1}(x)=x$ for any other $x\in{X^\prime}$.

	Let $y=t_{b}$.
	We calculate
	\begin{eqnarray*}
	y(S_{x_{45}})&=&S_{x_{23}^{-1}x_{12}x_{23}x_{34}x_{45}}
	\overset{(1),(2)}{=}S_{x_{23}}^{-1}S_{x_{12}}S_{x_{23}}S_{x_{34}}S_{x_{45}},
	\end{eqnarray*}
	\begin{eqnarray*}
	y^{-1}(S_{x_{45}})&=&S_{x_{34}^{-1}x_{23}^{-1}x_{12}^{-1}x_{23}x_{45}}
	\overset{(1),(2)}{=}S_{x_{34}}^{-1}S_{x_{23}}^{-1}S_{x_{12}}^{-1}S_{x_{23}}S_{x_{45}},
	\end{eqnarray*}
	\begin{eqnarray*}
	y(S_{x_{11}})&=&S_{x_{12}x_{34}x_{44}^{-1}x_{33}^{-1}x_{12}^{-1}x_{13}x_{34}^{-1}x_{23}^{-1}x_{12}^{-1}}
	\overset{(2)}{=}S_{x_{12}x_{34}x_{44}^{-1}x_{33}^{-1}x_{12}^{-1}}S_{x_{13}x_{34}^{-1}x_{23}^{-1}x_{12}^{-1}}\\
	&\overset{(1),(2)}{=}&S_{x_{12}}S_{x_{34}}S_{x_{44}}^{-1}S_{x_{33}}^{-1}S_{x_{12}}^{-1}S_{x_{13}}S_{x_{34}}^{-1}S_{x_{23}}^{-1}S_{x_{12}}^{-1},
	\end{eqnarray*}
	\begin{eqnarray*}
	y^{-1}(S_{x_{11}})&=&S_{x_{11}x_{22}x_{33}x_{44}x_{34}^{-1}x_{12}^{-1}x_{11}x_{12}x_{23}x_{34}x_{23}^{-1}}\\
	&\overset{(2)}{=}&S_{x_{11}x_{22}x_{33}x_{44}}S_{x_{34}^{-1}x_{12}^{-1}x_{11}x_{12}x_{23}x_{34}x_{23}^{-1}}\\
	&\overset{(1),(2)}{=}&S_{x_{11}}S_{x_{22}}S_{x_{33}}S_{x_{44}}S_{x_{34}}^{-1}S_{x_{12}^{-1}x_{11}x_{12}x_{23}x_{34}x_{23}^{-1}}\\
	&\overset{(2)}{=}&S_{x_{11}}S_{x_{22}}S_{x_{33}}S_{x_{44}}S_{x_{34}}^{-1}S_{x_{12}^{-1}x_{11}x_{12}}S_{x_{23}x_{34}x_{23}^{-1}}\\
	&\overset{(1),(2),(3)}{=}&S_{x_{11}}S_{x_{22}}S_{x_{33}}S_{x_{44}}S_{x_{34}}^{-1}S_{x_{12}}^{-1}S_{x_{11}}S_{x_{12}}S_{x_{23}}S_{x_{34}}S_{x_{23}}^{-1},
	\end{eqnarray*}
	\begin{eqnarray*}
	y(S_{x_{22}})&=&S_{x_{12}x_{23}x_{34}x_{13}^{-1}x_{11}x_{22}x_{33}x_{44}x_{34}^{-1}}
	\overset{(2)}{=}S_{x_{12}x_{23}}S_{x_{34}x_{13}^{-1}x_{11}x_{22}x_{33}x_{44}x_{34}^{-1}}\\
	&\overset{(1),(2)}{=}&S_{x_{12}}S_{x_{23}}S_{x_{34}}S_{x_{13}}^{-1}S_{x_{11}x_{22}x_{33}x_{44}x_{34}^{-1}}\\
	&\overset{(2)}{=}&S_{x_{12}}S_{x_{23}}S_{x_{34}}S_{x_{13}}^{-1}S_{x_{11}}S_{x_{22}}S_{x_{33}x_{44}x_{34}^{-1}}\\
	&\overset{(1),(2)}{=}&S_{x_{12}}S_{x_{23}}S_{x_{34}}S_{x_{13}}^{-1}S_{x_{11}}S_{x_{22}}S_{x_{33}x_{44}}S_{x_{34}}^{-1}\\
	&\overset{(2)}{=}&S_{x_{12}}S_{x_{23}}S_{x_{34}}S_{x_{13}}^{-1}S_{x_{11}}S_{x_{22}}S_{x_{33}}S_{x_{44}}S_{x_{34}}^{-1},
	\end{eqnarray*}
	\begin{eqnarray*}
	y^{-1}(S_{x_{22}})&=&S_{x_{23}x_{34}^{-1}x_{23}^{-1}x_{12}^{-1}x_{22}x_{34}x_{44}^{-1}x_{33}^{-1}x_{22}^{-1}x_{11}^{-1}x_{12}}\\
	&\overset{(2)}{=}&S_{x_{23}x_{42}}S_{x_{42}^{-1}x_{34}^{-1}x_{23}^{-1}x_{12}^{-1}x_{22}x_{34}x_{44}^{-1}x_{33}^{-1}x_{22}^{-1}x_{11}^{-1}x_{12}}\\
	&\overset{(2)}{=}&S_{x_{23}}S_{x_{42}}S_{x_{42}^{-1}x_{34}^{-1}x_{23}^{-1}x_{12}^{-1}x_{22}}S_{x_{34}x_{44}^{-1}x_{33}^{-1}x_{22}^{-1}x_{11}^{-1}x_{12}}\\
	&\overset{(1),(2)}{=}&S_{x_{23}}S_{x_{42}}S_{x_{42}}^{-1}S_{x_{34}}^{-1}S_{x_{23}^{-1}x_{12}^{-1}x_{22}}S_{x_{34}}S_{x_{44}^{-1}x_{33}^{-1}x_{22}^{-1}x_{11}^{-1}x_{12}}\\
	&\overset{(1),(2)}{=}&S_{x_{23}}S_{x_{34}}^{-1}S_{x_{23}^{-1}x_{12}^{-1}}S_{x_{22}}S_{x_{34}}S_{x_{44}}^{-1}S_{x_{33}}^{-1}S_{x_{22}^{-1}x_{11}^{-1}x_{12}}\\
	&\overset{(1),(2)}{=}&S_{x_{23}}S_{x_{34}}^{-1}S_{x_{23}}^{-1}S_{x_{12}}^{-1}S_{x_{22}}S_{x_{34}}S_{x_{44}}^{-1}S_{x_{33}}^{-1}S_{x_{22}^{-1}x_{11}^{-1}}S_{x_{12}}\\
	&\overset{(1),(2)}{=}&S_{x_{23}}S_{x_{34}}^{-1}S_{x_{23}}^{-1}S_{x_{12}}^{-1}S_{x_{22}}S_{x_{34}}S_{x_{44}}^{-1}S_{x_{33}}^{-1}S_{x_{22}}^{-1}S_{x_{11}}^{-1}S_{x_{12}},
	\end{eqnarray*}
	\begin{eqnarray*}
	y(S_{x_{33}})&=&S_{x_{34}x_{44}^{-1}x_{33}^{-1}x_{22}^{-1}x_{11}^{-1}x_{13}x_{23}^{-1}x_{22}x_{33}x_{34}^{-1}x_{23}^{-1}x_{12}^{-1}x_{23}}\\
	&\overset{(2)}{=}&S_{x_{34}x_{44}^{-1}x_{33}^{-1}x_{22}^{-1}x_{11}^{-1}x_{13}}S_{x_{23}^{-1}x_{22}x_{33}x_{34}^{-1}x_{23}^{-1}x_{12}^{-1}x_{23}}\\
	&\overset{(2)}{=}&S_{x_{34}}S_{x_{44}^{-1}x_{33}^{-1}x_{22}^{-1}x_{11}^{-1}x_{13}}S_{x_{23}^{-1}x_{22}x_{33}}S_{x_{34}^{-1}x_{23}^{-1}x_{12}^{-1}x_{23}}\\
	&\overset{(1),(2)}{=}&S_{x_{34}}S_{x_{44}}^{-1}S_{x_{33}^{-1}x_{22}^{-1}x_{11}^{-1}x_{13}}S_{x_{23}}^{-1}S_{x_{22}x_{33}}S_{x_{34}}^{-1}S_{x_{23}}^{-1}S_{x_{12}}^{-1}S_{x_{23}}\\
	&\overset{(2)}{=}&S_{x_{34}}S_{x_{44}}^{-1}S_{x_{33}^{-1}x_{22}^{-1}x_{11}^{-1}}S_{x_{13}}S_{x_{23}}^{-1}S_{x_{22}}S_{x_{33}}S_{x_{34}}^{-1}S_{x_{23}}^{-1}S_{x_{12}}^{-1}S_{x_{23}}\\
	&\overset{(1),(2)}{=}&S_{x_{34}}S_{x_{44}}^{-1}S_{x_{33}}^{-1}S_{x_{22}}^{-1}S_{x_{11}}^{-1}S_{x_{13}}S_{x_{23}}^{-1}S_{x_{22}}S_{x_{33}}S_{x_{34}}^{-1}S_{x_{23}}^{-1}S_{x_{12}}^{-1}S_{x_{23}},
	\end{eqnarray*}
	\begin{eqnarray*}
	y^{-1}(S_{x_{33}})&=&S_{x_{12}^{-1}x_{11}x_{22}x_{33}x_{44}x_{24}^{-1}x_{12}x_{23}x_{34}}\\
	&\overset{(1),(2)}{=}&S_{x_{12}}^{-1}S_{x_{11}x_{22}x_{33}x_{44}x_{24}^{-1}x_{12}x_{23}x_{34}}\\
	&\overset{(2)}{=}&S_{x_{12}}^{-1}S_{x_{11}x_{22}x_{33}x_{44}x_{24}^{-1}}S_{x_{12}x_{23}x_{34}}\\
	&\overset{(2)}{=}&S_{x_{12}}^{-1}S_{x_{11}}S_{x_{22}x_{33}x_{44}x_{24}^{-1}}S_{x_{12}}S_{x_{23}}S_{x_{34}}\\
	&\overset{(1),(2)}{=}&S_{x_{12}}^{-1}S_{x_{11}}S_{x_{22}x_{33}x_{44}}S_{x_{24}}^{-1}S_{x_{12}}S_{x_{23}}S_{x_{34}}\\
	&\overset{(2)}{=}&S_{x_{12}}^{-1}S_{x_{11}}S_{x_{22}}S_{x_{33}}S_{x_{44}}S_{x_{24}}^{-1}S_{x_{12}}S_{x_{23}}S_{x_{34}},
	\end{eqnarray*}
	\begin{eqnarray*}
	y(S_{x_{44}})&=&S_{x_{23}^{-1}x_{12}x_{23}x_{34}x_{44}x_{34}^{-1}x_{12}^{-1}x_{11}x_{22}x_{33}x_{44}}\\
	&\overset{(1),(2)}{=}&S_{x_{23}}^{-1}S_{x_{12}x_{23}x_{34}x_{44}x_{34}^{-1}x_{12}^{-1}x_{11}x_{22}x_{33}x_{44}}\\
	&\overset{(2)}{=}&S_{x_{23}}^{-1}S_{x_{12}}S_{x_{23}}S_{x_{34}x_{44}x_{34}^{-1}x_{12}^{-1}x_{11}x_{22}x_{33}x_{44}}\\
	&\overset{(2)}{=}&S_{x_{23}}^{-1}S_{x_{12}}S_{x_{23}}S_{x_{34}x_{44}x_{34}^{-1}}S_{x_{12}^{-1}x_{11}x_{22}x_{33}x_{44}}\\
	&\overset{(1),(2),(3)}{=}&S_{x_{23}}^{-1}S_{x_{12}}S_{x_{23}}S_{x_{34}}S_{x_{44}}S_{x_{34}}^{-1}S_{x_{12}}^{-1}S_{x_{11}x_{22}x_{33}x_{44}}\\
	&\overset{(2)}{=}&S_{x_{23}}^{-1}S_{x_{12}}S_{x_{23}}S_{x_{34}}S_{x_{44}}S_{x_{34}}^{-1}S_{x_{12}}^{-1}S_{x_{11}}S_{x_{22}}S_{x_{33}}S_{x_{44}},
	\end{eqnarray*}
	\begin{eqnarray*}
	y^{-1}(S_{x_{44}})&=&S_{x_{34}^{-1}x_{23}^{-1}x_{12}^{-1}x_{24}x_{34}^{-1}x_{22}^{-1}x_{11}^{-1}x_{12}x_{34}}\\
	&\overset{(2)}{=}&S_{x_{34}^{-1}x_{23}^{-1}x_{12}^{-1}x_{24}}S_{x_{34}^{-1}x_{22}^{-1}x_{11}^{-1}x_{12}x_{34}}\\
	&\overset{(2)}{=}&S_{x_{34}^{-1}x_{23}^{-1}}S_{x_{12}^{-1}x_{24}}S_{x_{34}^{-1}x_{22}^{-1}x_{11}^{-1}}S_{x_{12}x_{34}}\\
	&\overset{(1),(2)}{=}&S_{x_{34}}^{-1}S_{x_{23}}^{-1}S_{x_{12}}^{-1}S_{x_{24}}S_{x_{34}}^{-1}S_{x_{22}}^{-1}S_{x_{11}}^{-1}S_{x_{12}}S_{x_{34}},
	\end{eqnarray*}
	\begin{eqnarray*}
	y(S_{z_k})&=&S_{x_{23}^{-1}x_{12}x_{23}x_{34}z_kx_{34}^{-1}x_{23}^{-1}x_{12}^{-1}x_{23}}
	\overset{(2)}{=}S_{x_{23}^{-1}x_{12}x_{23}x_{34}z_k}S_{x_{34}^{-1}x_{23}^{-1}x_{12}^{-1}x_{23}}\\
	&\overset{(1),(2)}{=}&S_{x_{23}^{-1}x_{12}x_{23}x_{34}}S_{z_k}S_{x_{34}}^{-1}S_{x_{23}}^{-1}S_{x_{12}}^{-1}S_{x_{23}}\\
	&\overset{(1),(2)}{=}&S_{x_{23}}^{-1}S_{x_{12}}S_{x_{23}}S_{x_{34}}S_{z_k}S_{x_{34}}^{-1}S_{x_{23}}^{-1}S_{x_{12}}^{-1}S_{x_{23}},
	\end{eqnarray*}
	\begin{eqnarray*}
	y^{-1}(S_{z_k})&=&S_{x_{34}^{-1}x_{23}^{-1}x_{12}^{-1}x_{23}z_kx_{23}^{-1}x_{12}x_{23}x_{34}}
	\overset{(2)}{=}S_{x_{34}^{-1}x_{23}^{-1}x_{12}^{-1}}S_{x_{23}z_kx_{23}^{-1}x_{12}x_{23}x_{34}}\\
	&\overset{(1),(2)}{=}&S_{x_{34}}^{-1}S_{x_{23}}^{-1}S_{x_{12}}^{-1}S_{x_{23}}S_{z_k}S_{x_{23}}^{-1}S_{x_{12}}S_{x_{23}}S_{x_{34}}
	\end{eqnarray*}
	for $1\leq{k}\leq{n-1}$ only if $g=4$, and $y^{\pm1}(x)=x$ for any other $x\in{X^\prime}$.
	
	Let $y=Y_{\mu,a_1}$.
	We calculate
	\begin{eqnarray*}
	y(S_{x_{12}})&=&S_{x_{11}x_{22}x_{12}^{-1}}
	\overset{(1),(2)}{=}S_{x_{11}}S_{x_{22}}S_{x_{12}}^{-1},
	\end{eqnarray*}
	\begin{eqnarray*}
	y^{-1}(S_{x_{12}})&=&S_{x_{12}^{-1}x_{11}x_{22}}
	\overset{(1),(2)}{=}S_{x_{12}}^{-1}S_{x_{11}}S_{x_{22}},
	\end{eqnarray*}
	\begin{eqnarray*}
	y(S_{x_{23}})&=&S_{x_{11}x_{23}}
	\overset{(2)}{=}S_{x_{11}}S_{x_{23}},
	\end{eqnarray*}
	\begin{eqnarray*}
	y^{-1}(S_{x_{23}})&=&S_{x_{12}^{-1}x_{11}x_{12}x_{23}}
	\overset{(2)}{=}S_{x_{12}^{-1}x_{11}x_{12}}S_{x_{23}}
	\overset{(1),(3)}{=}S_{x_{12}}^{-1}S_{x_{11}}S_{x_{12}}S_{x_{23}},
	\end{eqnarray*}
	\begin{eqnarray*}
	y(S_{x_{11}})&=&S_{x_{11}x_{22}x_{12}^{-1}x_{11}^{-1}x_{12}x_{22}^{-1}x_{11}^{-1}}
	\overset{(2)}{=}S_{x_{11}x_{22}}S_{x_{12}^{-1}x_{11}^{-1}x_{12}x_{22}^{-1}x_{11}^{-1}}\\
	&\overset{(2)}{=}&S_{x_{11}}S_{x_{22}}S_{x_{12}^{-1}x_{11}^{-1}x_{12}}S_{x_{22}^{-1}x_{11}^{-1}}\\
	&\overset{(1),(2),(3)}{=}&S_{x_{11}}S_{x_{22}}S_{x_{12}}^{-1}S_{x_{11}}^{-1}S_{x_{12}}S_{x_{22}}^{-1}S_{x_{11}}^{-1},
	\end{eqnarray*}
	\begin{eqnarray*}
	y^{-1}(S_{x_{11}})&=&S_{x_{12}^{-1}x_{11}^{-1}x_{12}}
	\overset{(1),(3)}{=}S_{x_{12}}^{-1}S_{x_{11}}^{-1}S_{x_{12}},
	\end{eqnarray*}
	\begin{eqnarray*}
	y(S_{x_{22}})&=&S_{x_{11}x_{22}x_{12}^{-1}x_{11}x_{12}}
	\overset{(2)}{=}S_{x_{11}x_{22}}S_{x_{12}^{-1}x_{11}x_{12}}\\
	&\overset{(1),(2),(3)}{=}&S_{x_{11}}S_{x_{22}}S_{x_{12}}^{-1}S_{x_{11}}S_{x_{12}},
	\end{eqnarray*}
	\begin{eqnarray*}
	y^{-1}(S_{x_{22}})&=&S_{x_{12}^{-1}x_{11}x_{12}x_{11}x_{22}}
	\overset{(2)}{=}S_{x_{12}^{-1}x_{11}x_{12}}S_{x_{11}x_{22}}\\
	&\overset{(1),(2),(3)}{=}&S_{x_{12}}^{-1}S_{x_{11}}S_{x_{12}}S_{x_{11}}S_{x_{22}},
	\end{eqnarray*}
	\begin{eqnarray*}
	y(S_{z_k})&=&S_{x_{11}z_kx_{11}^{-1}}
	\overset{(1),(2)}{=}S_{x_{11}}S_{z_k}S_{x_{11}}^{-1}
	\end{eqnarray*}
	\begin{eqnarray*}
	y^{-1}(S_{z_k})&=&S_{x_{12}^{-1}x_{11}x_{12}z_kx_{12}^{-1}x_{11}^{-1}x_{12}}
	\overset{(2)}{=}S_{x_{12}^{-1}x_{11}x_{12}z_k}S_{x_{12}^{-1}x_{11}^{-1}x_{12}}\\
	&\overset{(1),(2),(3)}{=}&S_{x_{12}^{-1}x_{11}x_{12}}S_{z_k}S_{x_{12}}^{-1}S_{x_{11}}^{-1}S_{x_{12}}\\
	&\overset{(1),(3)}{=}&S_{x_{12}}^{-1}S_{x_{11}}S_{x_{12}}S_{z_k}S_{x_{12}}^{-1}S_{x_{11}}^{-1}S_{x_{12}},
	\end{eqnarray*}
	for $1\leq{k}\leq{n-1}$ only if $g=2$, and $y^{\pm1}(x)=x$ for any other $x\in{X^\prime}$.
	
	Let $y=B_{r_k}$ for $1\leq{k}\leq{n}$.
	We calculate
	\begin{eqnarray*}
	y(S_{x_{g-1\:g}})&=&S_{x_{g-1\:g}z_k}
	\overset{(2)}{=}S_{x_{g-1\:g}}S_{z_k},
	\end{eqnarray*}
	\begin{eqnarray*}
	y^{-1}(S_{x_{g-1\:g}})&=&S_{x_{g-1\:g}y_k}
	\overset{(2)}{=}S_{x_{g-1\:g}}S_{y_k},
	\end{eqnarray*}
	\begin{eqnarray*}
	y(S_{x_{gg}})&=&S_{x_{gg}y_kz_k}
	\overset{(2)}{=}S_{x_{gg}y_k}S_{z_k}
	\overset{(2)}{=}S_{x_{gg}}S_{y_k}S_{z_k},
	\end{eqnarray*}
	\begin{eqnarray*}
	y^{-1}(S_{x_{gg}})&=&S_{z_kx_{gg}y_k}
	\overset{(2)}{=}S_{z_k}S_{x_{gg}}S_{y_k},
	\end{eqnarray*}
	\begin{eqnarray*}
	y(S_{y_k})&=&S_{z_k^{-1}}
	\overset{(1)}{=}S_{z_k}^{-1},
	\end{eqnarray*}
	\begin{eqnarray*}
	y^{-1}(S_{y_k})&=&S_{y_k^{-1}x_{gg}^{-1}z_k^{-1}x_{gg}y_k}
	\overset{(2)}{=}S_{y_k^{-1}x_{gg}^{-1}z_k^{-1}}S_{x_{gg}y_k}\\
	&\overset{(1),(2)}{=}&S_{y_k^{-1}x_{gg}^{-1}}S_{z_k}^{-1}S_{x_{gg}}S_{y_k}
	\overset{(1),(2)}{=}S_{y_k}^{-1}S_{x_{gg}}^{-1}S_{z_k}^{-1}S_{x_{gg}}S_{y_k},
	\end{eqnarray*}
	\begin{eqnarray*}
	y(S_{y_m})&=&S_{z_k^{-1}y_k^{-1}y_my_kz_k}
	\overset{(2)}{=}S_{z_k^{-1}y_k^{-1}y_my_k}S_{z_k}
	\overset{(1),(2)}{=}S_{z_k}^{-1}S_{y_k}^{-1}S_{y_m}S_{y_k}S_{z_k},
	\end{eqnarray*}
	\begin{eqnarray*}
	y^{-1}(S_{y_m})&=&S_{y_k^{-1}x_{gg}^{-1}z_k^{-1}x_{gg}y_mx_{gg}^{-1}z_kx_{gg}y_k}
	\overset{(2)}{=}S_{y_k^{-1}x_{gg}^{-1}z_k^{-1}}S_{x_{gg}y_mx_{gg}^{-1}z_kx_{gg}y_k}\\
	&\overset{(1),(2)}{=}&S_{y_k}^{-1}S_{x_{gg}}^{-1}S_{z_k}^{-1}S_{x_{gg}y_mx_{gg}^{-1}}S_{z_kx_{gg}y_k}\\
	&\overset{(1),(2)}{=}&S_{y_k}^{-1}S_{x_{gg}}^{-1}S_{z_k}^{-1}S_{x_{gg}y_m}S_{x_{gg}}^{-1}S_{z_k}S_{x_{gg}}S_{y_k}\\
	&\overset{(2)}{=}&S_{y_k}^{-1}S_{x_{gg}}^{-1}S_{z_k}^{-1}S_{x_{gg}}S_{y_m}S_{x_{gg}}^{-1}S_{z_k}S_{x_{gg}}S_{y_k},
	\end{eqnarray*}
	\begin{eqnarray*}
	y(S_{z_k})&=&S_{x_{gg}y_k^{-1}x_{gg}^{-1}}
	\overset{(1),(2)}{=}S_{x_{gg}}S_{y_k}^{-1}S_{x_{gg}}^{-1},
	\end{eqnarray*}
	\begin{eqnarray*}
	y^{-1}(S_{z_k})&=&S_{y_k^{-1}}
	\overset{(1)}{=}S_{y_k}^{-1},
	\end{eqnarray*}
	\begin{eqnarray*}
	y(S_{z_m})&=&S_{z_k^{-1}z_mz_k}
	\overset{(1),(2)}{=}S_{z_k}^{-1}S_{z_m}S_{z_k},
	\end{eqnarray*}
	\begin{eqnarray*}
	y^{-1}(S_{z_m})&=&S_{y_k^{-1}z_my_k}
	\overset{(2)}{=}S_{y_k^{-1}z_m}S_{y_k}
	\overset{(1),(2)}{=}S_{y_k}^{-1}S_{z_m}S_{y_k},
	\end{eqnarray*}
	\begin{eqnarray*}
	y(S_{z_{m^\prime}})&=&S_{x_{gg}y_kx_{gg}^{-1}z_{m^\prime}x_{gg}y_k^{-1}x_{gg}^{-1}}
	\overset{(2)}{=}S_{x_{gg}y_kx_{gg}^{-1}}S_{z_{m^\prime}x_{gg}y_k^{-1}x_{gg}^{-1}}\\
	&\overset{(1),(2)}{=}&S_{x_{gg}y_k}S_{x_{gg}}^{-1}S_{z_{m^\prime}}S_{x_{gg}}S_{y_k}^{-1}S_{x_{gg}}^{-1}
	\overset{(2)}{=}S_{x_{gg}}S_{y_k}S_{x_{gg}}^{-1}S_{z_{m^\prime}}S_{x_{gg}}S_{y_k}^{-1}S_{x_{gg}}^{-1},
	\end{eqnarray*}
	\begin{eqnarray*}
	y^{-1}(S_{z_{m^\prime}})&=&S_{z_kz_{m^\prime}z_k^{-1}}
	\overset{(1),(2)}{=}S_{z_kz_{m^\prime}}S_{z_k}^{-1}
	\overset{(1),(2)}{=}S_{z_k}S_{z_{m^\prime}}S_{z_k}^{-1}
	\end{eqnarray*}
	for $1\leq{m}<k$ and $k<m^\prime\leq{n-1}$, and $y^{\pm1}(x)=x$ for any other $x\in{X^\prime}$.
	
	Let $y=B_{r_0}$.
	We calculate
	\begin{eqnarray*}
	y(S_{x_{i\:i+1}})&=&S_{x_{ig}^{-1}x_{ii}x_{i+1\:g}}
	\overset{(1),(2)}{=}S_{x_{ig}}^{-1}S_{x_{ii}}S_{x_{i+1\:g}},
	\end{eqnarray*}
	\begin{eqnarray*}
	y^{-1}(S_{x_{i\:i+1}})&=&S_{x_{gi}x_{i+1\:i+1}x_{g\:i+1}^{-1}}
	\overset{(1),(2)}{=}S_{x_{gi}x_{i+1\:i+1}}S_{x_{g\:i+1}}^{-1}\\
	&\overset{(2)}{=}&S_{x_{gi}}S_{x_{i+1\:i+1}}S_{x_{g\:i+1}}^{-1},
	\end{eqnarray*}
	\begin{eqnarray*}
	y(S_{x_{g-1\:g}})&=&S_{x_{g-1\:g}^{-1}x_{g-1\:g-1}x_{gg}}
	\overset{(1),(2)}{=}S_{x_{g-1\:g}}^{-1}S_{x_{g-1\:g-1}}S_{x_{gg}},
	\end{eqnarray*}
	\begin{eqnarray*}
	y^{-1}(S_{x_{g-1\:g}})&=&S_{x_{g\:g-1}},
	\end{eqnarray*}
	\begin{eqnarray*}
	y(S_{x_{jj}})&=&S_{x_{jg}^{-1}x_{jj}x_{jg}}
	\overset{(1),(3)}{=}S_{x_{jg}}^{-1}S_{x_{jj}}S_{x_{jg}},
	\end{eqnarray*}
	\begin{eqnarray*}
	y^{-1}(S_{x_{jj}})&=&S_{x_{gj}x_{jj}x_{gj}^{-1}}
	\overset{(1),(3)}{=}S_{x_{gj}}S_{x_{jj}}S_{x_{gj}}^{-1},
	\end{eqnarray*}
	\begin{eqnarray*}
	y^{\pm1}(S_{x_{gg}})&=&S_{x_{gg}},
	\end{eqnarray*}
	\begin{eqnarray*}
	y(S_{y_l})&=&S_{x_{gg}^{-1}z_lx_{gg}}
	\overset{(1),(2)}{=}S_{x_{gg}}^{-1}S_{z_l}S_{x_{gg}},
	\end{eqnarray*}
	\begin{eqnarray*}
	y^{-1}(S_{y_l})&=&S_{z_l},
	\end{eqnarray*}
	\begin{eqnarray*}
	y(S_{z_l})&=&S_{y_l},
	\end{eqnarray*}
	\begin{eqnarray*}
	y^{-1}(S_{z_l})&=&S_{x_{gg}y_lx_{gg}^{-1}}
	\overset{(1),(2)}{=}S_{x_{gg}y_l}S_{x_{gg}}^{-1}
	\overset{(2)}{=}S_{x_{gg}}S_{y_l}S_{x_{gg}}^{-1}
	\end{eqnarray*}
	for $1\leq{i}\leq{g-2}$, $1\leq{j}\leq{g-1}$ and $1\leq{l}\leq{n-1}$.
	
	Let $y=t_{s_{kl}}$ for $1\leq{k<l}\leq{n}$.
	We calculate
	\begin{eqnarray*}
	y(S_{y_k})&=&S_{y_ky_ly_ky_l^{-1}y_k^{-1}}
	\overset{(2)}{=}S_{y_ky_l}S_{y_ky_l^{-1}y_k^{-1}}
	\overset{(1),(2)}{=}S_{y_k}S_{y_l}S_{y_k}S_{y_l}^{-1}S_{y_k}^{-1},
	\end{eqnarray*}
	\begin{eqnarray*}
	y^{-1}(S_{y_k})&=&S_{y_l^{-1}y_ky_l}
	\overset{(1),(2)}{=}S_{y_l}^{-1}S_{y_k}S_{y_l},
	\end{eqnarray*}
	\begin{eqnarray*}
	y(S_{y_l})&=&S_{y_ky_ly_k^{-1}}
	\overset{(1),(2)}{=}S_{y_ky_l}S_{y_k}^{-1}
	\overset{(2)}{=}S_{y_k}S_{y_l}S_{y_k}^{-1},
	\end{eqnarray*}
	\begin{eqnarray*}
	y^{-1}(S_{y_l})&=&S_{y_l^{-1}y_k^{-1}y_ly_ky_l}
	\overset{(2)}{=}S_{y_l^{-1}y_k^{-1}y_l}S_{y_ky_l}\\
	&\overset{(2)}{=}&S_{y_l^{-1}y_k^{-1}}S_{y_l}S_{y_k}S_{y_l}
	\overset{(1),(2)}{=}S_{y_l}^{-1}S_{y_k}^{-1}S_{y_l}S_{y_k}S_{y_l},
	\end{eqnarray*}
	\begin{eqnarray*}
	y(S_{y_m})&=&S_{y_ky_ly_k^{-1}y_l^{-1}y_my_ly_ky_l^{-1}y_k^{-1}}
	\overset{(2)}{=}S_{y_ky_ly_k^{-1}y_l^{-1}y_my_l}S_{y_ky_l^{-1}y_k^{-1}}\\
	&\overset{(1),(2)}{=}&S_{y_ky_ly_k^{-1}}S_{y_l^{-1}y_my_l}S_{y_k}S_{y_l}^{-1}S_{y_k}^{-1}\\
	&\overset{(1),(2)}{=}&S_{y_ky_l}S_{y_k}^{-1}S_{y_l}^{-1}S_{y_m}S_{y_l}S_{y_k}S_{y_l}^{-1}S_{y_k}^{-1}\\
	&\overset{(2)}{=}&S_{y_k}S_{y_l}S_{y_k}^{-1}S_{y_l}^{-1}S_{y_m}S_{y_l}S_{y_k}S_{y_l}^{-1}S_{y_k}^{-1},
	\end{eqnarray*}
	\begin{eqnarray*}
	y^{-1}(S_{y_m})&=&S_{y_l^{-1}y_k^{-1}y_ly_ky_my_k^{-1}y_l^{-1}y_ky_l}
	\overset{(2)}{=}S_{y_l^{-1}y_k^{-1}y_l}S_{y_ky_my_k^{-1}y_l^{-1}y_ky_l}\\
	&\overset{(1),(2)}{=}&S_{y_l}^{-1}S_{y_k}^{-1}S_{y_l}S_{y_ky_my_k^{-1}}S_{y_l^{-1}y_ky_l}\\
	&\overset{(1),(2)}{=}&S_{y_l}^{-1}S_{y_k}^{-1}S_{y_l}S_{y_ky_m}S_{y_k}^{-1}S_{y_l}^{-1}S_{y_k}S_{y_l}\\
	&\overset{(2)}{=}&S_{y_l}^{-1}S_{y_k}^{-1}S_{y_l}S_{y_k}S_{y_m}S_{y_k}^{-1}S_{y_l}^{-1}S_{y_k}S_{y_l},
	\end{eqnarray*}
	\begin{eqnarray*}
	y(S_{z_k})&=&S_{z_kz_lz_kz_l^{-1}z_k^{-1}}
	\overset{(2)}{=}S_{z_kz_l}S_{z_kz_l^{-1}z_k^{-1}}
	\overset{(1),(2)}{=}S_{z_k}S_{z_l}S_{z_k}S_{z_l}^{-1}S_{z_k}^{-1},
	\end{eqnarray*}
	\begin{eqnarray*}
	y^{-1}(S_{z_k})&=&S_{z_l^{-1}z_kz_l}
	\overset{(1),(2)}{=}S_{z_l}^{-1}S_{z_k}S_{z_l},
	\end{eqnarray*}
	\begin{eqnarray*}
	y(S_{z_l})&=&S_{z_kz_lz_k^{-1}}
	\overset{(1),(2)}{=}S_{z_kz_l}S_{z_k}^{-1}
	\overset{(2)}{=}S_{z_k}S_{z_l}S_{z_k}^{-1},
	\end{eqnarray*}
	\begin{eqnarray*}
	y^{-1}(S_{z_l})&=&S_{z_l^{-1}z_k^{-1}z_lz_kz_l}
	\overset{(2)}{=}S_{z_l^{-1}z_k^{-1}z_l}S_{z_kz_l}\\
	&\overset{(2)}{=}&S_{z_l^{-1}z_k^{-1}}S_{z_l}S_{z_k}S_{z_l}
	\overset{(1),(2)}{=}S_{z_l}^{-1}S_{z_k}^{-1}S_{z_l}S_{z_k}S_{z_l},
	\end{eqnarray*}
	\begin{eqnarray*}
	y(S_{z_m})&=&S_{z_kz_lz_k^{-1}z_l^{-1}z_mz_lz_kz_l^{-1}z_k^{-1}}
	\overset{(2)}{=}S_{z_kz_lz_k^{-1}z_l^{-1}z_mz_l}S_{z_kz_l^{-1}z_k^{-1}}\\
	&\overset{(1),(2)}{=}&S_{z_kz_lz_k^{-1}}S_{z_l^{-1}z_mz_l}S_{z_k}S_{z_l}^{-1}S_{z_k}^{-1}\\
	&\overset{(1),(2)}{=}&S_{z_kz_l}S_{z_k}^{-1}S_{z_l}^{-1}S_{z_m}S_{z_l}S_{z_k}S_{z_l}^{-1}S_{z_k}^{-1}\\
	&\overset{(2)}{=}&S_{z_k}S_{z_l}S_{z_k}^{-1}S_{z_l}^{-1}S_{z_m}S_{z_l}S_{z_k}S_{z_l}^{-1}S_{z_k}^{-1},
	\end{eqnarray*}
	\begin{eqnarray*}
	y^{-1}(S_{z_m})&=&S_{z_l^{-1}z_k^{-1}z_lz_kz_mz_k^{-1}z_l^{-1}z_kz_l}
	\overset{(2)}{=}S_{z_l^{-1}z_k^{-1}z_l}S_{z_kz_mz_k^{-1}z_l^{-1}z_kz_l}\\
	&\overset{(1),(2)}{=}&S_{z_l}^{-1}S_{z_k}^{-1}S_{z_l}S_{z_kz_mz_k^{-1}}S_{z_l^{-1}z_kz_l}\\
	&\overset{(1),(2)}{=}&S_{z_l}^{-1}S_{z_k}^{-1}S_{z_l}S_{z_kz_m}S_{z_k}^{-1}S_{z_l}^{-1}S_{z_k}S_{z_l}\\
	&\overset{(2)}{=}&S_{z_l}^{-1}S_{z_k}^{-1}S_{z_l}S_{z_k}S_{z_m}S_{z_k}^{-1}S_{z_l}^{-1}S_{z_k}S_{z_l}
	\end{eqnarray*}
	for $k<m<l$, and $y^{\pm1}(x)=x$ for any other $x\in{X^\prime}$.
	
	Hence we have that for any $x\in{X^\prime}$ and $y\in{Y}^{\pm1}$, $y(x)$ is in the subgroup of $\pi^+$ generated by $X^\prime$, by Lemma~\ref{ij}
\end{proof}
%%%%%%%%%%%%%%%%%%%%%%%%%%%%%%%%%%%%%%%%%%%%%%%%%%%%%%%%%%%%%%%%%%%%%%%%%%%%%%%%%%%%%%%%%%%%%%%%%%%%
%%%%%%%%%%%%%%%%%%%%%%%%%%%%%%%%%%%%%%%%%%%%%%%%%%%%%%%%%%%%%%%%%%%%%%%%%%%%%%%%%%%%%%%%%%%%%%%%%%%%

%%%%%%%%%%%%%%%%%%%%%%%%%%%%%%%%%%%%%%%%%%%%%%%%%%%%%%%%%%%%%%%%%%%%%%%%%%%%%%%%%%%%%%%%%%%%%%%%%%%%
%%%%%%%%%%%%%%%%%%%%%%%%%%%%%%%%%%%%%%%%%%%%%%%%%%%%%%%%%%%%%%%%%%%%%%%%%%%%%%%%%%%%%%%%%%%%%%%%%%%%
\begin{proof}[Proof of Theorem~\ref{main-2}]
By Lemma~\ref{main-lem} and Proposition~\ref{3}, it follows that $\pi^+$ is generated by $X^\prime$.
There is a natural map $\pi^+\to\pi_1^+(N_{g,n},\ast)$.
The relations~(1), (2) and (3) of $\pi^+$ are satisfied in $\pi_1^+(N_{g,n},\ast)$ clearly.
Hence the map is a homomorphism.
In addition, the relations $S_{x_{11}}\cdots{}S_{x_{gg}}=1$ and $S_{x_{gg}}S_{x_{g-1\:g}}^{-1}S_{x_{g-1\:g-1}}S_{x_{g-2\:g-1}}^{-1}\cdots{}S_{x_{22}}S_{x_{12}}^{-1}S_{x_{11}}S_{x_{12}}\cdots{}S_{x_{g-1\:g}}=1$ are obtained from the relations~(1), (2) and (3) of $\pi^+$ for $n=0$.
Therefore the map is an isomorphism for any $n\geq0$.
Thus we complete the proof.
\end{proof}
%%%%%%%%%%%%%%%%%%%%%%%%%%%%%%%%%%%%%%%%%%%%%%%%%%%%%%%%%%%%%%%%%%%%%%%%%%%%%%%%%%%%%%%%%%%%%%%%%%%%
%%%%%%%%%%%%%%%%%%%%%%%%%%%%%%%%%%%%%%%%%%%%%%%%%%%%%%%%%%%%%%%%%%%%%%%%%%%%%%%%%%%%%%%%%%%%%%%%%%%%

%%%%%%%%%%%%%%%%%%%%%%%%%%%%%%%%%%%%%%%%%%%%%%%%%%%%%%%%%%%%%%%%%%%%%%%%%%%%%%%%%%%%%%%%%%%%%%%%%%%%
%%%%%%%%%%%%%%%%%%%%%%%%%%%%%%%%%%%%%%%%%%%%%%%%%%%%%%%%%%%%%%%%%%%%%%%%%%%%%%%%%%%%%%%%%%%%%%%%%%%%
\begin{rem}\label{reduced-rel}
Simple loops $\alpha$, $\beta$ and $\gamma$ of the relation~(3) in Theorem~\ref{main-2} can be reduced to the form as shown in Figure~\ref{rel-3}.
In fact, we used only this reduced relation as the relation~(3), in the proof of Theorem~\ref{main-2}.
We do not know whether the relation~(3) in Theorem~\ref{main-2} can be obtained from the relations~(1) and (2) there or not.
%%%%%%%%%%%%%%%%%%%%%%%%%%%%%%%%%%%%%%%%%%%%%%%%%%%%%%%%%%%%%%%%%%%%%%%%%%%%%%%%%%%%%%%%%%%%%%%%%%%%
\begin{figure}[htbp]
\includegraphics[scale=0.5]{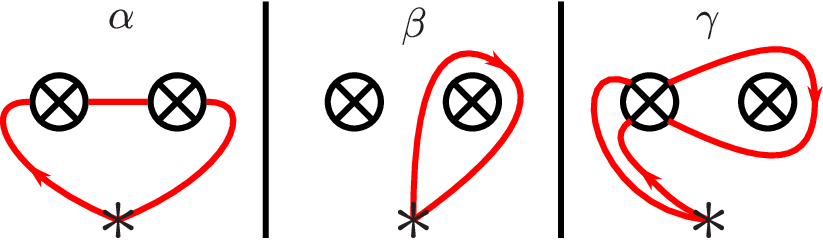}
\caption{The reduced relation $S_{\alpha}S_{\beta}S_{\alpha}^{-1}=S_\gamma$.}\label{rel-3}
\end{figure}
%%%%%%%%%%%%%%%%%%%%%%%%%%%%%%%%%%%%%%%%%%%%%%%%%%%%%%%%%%%%%%%%%%%%%%%%%%%%%%%%%%%%%%%%%%%%%%%%%%%%
\end{rem}
%%%%%%%%%%%%%%%%%%%%%%%%%%%%%%%%%%%%%%%%%%%%%%%%%%%%%%%%%%%%%%%%%%%%%%%%%%%%%%%%%%%%%%%%%%%%%%%%%%%%
%%%%%%%%%%%%%%%%%%%%%%%%%%%%%%%%%%%%%%%%%%%%%%%%%%%%%%%%%%%%%%%%%%%%%%%%%%%%%%%%%%%%%%%%%%%%%%%%%%%%

%%%%%%%%%%%%%%%%%%%%%%%%%%%%%%%%%%%%%%%%%%%%%%%%%%%%%%%%%%%%%%%%%%%%%%%%%%%%%%%%%%%%%%%%%%%%%%%%%%%%
%%%%%%%%%%%%%%%%%%%%%%%%%%%%%%%%%%%%%%%%%%%%%%%%%%%%%%%%%%%%%%%%%%%%%%%%%%%%%%%%%%%%%%%%%%%%%%%%%%%%
%%%%%%%%%%%%%%%%%%%%%%%%%%%%%%%%%%%%%%%%%%%%%%%%%%%%%%%%%%%%%%%%%%%%%%%%%%%%%%%%%%%%%%%%%%%%%%%%%%%%
%%%%%%%%%%%%%%%%%%%%%%%%%%%%%%%%%%%%%%%%%%%%%%%%%%%%%%%%%%%%%%%%%%%%%%%%%%%%%%%%%%%%%%%%%%%%%%%%%%%%
%%%%%%%%%%%%%%%%%%%%%%%%%%%%%%%%%%%%%%%%%%%%%%%%%%%%%%%%%%%%%%%%%%%%%%%%%%%%%%%%%%%%%%%%%%%%%%%%%%%%
%%%%%%%%%%%%%%%%%%%%%%%%%%%%%%%%%%%%%%%%%%%%%%%%%%%%%%%%%%%%%%%%%%%%%%%%%%%%%%%%%%%%%%%%%%%%%%%%%%%%
%%%%%%%%%%%%%%%%%%%%%%%%%%%%%%%%%%%%%%%%%%%%%%%%%%%%%%%%%%%%%%%%%%%%%%%%%%%%%%%%%%%%%%%%%%%%%%%%%%%%
%%%%%%%%%%%%%%%%%%%%%%%%%%%%%%%%%%%%%%%%%%%%%%%%%%%%%%%%%%%%%%%%%%%%%%%%%%%%%%%%%%%%%%%%%%%%%%%%%%%%
%%%%%%%%%%%%%%%%%%%%%%%%%%%%%%%%%%%%%%%%%%%%%%%%%%%%%%%%%%%%%%%%%%%%%%%%%%%%%%%%%%%%%%%%%%%%%%%%%%%%
%%%%%%%%%%%%%%%%%%%%%%%%%%%%%%%%%%%%%%%%%%%%%%%%%%%%%%%%%%%%%%%%%%%%%%%%%%%%%%%%%%%%%%%%%%%%%%%%%%%%
\section*{Acknowledgement}

The author would like to express his thanks to Andrew Putman and Masatoshi Sato for their useful comments.

%%%%%%%%%%%%%%%%%%%%%%%%%%%%%%%%%%%%%%%%%%%%%%%%%%%%%%%%%%%%%%%%%%%%%%%%%%%%%%%%%%%%%%%%%%%%%%%%%%%%
%%%%%%%%%%%%%%%%%%%%%%%%%%%%%%%%%%%%%%%%%%%%%%%%%%%%%%%%%%%%%%%%%%%%%%%%%%%%%%%%%%%%%%%%%%%%%%%%%%%%
%%%%%%%%%%%%%%%%%%%%%%%%%%%%%%%%%%%%%%%%%%%%%%%%%%%%%%%%%%%%%%%%%%%%%%%%%%%%%%%%%%%%%%%%%%%%%%%%%%%%
%%%%%%%%%%%%%%%%%%%%%%%%%%%%%%%%%%%%%%%%%%%%%%%%%%%%%%%%%%%%%%%%%%%%%%%%%%%%%%%%%%%%%%%%%%%%%%%%%%%%
%%%%%%%%%%%%%%%%%%%%%%%%%%%%%%%%%%%%%%%%%%%%%%%%%%%%%%%%%%%%%%%%%%%%%%%%%%%%%%%%%%%%%%%%%%%%%%%%%%%%

\end{document}